\numberwithin{equation}{subsection}
\newcommand{\sqsp}{\renewcommand{\baselinestretch}{1.17}\tiny\normalsize}
\newtheorem{theorem}[subsection]{Theorem}
\newtheorem{proposition}[subsection]{Proposition}
\newtheorem{corollary}[subsection]{Corollary}
\newtheorem{twist}[subsection]{Twisting Principles}
\theoremstyle{definition}
\newtheorem{definition}[subsection]{Definition}
\newtheorem{example}[subsection]{Example}
\newtheorem{remark}[subsection]{Remark}
\newcommand{\fg}{\mathfrak{g}}
\newcommand{\sltwo}{\mathfrak{sl}_2}
\newcommand{\sln}{\mathfrak{sl}_n}
\newcommand{\slnplusone}{\mathfrak{sl}_{n+1}}
\newcommand{\bC}{\mathbf{C}}
\newcommand{\bk}{\mathbf{k}}
\newcommand{\bZ}{\mathbf{Z}}
\newcommand{\al}{{\alpha_\lambda}}
\newcommand{\mualpha}{\mu_\alpha}
\newcommand{\rhoalpha}{\rho_\alpha}
\newcommand{\qp}{\mathbb{A}^{2|0}_q}
\newcommand{\qpalpha}{\mathbb{A}^{2|0}_{q,\alpha}}
\newcommand{\qn}{\mathbb{A}^{n|0}_q}
\newcommand{\qnalpha}{\mathbb{A}^{n|0}_{q,\alpha}}
\newcommand{\qfn}{\mathbb{A}^{0|n}_q}
\newcommand{\qfnalpha}{\mathbb{A}^{0|n}_{q,\alpha}}
\newcommand{\uhg}{U_h(\fg)}
\newcommand{\uq}{U_q(\sltwo)}
\newcommand{\uqalpha}{\uq_\alpha}
\newcommand{\uqbeta}{\uq_\beta}
\newcommand{\uqg}{U_q(\fg)}
\newcommand{\uqsln}{U_q(\sln)}
\newcommand{\mqeta}{M_q(\eta)}
\newcommand{\ch}{\bC[[h]]}
\newcommand{\ax}{\alpha_\xi}
\newcommand{\ir}{{\begin{bmatrix}n \\ r\end{bmatrix}_q}}
\newcommand{\dqx}{\partial_{q,x}}
\newcommand{\dqy}{\partial_{q,y}}
\newcommand{\sigmax}{\sigma_x}
\newcommand{\sigmay}{\sigma_y}
\newcommand{\rk}{R^{\alpha^k}}
\newcommand{\nicearrow}{\SelectTips{cm}{10}}
\DeclareMathOperator{\Hom}{Hom}
\DeclareMathOperator{\diag}{diag}
\begin{document}

\title[Hom-quantum groups III]{Hom-quantum groups III: Representations and module Hom-algebras}
\author{Donald Yau}

\begin{abstract}
We study Hom-quantum groups, their representations, and module Hom-algebras.  Two Twisting Principles for Hom-type algebras are formulated, and construction results are proved following these Twisting Principles.  Examples include Hom-quantum $n$-spaces, Hom-quantum enveloping algebras of Kac-Moody algebras, Hom-Verma modules, and Hom-type analogs of $\uq$-module-algebra structures on the quantum planes.
\end{abstract}

\keywords{Twisting Principles, Hom-quantum group, module Hom-bialgebra, Hom-quantum geometry}

\subjclass[2000]{16W30, 17A30, 17B37, 17B62, 81R50}

\address{Department of Mathematics\\
    The Ohio State University at Newark\\
    1179 University Drive\\
    Newark, OH 43055, USA}
\email{dyau@math.ohio-state.edu}

\date{\today}
\maketitle

\sqsp

\section{Introduction}

This paper is part of an on-going effort \cite{yau6} - \cite{yau10} to study twisted, Hom-type generalizations of quantum groups, the Yang-Baxter equations (YBEs) \cite{baxter,baxter2,perk,skl1,skl2,yang}, and related algebraic structures.  We use the name \emph{Hom-quantum groups} colloquially to refer to Hom-type generalizations of quantum groups, i.e., bialgebras, possibly with additional structures.  Roughly speaking, quantum group theory involves the study of bialgebras and Hopf algebras that are not-necessarily commutative or cocommutative.  Hom-quantum groups extend quantum groups by introducing non-(co)associativity, and the level of non-(co)associativity is controlled by a certain twisting map $\alpha$ ($=$ the ``Hom" in Hom-quantum groups).  For example, the Hom-quantum enveloping algebras (Examples \ref{ex:uqg} and \ref{ex:kacmoody}) and the FRT Hom-quantum groups (section 4 in \cite{yau10}) are all simultaneously non-associative, non-coassociative, non-commutative, and non-cocommutative.

Some important properties of quantum groups have been shown to have counterparts in the setting of Hom-quantum groups.  For example, it is well-known that each module over a quasi-triangular bialgebra \cite{dri87} has a canonical solution of the YBE.  In the world of Hom-quantum groups, for a quasi-triangular Hom-bialgebra whose Hom-braiding element is $\alpha$-invariant, there is a canonical solution of the Hom-Yang-Baxter equation (HYBE) \cite{yau6,yau7} associated to each module \cite{yau9}.  There is a similar statement for the dual objects of comodules over suitable cobraided Hom-bialgebras \cite{yau10}.  Likewise, quantum geometry (in particular, quantum group coactions on the quantum planes) has been generalized to what we call Hom-quantum geometry, which involves Hom-quantum group coactions on the Hom-quantum planes (section 7 in \cite{yau10}).

The purpose of this paper is to advance the study of Hom-quantum groups (that is, Hom-bialgebras, possibly with additional structures) and their representations, as initiated in \cite{yau9,yau10}.  We also consider module Hom-algebras and Hom-quantum geometry on the Hom-quantum planes.  This paper subsumes the earlier preprint \cite{yau4}.  Below is a description of the rest of this paper.

In section \ref{sec:homalgebras} we formulate the Twisting Principles \ref{twistingprinciple} for Hom-type algebras, which we will use as a guide throughout the rest of this paper.  The first Twisting Principle says that an ordinary algebraic structure can be twisted into a corresponding type of Hom-algebraic structure via suitable endomorphisms.  The first example of the first Twisting Principle was considered by the author in \cite{yau2}.  It is by now ubiquitous in the study of Hom-type objects; see the discussion after the Twisting Principles.  The second Twisting Principle says that a Hom-type algebra, without any additional data, can be twisted along its own twisting map to give a derived Hom-type algebra.  Moreover, the second Twisting Principle can often be applied repeatedly, so a Hom-type algebra gives rise to a derived sequence of Hom-type algebras.  We prove instances of the Twisting Principles \ref{twistingprinciple} for Hom-(co)associative (co)algebras and Hom-bialgebras (Theorems \ref{thm:hombialg} and \ref{thm:secondtp}).  As examples, we construct multi-parameter classes of (fermionic) Hom-quantum $n$-spaces (Examples \ref{ex:hqspace} and \ref{ex:fhqspace}) and of Hom-quantum enveloping algebras of complex semisimple Lie algebras and of Kac-Moody algebras (Examples  \ref{ex:uqg} and \ref{ex:kacmoody}).  These Hom-quantum enveloping algebras are all simultaneously non-associative, non-coassociative, non-commutative, and non-cocommutative.

In section \ref{sec:qthb} we consider quasi-triangular and cobraided Hom-bialgebras.   These Hom-quantum groups were introduced in \cite{yau9,yau10}, where a version of the first Twisting Principle was proved in each case.  Quasi-triangular and cobraided Hom-bialgebras are the Hom-type analogs of Drinfel'd's quasi-triangular bialgebras \cite{dri87} and of cobraided bialgebras \cite{hay,lt,majid91,sch}.  As shown in \cite{yau9,yau10}, modules over quasi-triangular Hom-bialgebras and comodules over cobraided Hom-bialgebras are related to solutions of the HYBE \cite{yau6,yau7}.  We establish the second Twisting Principle for quasi-triangular and cobraided Hom-bialgebras (Theorems \ref{thm:qtsecond} and \ref{thm:cobraidedsecond}).  In particular, a quasi-triangular Hom-bialgebra gives rise to a derived sequence (a double-sequence if the twisting map is surjective) of quasi-triangular Hom-bialgebras.  Likewise, a cobraided Hom-bialgebra gives rise to a derived sequence (a double-sequence if the twisting map is injective) of cobraided Hom-bialgebras.

In section \ref{sec:modules} we study modules over Hom-(co)associative (co)algebras.  We establish the second Twisting Principle \ref{twistingprinciple} for modules over a Hom-associative algebra (Corollary \ref{cor:twisthommodule}).  It says that each module over a Hom-associative algebra gives rise to a derived double-sequence of modules over derived Hom-associative algebras.   For the first Twisting Principle, we show that given a module over an associative algebra, each suitable pair of morphisms gives rise to a derived double-sequence of modules over Hom-associative algebras (Theorem \ref{thm:twistmodule}).  As examples, we construct multi-parameter classes of finite-dimensional modules over the Hom-quantum enveloping algebras $\uqsln_\alpha$ of $\sln$ (Examples \ref{ex:uqmodule} and \ref{ex:slnmodule}).  We also construct a multi-parameter class of infinite-dimensional Hom-Verma modules over the Hom-quantum enveloping algebra $\uq_\alpha$ of $\sltwo$ (Example \ref{ex:verma}).  The Twisting Principles for comodules over Hom-coassociative coalgebras are stated at the end of section \ref{sec:modules} (Theorems \ref{thm:twisthomco} and \ref{thm:twistco}).

In section \ref{sec:mha} we study module Hom-algebras, which are the Hom-type analogs of module-algebras.  If $H$ is a Hom-bialgebra, then an $H$-module Hom-algebra is a Hom-associative algebra $A$ together with an $H$-module structure, satisfying the module Hom-algebra axiom \eqref{modhomalg}.  We give an alternative characterization of the module Hom-algebra axiom in terms of the multiplication on $A$ (Theorem \ref{thm:mhachar}).  Then we establish the Twisting Principles for module Hom-algebras (Theorems \ref{thm:twistmha} and \ref{thm:twistma}).  We use the results on module Hom-algebras to study Hom-quantum geometry on the Hom-quantum planes.  In Example \ref{ex:hqg} we construct a multi-parameter family of $\uqalpha$-module Hom-algebra structures on the Hom-quantum planes, generalizing the well-known $\uq$-module-algebra structure on the quantum plane \cite{msmith} that is defined in terms of the quantum partial derivatives \eqref{qpartial}.  In Example \ref{ex2:hqg} we construct Hom-type analogs of a non-standard $\uq$-module-algebra structure on the quantum plane \cite{ds}.

\section{The Twisting Principles and Hom-quantum groups}
\label{sec:homalgebras}

In this section, we construct some specific examples of Hom-quantum groups.  Although our main interest is in Hom-quantum groups, our construction results work more generally.  We state the Twisting Principles \ref{twistingprinciple}, which we use as a guide throughout the rest of this paper.  As examples of the Twisting Principles, we construct classes of Hom-(co)associative (co)algebras and Hom-bialgebras (Theorems \ref{thm:hombialg} and \ref{thm:secondtp}).

To illustrate these results, we provide several examples related to the quantum $n$-spaces and the quantum enveloping algebras.  In Example \ref{ex:hqspace}, we construct a multi-parameter class of Hom-quantum $n$-spaces $\qnalpha$, which are the Hom-type analogs of the quantum $n$-spaces $\qn$.  We also consider the fermionic version of the Hom-quantum $n$-spaces (Example \ref{ex:fhqspace}).  Next, we construct a multi-parameter, uncountable family of Hom-bialgebra deformations of the Drinfel'd-Jimbo quantum group $\uqg$ (Example \ref{ex:uqg}), where $\fg$ is any complex semisimple Lie algebra of type $A$, $D$, or $E$.  More generally, we construct a multi-parameter, uncountable family of Hom-bialgebra deformations of the quantum Kac-Moody algebra $\uhg$, where $\fg$ is any symmetrizable Kac-Moody algebra (Example \ref{ex:kacmoody}).

\subsection{Conventions and notations}

We work over a fixed associative and commutative ring $\bk$ of characteristic $0$.  Modules, tensor products, and linear maps are all taken over $\bk$.  If $V$ and $W$ are $\bk$-modules, then $\tau = \tau_{V,W} \colon V \otimes W \to W \otimes V$ denotes the twist isomorphism, $\tau(v \otimes w) = w \otimes v$.

Given a bilinear map $\mu \colon V^{\otimes 2} \to V$ and elements $x,y \in V$, we often write $\mu(x,y)$ as $xy$.  For a map $\Delta \colon V \to V^{\otimes 2}$, we use Sweedler's notation \cite{sweedler} for comultiplication: $\Delta(x) = \sum_{(x)} x_1 \otimes x_2$.

If $\alpha \colon V \to V$ is a self-map of a module $V$ and $n \geq 0$, then $\alpha^n$ denotes the $n$-fold composition $\alpha \circ \cdots \circ \alpha$ of copies of $\alpha$, with $\alpha^0 \equiv Id_V$.

\subsection{A brief history of Hom-type algebras}
Hom-type algebras first appeared in the form of Hom-Lie algebras \cite{hls}, which satisfy an $\alpha$-twisted version of the Jacobi identity.  Hom-Lie algebras are closely related to deformed vector fields \cite{ama,hls,ls,ls2,ls3,rs,ss} and number theory \cite{larsson}.  Hom-associative algebras were introduced in \cite{ms} to construct Hom-Lie algebras using the commutator bracket.  The universal Hom-associative algebra of a Hom-Lie algebra was studied in \cite{yau}.  It was further shown in \cite{yau3} that a unital version of the universal Hom-associative algebra has the structure of a Hom-bialgebra.  Variations of Hom-Lie and Hom-associative algebras were studied in \cite{fg}.  Some classification results about Hom-associative algebras can be found in \cite{fg2,gohr}.  The authors of \cite{cg} constructed Hom-(co)associative (co)algebras and Hom-bialgebras with bijective twisting maps as (co)algebras and bialgebras in a certain tensor category.  Other papers about Hom-type structures are \cite{ams}, \cite{hb}, \cite{jl}, \cite{mak} - \cite{ms4}, \cite{yau2}, and \cite{yau4} - \cite{yau10}.

\begin{definition}
\label{def:homas}
\begin{enumerate}
\item
A \textbf{Hom-associative algebra} \cite{ms} $(A,\mu,\alpha)$ consists of a  $\bk$-module $A$, a bilinear map $\mu \colon A^{\otimes 2} \to A$ (the multiplication), and a linear self-map $\alpha \colon A \to A$ (the twisting map) such that:
\begin{equation}
\label{homassaxioms}
\begin{split}
\alpha \circ \mu &= \mu \circ \alpha^{\otimes 2} \quad \text{(multiplicativity)},\\
\mu \circ (\alpha \otimes \mu) &= \mu \circ (\mu \otimes \alpha) \quad \text{(Hom-associativity)}.
\end{split}
\end{equation}
A morphism of Hom-associative algebras is a linear map of the underlying $\bk$-modules that commutes with the twisting maps and the multiplications.
\item
A \textbf{Hom-coassociative coalgebra} \cite{ms2,ms4} $(C,\Delta,\alpha)$ consists of a $\bk$-module $C$, a linear map $\Delta \colon C \to C^{\otimes 2}$ (the comultiplication), and a linear self-map $\alpha \colon C \to C$ (the twisting map) such that:
\begin{equation}
\label{homcoassaxioms}
\begin{split}
\alpha^{\otimes 2} \circ \Delta &= \Delta \circ \alpha \quad \text{(comultiplicativity)},\\
(\alpha \otimes \Delta) \circ \Delta &= (\Delta \otimes \alpha) \circ \Delta \quad \text{(Hom-coassociativity)}.
\end{split}
\end{equation}
A morphism of Hom-coassociative coalgebras is a linear map of the underlying $\bk$-modules that commutes with the twisting maps and the comultiplications.
\item
A \textbf{Hom-bialgebra} \cite{ms2,yau3} is a quadruple $(H,\mu,\Delta,\alpha)$ in which $(H,\mu,\alpha)$ is a Hom-associative algebra, $(H,\Delta,\alpha)$ is a Hom-coassociative coalgebra, and the following condition holds:
\begin{equation}
\label{def:hombi}
\Delta \circ \mu = \mu^{\otimes 2} \circ (Id \otimes \tau \otimes Id) \circ \Delta^{\otimes 2}.
\end{equation}
A morphism of Hom-bialgebras is a linear map of the underlying $\bk$-modules that commutes with the twisting maps, the multiplications, and the comultiplications.
\end{enumerate}
\end{definition}

In terms of elements, \eqref{def:hombi} means that
\[
\Delta(ab) = \sum_{(a)(b)} a_1b_1 \otimes a_2b_2
\]
for all $a, b \in H$.  This compatibility condition also holds in any bialgebra.  When there is no danger of confusion, we will denote a Hom-associative algebra $(A,\mu,\alpha)$ simply by $A$.  The same remark applies to other Hom-type objects.

\begin{remark}
In our definitions of a Hom-(co)associative (co)algebra and of a Hom-bialgebra, we require that the twisting map $\alpha$ be (co)multiplicative.  The original definition of a Hom-associative algebra in \cite{ms} does not have this multiplicativity assumption.  We require multiplicativity in our Hom-type algebras because in all of our examples, the twisting maps are already multiplicative.  Also the (co)multiplicativity condition is necessary for the second Twisting Principle for Hom-(co)associative (co)algebras and Hom-bialgebras (Theorem \ref{thm:secondtp}).
\end{remark}

Observe that a Hom-bialgebra is neither associative nor coassociative, unless of course $\alpha = Id$, in which case we have a bialgebra.  Instead of (co)associativity, in a Hom-bialgebra we have Hom-(co)associativity, $\mu \circ (\alpha \otimes \mu) = \mu \circ (\mu \otimes \alpha)$ and $(\alpha \otimes \Delta) \circ \Delta = (\Delta \otimes \alpha) \circ \Delta$.  So, roughly speaking, the degree of non-(co)associativity in a Hom-bialgebra is measured by how far the twisting map $\alpha$ deviates from the identity map.

One can think of a Hom-type algebra as an algebraic structure equipped with a twisting map $\alpha$, satisfying an $\alpha$-twisted version of the usual axioms.  Then it makes sense that an ordinary algebra can be twisted into a Hom-type algebra via a suitable self-map.  Moreover, one should be able to twist a Hom-type algebra along its own twisting map to obtain another Hom-type algebra.  We state these ideas as follows.

\begin{twist}
\label{twistingprinciple}
\begin{enumerate}
\item
If $A$ is an algebraic structure and $\alpha$ is a suitable self-map of $A$, then one obtains a corresponding type of Hom-algebraic structure $A_\alpha$ by twisting the structure maps of $A$ by $\alpha$.
\item
If $(A,\alpha)$ is a Hom-algebraic structure, then there is a derived Hom-algebraic structure $A^1$, which is obtained from $A$ by twisting its structure maps by $\alpha$.
\end{enumerate}
\end{twist}

We refer to these statements as the first and the second Twisting Principles. In practice, one has to be careful about the definition of a ``self-map" and about how and which parts of the structures of $A$ are twisted by $\alpha$, especially in the second Twisting Principle.  We will make precise both Twisting Principles for various algebraic structures.  The Twisting Principles are our main tool in constructing examples of Hom-type objects.

The first Twisting Principle allows one to construct many examples of Hom-type objects from ordinary ones and suitable endomorphisms.  It is a standard tool in the study of Hom-type objects.  In fact, most concrete examples of Hom-type algebras in the literature are constructed using the first Twisting Principle.  It was introduced by the author in \cite{yau2} (Theorem 2.3), where it is shown, in particular, that associative and Lie algebras can be twisted into Hom-associative and Hom-Lie algebras via algebra endomorphisms.  It has since been employed by various authors.  See, for example, \cite{ama} (Theorem 2.7), \cite{ams} (Theorems 1.7 and 2.6), \cite{fg2} (Section 2), \cite{gohr} (Proposition 1), \cite{mak} (Theorems 2.1 and 3.5), \cite{ms4} (Theorem 3.15 and Proposition 3.30), and \cite{yau3} - \cite{yau10}.

Unlike the first Twisting Principle, the second Twisting Principle says that a Hom-type structure can be twisted into another Hom-type structure without any additional data.  Moreover, it can be iterated ad infinitum.  In fact, it can be applied to the derived Hom-type object $A^1$ to obtain a second derived Hom-type object $A^2$, and so forth.  We will see several explicit cases of the second Twisting Principle later in this paper.

The following result illustrates the first Twisting Principle for (co)associative algebra and bialgebras.

\begin{theorem}[\cite{ms4,yau2,yau3}]
\label{thm:hombialg}
Let $(A,\mu)$ be an associative algebra, $(C,\Delta)$ be a coassociative coalgebra, and $(H,\mu,\Delta)$ be a bialgebra.
\begin{enumerate}
\item
If $\alpha \colon A \to A$ is an algebra morphism, then $A_\alpha = (A,\mu_\alpha,\alpha)$ is a Hom-associative algebra, where $\mu_\alpha = \alpha \circ \mu$.
\item
If $\alpha \colon C \to C$ is a coalgebra morphism, then $C_\alpha = (C,\Delta_\alpha,\alpha)$ is a Hom-coassociative coalgebra, where $\Delta_\alpha = \Delta \circ \alpha$.
\item
If $\alpha \colon H \to H$ is a bialgebra morphism, then $H_\alpha = (H,\mu_\alpha,\Delta_\alpha,\alpha)$ is a Hom-bialgebra, where $\mu_\alpha = \alpha \circ \mu$ and $\Delta_\alpha = \Delta \circ \alpha$.
\end{enumerate}
\end{theorem}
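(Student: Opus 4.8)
The plan is to verify directly that each of the three twisted objects satisfies the defining axioms of its Hom-type structure from Definition~\ref{def:homas}, exploiting repeatedly that $\alpha$ is a morphism of the relevant ordinary structure. The guiding mechanism is uniform: twisting by $\alpha$ inserts an extra copy of $\alpha$ into each structure map, and because $\alpha$ commutes with $\mu$ (resp.\ with $\Delta$), these inserted copies can be slid outward and collected, at which point the Hom-axiom collapses onto the corresponding ordinary axiom. Parts (1) and (2) are formally dual, so I would prove (1) in full and obtain (2) by reversing all arrows; part (3) then combines (1) and (2) and requires one genuinely new verification, the bialgebra compatibility, which I expect to be the only nontrivial step.

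For part (1), multiplicativity $\alpha \circ \mu_\alpha = \mu_\alpha \circ \alpha^{\otimes 2}$ follows immediately: since $\mu_\alpha = \alpha \circ \mu$ and $\alpha \circ \mu = \mu \circ \alpha^{\otimes 2}$ (as $\alpha$ is an algebra morphism), one has $\alpha \circ \mu_\alpha = \alpha \circ (\alpha \circ \mu) = \alpha \circ \mu \circ \alpha^{\otimes 2} = \mu_\alpha \circ \alpha^{\otimes 2}$. For Hom-associativity I would compute on elements: writing $\mu_\alpha(x,y) = \alpha(xy)$, the left side $\mu_\alpha(\alpha(x), \mu_\alpha(y,z))$ equals $\alpha\bigl(\alpha(x)\,\alpha(yz)\bigr) = \alpha^2\bigl(x(yz)\bigr)$, using multiplicativity of $\alpha$ twice, while the right side $\mu_\alpha(\mu_\alpha(x,y), \alpha(z))$ equals $\alpha^2\bigl((xy)z\bigr)$ by the same reasoning. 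Ordinary associativity of $\mu$ then equates the two. Part (2) follows by the dual computation: comultiplicativity is the arrow-reversal of multiplicativity, and Hom-coassociativity reduces, after extracting two copies of $\alpha^{\otimes 2}$ via $\Delta \circ \alpha = \alpha^{\otimes 2} \circ \Delta$, to ordinary coassociativity of $\Delta$.

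For part (3), the Hom-associative and Hom-coassociative axioms for $H_\alpha$ are already supplied by (1) and (2), since a bialgebra morphism is in particular both an algebra and a coalgebra morphism. The remaining point is the compatibility condition \eqref{def:hombi} for the twisted maps, which I would verify on elements $a, b \in H$. Using $\Delta \circ \alpha = \alpha^{\otimes 2} \circ \Delta$, the twisted comultiplication reads $\Delta_\alpha(a) = \sum_{(a)} \alpha(a_1) \otimes \alpha(a_2)$; feeding two such expressions through $(Id \otimes \tau \otimes Id)$ and then through $\mu_\alpha^{\otimes 2}$, and applying multiplicativity of $\alpha$ inside each factor, produces $\sum \alpha^2(a_1 b_1) \otimes \alpha^2(a_2 b_2)$. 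On the other side, $\Delta_\alpha(\mu_\alpha(a,b)) = \Delta(\alpha^2(ab)) = (\alpha^2 \otimes \alpha^2)\Delta(ab)$, which by the ordinary bialgebra compatibility equals the same $\sum \alpha^2(a_1 b_1) \otimes \alpha^2(a_2 b_2)$.

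The main obstacle is exactly this last compatibility check in (3): it is the only axiom that intertwines $\mu$ and $\Delta$, so it cannot be reduced to a single ordinary axiom by pure formalism the way the others can. The calculation nevertheless goes through because both sides accumulate precisely two copies of $\alpha$ on each tensor factor before the ordinary compatibility is invoked; the real content is the bookkeeping confirming that the twisting is symmetric enough for these counts to agree. Once this is confirmed, all defining axioms hold, and the three objects are Hom-associative, Hom-coassociative, and Hom-bialgebra structures respectively.
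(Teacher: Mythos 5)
Your proposal is correct and follows essentially the same route as the paper: each Hom-axiom is identified as $\alpha^2$ (respectively $\alpha^2 \otimes \alpha^2$ for the bialgebra compatibility) applied to the corresponding ordinary axiom, with part (2) obtained by dualizing part (1). The only difference is presentational — you work on elements where the paper states the identities in element-free form — so no further comment is needed.
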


\begin{proof}
As mentioned above, it was proved in \cite{yau2} (Theorem 2.3) that $A_\alpha = (A,\mu_\alpha,\alpha)$ is a Hom-associative algebra.  Indeed, the Hom-associativity axiom $\mu_\alpha \circ (\alpha \otimes \mu_\alpha) = \mu_\alpha \circ (\mu_\alpha \otimes \alpha)$ is equal to $\alpha^2$ applied to the associativity axiom of $\mu$.  Likewise, both sides of the multiplicativity axiom $\alpha \circ \mu_\alpha = \mu_\alpha \circ \alpha^{\otimes 2}$ are equal to $\alpha^2 \circ \mu$.  Dualizing the previous arguments \cite{ms4}, one checks that $C_\alpha = (C,\Delta_\alpha,\alpha)$ is a Hom-coassociative coalgebra.  For the last assertion, the compatibility condition \eqref{def:hombi} for $\Delta_\alpha = \Delta \circ \alpha$ and $\mu_\alpha = \alpha \circ \mu$ is equal to $\alpha^2 \otimes \alpha^2$ followed by \eqref{def:hombi}, which holds in any bialgebra.
\end{proof}

Using Theorem \ref{thm:hombialg} we now discuss the (fermionic) Hom-quantum $n$-spaces and the Hom-quantum enveloping algebras of semisimple Lie algebras and, more generally, Kac-Moody algebras.

\begin{example}[\textbf{Hom-quantum $n$-spaces}]
\label{ex:hqspace}
In this example, we construct Hom versions of quantum $n$-spaces, which are themselves quantum analogs of affine spaces.  Fix a non-zero scalar $q \in \bk \setminus \{\pm 1\}$ and an extended integer $n \in \{2,3,\ldots,\infty\}$.  Suppose $x_i$ for $1 \leq i \leq n$ (or $x_i$ for all $i \geq 1$ if $n=\infty$) are independent variables.  Let $\bk\{x_r\}_{r=1}^n$ be the unital associative algebra generated by these variables.  The \textbf{quantum $n$-space} is defined as the quotient
\[
\qn = \frac{\bk\{x_r\}_{r=1}^n}{(x_jx_i - qx_ix_j \text{ for $i<j$})}.
\]
The relation
\[
x_jx_i = qx_ix_j \quad\text{for $i<j$}
\]
is called the \emph{quantum commutation relation}.  In particular, $\qn$ is non-commutative.  When $n=2$, we use $x=x_1$ and $y=x_2$, and call
\[
\qp = \frac{\bk\{x,y\}}{(yx - qxy)}
\]
the \textbf{quantum plane}.  We will apply Theorem \ref{thm:hombialg} to the quantum $n$-spaces with the following algebra morphisms.

Suppose $f \colon \{1,\ldots,n\} \to \bZ$ (or $f \colon \{1,2,\ldots\} \to \bZ$ if $n=\infty$) is an order-preserving function, i.e., $f(i) < f(j)$ whenever $i<j$.  For each $m \in Im(f) \cap \{1,\ldots,n\}$, pick a scalar $\lambda_m \in \bk$.  Given this data, define a map $\alpha \colon \qn \to \qn$ by setting
\begin{equation}
\label{alphax}
\alpha(x_i) =
\begin{cases}
\lambda_{f(i)}x_{f(i)} & \text{if $1 \leq f(i) \leq n$},\\
0 & \text{otherwise}.
\end{cases}
\end{equation}
for $1 \leq i \leq n$.  Since $f$ is order-preserving, the map $\alpha$ preserves the quantum commutation relation whenever $i < j$, so $\alpha$ is a well-defined algebra morphism.

By Theorem \ref{thm:hombialg} we have a Hom-associative algebra
\begin{equation}
\label{hqnspace}
\qnalpha = (\qn,\mualpha,\alpha),
\end{equation}
where $\mualpha = \alpha \circ \mu$ with $\mu$ the multiplication in $\qn$. We refer to the Hom-associative algebras $\qnalpha$ as the \textbf{Hom-quantum $n$-spaces} and $\qpalpha$ as the \textbf{Hom-quantum planes}. Since $\alpha$ depends on the scalars $\lambda_m$ and the function $f$, we have thus constructed a multi-parameter family $\{\qnalpha\}$ of Hom-quantum $n$-spaces.  We recover the quantum $n$-space $\qn$ by taking $f(i) = i$ for all $i$ and $\lambda_m = 1$ for all $m$.  We will revisit the Hom-quantum planes in Examples \ref{ex:hqg} and \ref{ex2:hqg} when we discuss their Hom-quantum geometry.
\qed
\end{example}

\begin{example}[\textbf{Fermionic Hom-quantum $n$-spaces}]
\label{ex:fhqspace}
In this example, we consider a fermionic version of the Hom-quantum $n$-spaces \eqref{hqnspace}.  We use the same notations as in Example \ref{ex:hqspace}.  The \textbf{fermionic quantum $n$-space} is defined as the unital associative algebra
\[
\qfn = \frac{\bk\{x_r\}_{r=1}^n}{(x_i^2 \text{ for all $i$},\, x_jx_i + qx_ix_j \text{ for $i<j$})}.
\]
The relation, $x_jx_i = -qx_ix_j$ for $i<j$, is called the \emph{fermionic quantum commutation relation}.  Unlike the quantum $n$-space $\qn$, the fermionic quantum $n$-space is finite-dimensional whenever $n<\infty$.

We can define an algebra morphism $\alpha \colon \qfn \to \qfn$ exactly as in \eqref{alphax}.  This map is well-defined because it clearly preserves the relations $x_i^2 = 0$ for all $i$ and the fermionic quantum commutation relation.  By Theorem \ref{thm:hombialg} we have a Hom-associative algebra
\[
\qfnalpha = (\qfn,\mualpha,\alpha),
\]
which we refer to as a \textbf{fermionic Hom-quantum $n$-space}.  The collection $\{\qfnalpha\}$ is a multi-parameter family of Hom-associative algebra deformations of the fermionic quantum $n$-space.  We recover the fermionic quantum $n$-space $\qfn$ by taking $f(i) = i$ for all $i$ and $\lambda_m = 1$ for all $m$.
\qed
\end{example}

\begin{example}[\textbf{Hom-quantum enveloping algebras of semisimple Lie algebras}]
\label{ex:uqg}
Fix an invertible scalar $q \in \bC$ with $q \not= \pm 1$.  Let $\fg$ be a complex semisimple Lie algebra of type $A_n$, $D_n$, $E_6$, $E_7$, or $E_8$.  Here we construct a multi-parameter, uncountable family of Hom-quantum groups $\uqg_\alpha$ from the quantum enveloping algebra $\uqg$ using Theorem \ref{thm:hombialg}.  The reader is referred to, e.g., \cite{hum,jac} for the classification of complex semisimple Lie algebras and discussions about Cartan matrices.  For example, the Lie algebra of type $A_n$ is $\slnplusone$, so in particular the case $A_1$ is $\sltwo$.

Let us first recall the bialgebra structure of the Drinfel'd-Jimbo quantum enveloping algebra $\uqg$ \cite{dri85,dri87,jimbo}.  The restriction on the type of $\fg$ implies that the Cartan matrix $(a_{ij})_{i,j=1}^n$ of $\fg$ is symmetric positive-definite with $a_{ii} = 2$ for all $i$ and $a_{ij} \in \{0,-1\}$ if $i\not= j$.  The quantum group $\uqg$ is generated as a unital associative algebra by $4n$ generators $\{E_i,F_i,K_i^{\pm 1}\}_{i=1}^n$ with relations:
\begin{equation}
\label{uqgrelations}
\begin{split}
K_iK_i^{-1} &= 1 = K_i^{-1}K_i,\quad [K_i,K_j] = 0,\\
K_iE_j &= q^{a_{ij}}E_jK_i,\quad K_iF_j = q^{-a_{ij}}F_jK_i,\quad
[E_i,F_j] = \delta_{ij}\frac{K_i - K_i^{-1}}{q - q^{-1}},\\
0 &= [E_i,E_j] = [F_i,F_j] \quad \text{if $a_{ij}=0$},\quad\text{and}\\
0 &= E_i^2E_j - [2]_qE_iE_jE_i + E_jE_i^2 =
F_i^2F_j - [2]_qF_iF_jF_i + F_jF_i^2
\end{split}
\end{equation}
if $a_{ij} = -1$.  Here the bracket is the commutator bracket, $[x,y] = xy - yx$, and $[2]_q = q + q^{-1}$.  The comultiplication on $\uqg$ is defined on the algebra generators by
\begin{equation}
\label{uqgcomult}
\begin{split}
\Delta(E_i) &= 1 \otimes E_i + E_i \otimes K_i, \\
\Delta(F_i) &= K_i^{-1} \otimes F_i + F_i \otimes 1,\\
\Delta(K_i) &= K_i \otimes K_i,\quad\text{and}\quad
\Delta(K_i^{-1}) = K_i^{-1} \otimes K_i^{-1}.
\end{split}
\end{equation}
For example, for $n \geq 1$ the Cartan matrix for $\slnplusone$, which is of type $A_n$, is $(a_{ij})_{i,j=1}^n$ with
\begin{equation}
\label{slcartan}
a_{ij} = \begin{cases}
2 & \text{if $i=j$},\\
-1 & \text{if $|i-j|=1$},\\
0 & \text{if $|i-j|>1$}.
\end{cases}
\end{equation}
In particular, when $n=1$ the Cartan matrix for $\sltwo$ has only one entry, namely, $a_{11}=2$.  In this case, the last two lines in \eqref{uqgrelations} are irrelevant.  So $\uq$ is generated as a unital associative algebra by $E$, $F$, $K$, and $K^{-1}$, satisfying the relations
\begin{equation}
\label{uqrelations}
\begin{split}
KK^{-1} &= 1 = K^{-1}K,\\
KE &= q^2EK,\quad KF = q^{-2}FK,\quad\text{and}\quad\\
EF - FE &= \frac{K - K^{-1}}{q - q^{-1}}.
\end{split}
\end{equation}

To use Theorem \ref{thm:hombialg} on the bialgebra $\uqg$, we need bialgebra morphisms on $\uqg$, which can be constructed as follows.  For each $i = 1, \ldots , n$, pick an invertible scalar $\lambda_i \in \bC$ and consider the map $\al \colon \uqg \to \uqg$ defined on the generators by
\begin{equation}
\label{alphalambdauqg}
\al(K_i^{\pm 1}) = K_i^{\pm 1},\quad
\al(E_i) = \lambda_i E_i,\quad\text{and}\quad
\al(F_i) = \lambda_i^{-1}F_i.
\end{equation}
It is immediate to check that $\al$ preserves all the relations in \eqref{uqgrelations} and \eqref{uqgcomult}, so $\al$ is a bialgebra automorphism on $\uqg$.  By Theorem \ref{thm:hombialg} there is a Hom-bialgebra
\begin{equation}
\label{uqgal}
\uqg_\al = (\uqg,\mu_\al,\Delta_\al,\al),
\end{equation}
where $\mu_\al = \al \circ \mu$ with $\mu$ the multiplication in $\uqg$ and $\Delta_\al = \Delta \circ \al$.  We refer to $\uqg_\al$ as a \textbf{Hom-quantum enveloping algebra} of $\fg$.   The collection $\{\uqg_\al \colon \lambda_1\cdots\lambda_n \not= 0\}$ is an $n$-parameter, uncountable family of Hom-bialgebra deformations of the quantum group $\uqg$.  We recover $\uqg$ by taking $\lambda_i = 1$ for all $i$.  When $\lambda_i \not= 1$ for at least one $i$ (i.e., $\al \not= Id$), the Hom-quantum enveloping algebra $\uqg_\al$ is simultaneously non-associative, non-coassociative, non-commutative, and non-cocommutative.

We will revisit the Hom-quantum enveloping algebras $\uqsln_\al$ in section \ref{sec:modules}, where we will construct Hom-type modules over them.  Moreover, in Example \ref{ex:hqg} we will discuss Hom-quantum geometry on the Hom-quantum planes $\qpalpha$ (Example \ref{ex:hqspace}) as they are act upon by the Hom-quantum enveloping algebras $\uq_\al$.
\qed
\end{example}

\begin{example}[\textbf{Hom-quantum Kac-Moody algebras}]
\label{ex:kacmoody}
In this example, we construct multi-parameter, uncountable classes of Hom-quantum enveloping algebras of symmetrizable Kac-Moody algebras.  The reader is referred to \cite{kac} for detailed discussions on Kac-Moody algebras and to \cite{dri85,dri87,jimbo} for the Drinfel'd-Jimbo quantum enveloping algebra $\uhg$.  Other expositions on $\uhg$ can be found in the books \cite{cp,es}.

Let $\fg$ be a symmetrizable Kac-Moody Lie algebra with Cartan matrix $A = (a_{ij})_{i,j=1}^n$.  Let us first recall the bialgebra structure on $\uhg$.  Let $D = \diag(d_1,\ldots,d_n)$ be the diagonal matrix whose entries are relatively prime non-negative integers such that $DA$ is symmetric.  Let $h$ be a formal variable and $\ch$ be the topological complex power series algebra over $h$ with the $h$-adic topology.  Set $q_i = e^{d_ih}$ for $1 \leq i \leq n$.  The quantum Kac-Moody algebra $\uhg$ is the topological unital associative algebra over $\ch$ generated by the $3n$ generators $\{H_i,X_i^{\pm}\}_{i=1}^n$ with relations:
\begin{equation}
\label{uhgrelations}
\begin{split}
[H_i,H_j] &= 0, \quad [H_i,X_j^{\pm}] = \pm a_{ij}X_j^{\pm},\quad
[X_i^+,X_j^-] = \delta_{ij}\frac{q_i^{H_i} - q_i^{-H_i}}{q_i - q_i^{-1}}, \quad\text{and}\\
0 &= \sum_{k=0}^{1-a_{ij}} (-1)^k\begin{bmatrix}1-a_{ij}\\ k\end{bmatrix}_{q_i} (X_i^{\pm})^k X_j^{\pm} (X_i^{\pm})^{1-a_{ij}-k} \quad \text{if $i \not= j$}.
\end{split}
\end{equation}
The $q_i$-binomial coefficient in \eqref{uhgrelations} is defined as follows:
\begin{equation}
\label{qinteger}
[n]_q = \frac{q^n - q^{-n}}{q - q^{-1}},\quad
[n]_q! = [1]_q[2]_q\cdots[n]_q,\quad\text{and}\quad
\ir = \frac{[n]_q!}{[r]_q![n-r]_q!}.
\end{equation}
The comultiplication on $\uhg$ is defined on the topological generators by
\begin{equation}
\label{uhgcomult}
\begin{split}
\Delta(H_i) &= H_i \otimes 1 + 1 \otimes H_i,\\
\Delta(X_i^+) &= 1 \otimes X_i^+ + X_i^+ \otimes q_i^{H_i},\quad\text{and}\\
\Delta(X_i^-) &= q_i^{-H_i} \otimes X_i^- + X_i^- \otimes 1.
\end{split}
\end{equation}

To use Theorem \ref{thm:hombialg} on the quantum Kac-Moody algebra $\uhg$, we proceed essentially as in the previous example.  For each $i = 1, \ldots , n$, pick an invertible element $p_i \in \ch$, i.e., $p_i$ is a complex power series in $h$ with non-zero constant term.  Consider the $\ch$-linear map $\alpha_p \colon \uhg \to \uhg$ defined on the generators by
\[
\alpha_p(H_i) = H_i, \quad \alpha_p(X_i^+) = p_i X_i^+, \quad\text{and}\quad \alpha_p(X_i^-) = p_i^{-1} X_i^-.
\]
It is easy to see that $\alpha_p$ preserves all the relations in \eqref{uhgrelations} and \eqref{uhgcomult}, so $\alpha_p$ is a bialgebra automorphism on $\uhg$.  By Theorem \ref{thm:hombialg}, there is a Hom-bialgebra
\[
\uhg_{\alpha_p} = (\uhg,\mu_\alpha,\Delta_\alpha,\alpha_p),
\]
where $\mu$ is the multiplication on $\uhg$, $\mu_\alpha = \alpha_p \circ \mu$, and $\Delta_\alpha = \Delta \circ \alpha_p$.  The collection $\{\uhg_{\alpha_p}\}$ is an $n$-parameter, uncountable family of Hom-bialgebra deformations of the quantum Kac-Moody algebra $\uhg$.  We recover $\uhg$ from $\uhg_{\alpha_p}$ when all $n$ parameters $\{p_i\}_{i=1}^n$ are equal to $1$.  When at least one $p_i \not= 1$, the Hom-bialgebra $\uhg_{\alpha_p}$ is simultaneously non-associative, non-coassociative, non-commutative, and non-cocommutative.
\qed
\end{example}

The following result is the second Twisting Principle \ref{twistingprinciple} for Hom-(co)associative (co)algebras and Hom-bialgebras.  It says that each Hom-(co)associative (co)algebra (or Hom-bialgebra) gives rise to a derived sequence of Hom-(co)associative (co)algebras (or Hom-bialgebras) with twisted (co)multiplications and twisting maps.

\begin{theorem}
\label{thm:secondtp}
Let $(A,\mu,\alpha)$ be a Hom-associative algebra, $(C,\Delta,\alpha)$ be a Hom-coassociative coalgebra, and $(H,\mu,\Delta,\alpha)$ be a Hom-bialgebra.  Then for each integer $n \geq 0$:
\begin{enumerate}
\item
$A^n = (A,\mu^{(n)} = \alpha^{2^n-1}\circ \mu,\alpha^{2^n})$ is a Hom-associative algebra.
\item
$C^n = (C,\Delta^{(n)} = \Delta \circ \alpha^{2^n-1},\alpha^{2^n})$ is a Hom-coassociative coalgebra.
\item
$H^n = (H,\mu^{(n)},\Delta^{(n)},\alpha^{2^n})$ is a Hom-bialgebra, where $\mu^{(n)} = \alpha^{2^n-1}\circ \mu$ and $\Delta^{(n)} = \Delta \circ \alpha^{2^n-1}$.
\end{enumerate}
\end{theorem}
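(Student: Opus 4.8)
The plan is to reduce everything to a single application of the derived construction and then iterate. For a Hom-associative algebra $(B,\nu,\beta)$ I would first check directly that the once-derived object $B^1 = (B,\beta\circ\nu,\beta^2)$ is Hom-associative. Multiplicativity is immediate, since both $\beta^2\circ(\beta\nu)$ and $(\beta\nu)\circ(\beta^2)^{\otimes 2}$ reduce to $\beta^3\circ\nu$, using the original multiplicativity in the iterated form $\nu\circ(\beta^k)^{\otimes 2} = \beta^k\circ\nu$. For Hom-associativity I would factor $\beta^2\otimes(\beta\nu) = (\beta\otimes\beta)\circ(\beta\otimes\nu)$ and $(\beta\nu)\otimes\beta^2 = (\beta\otimes\beta)\circ(\nu\otimes\beta)$, then push $\nu\circ(\beta\otimes\beta) = \beta\circ\nu$ through both sides; the Hom-associativity axiom for $B^1$ then becomes $\beta^2$ applied to that of $B$, which holds by hypothesis.

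Granting the once-derived case, part (1) follows by induction on $n$. The bookkeeping point is that applying the derived construction to $A^{n-1} = (A,\alpha^{2^{n-1}-1}\circ\mu,\alpha^{2^{n-1}})$ produces twisting map $(\alpha^{2^{n-1}})^2 = \alpha^{2^n}$ and multiplication $\alpha^{2^{n-1}}\circ(\alpha^{2^{n-1}-1}\circ\mu) = \alpha^{2^n-1}\circ\mu$, i.e.\ exactly $A^n$; with base case $A^0 = (A,\mu,\alpha)$, this proves the claim. Part (2) is dual: replacing $\nu,\beta$ by $\Delta,\alpha$ and multiplicativity by comultiplicativity $\alpha^{\otimes 2}\circ\Delta = \Delta\circ\alpha$, the once-derived coalgebra $C^1 = (C,\Delta\circ\alpha,\alpha^2)$ is Hom-coassociative because, after the analogous factoring, its axiom reduces to the original $(\alpha\otimes\Delta)\circ\Delta = (\Delta\otimes\alpha)\circ\Delta$ precomposed with a power of $\alpha$, and the same induction gives $C^n = (C^{n-1})^1$.

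For part (3), parts (1) and (2) already furnish the Hom-associative and Hom-coassociative structures sharing the twisting map $\alpha^{2^n}$, so only the compatibility \eqref{def:hombi} remains. The left-hand side is immediate: $\Delta^{(n)}\circ\mu^{(n)} = \Delta\circ\alpha^{2^n-1}\circ\alpha^{2^n-1}\circ\mu = \Delta\circ\alpha^{2^{n+1}-2}\circ\mu$. For the right-hand side I would pull the outer copies of $\alpha^{2^n-1}$ out of $(\mu^{(n)})^{\otimes 2}$ and the inner copies out of $(\Delta^{(n)})^{\otimes 2}$, apply the original compatibility to the freed central block $\mu^{\otimes 2}\circ(Id\otimes\tau\otimes Id)\circ\Delta^{\otimes 2} = \Delta\circ\mu$, and then absorb the remaining $\alpha$-powers by (co)multiplicativity; this again produces $\Delta\circ\alpha^{2^{n+1}-2}\circ\mu$, so the two sides coincide.

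These computations are routine once the twisting identities $\mu\circ(\alpha^k)^{\otimes 2} = \alpha^k\circ\mu$ and $(\alpha^k)^{\otimes 2}\circ\Delta = \Delta\circ\alpha^k$ are in hand. The only real care needed is the exponent bookkeeping—specifically, reconciling the mismatched powers $2^n$ and $2^n-1$ that arise whenever one tensor leg carries the twisting map $\alpha^{2^n}$ while the other carries the twisted (co)multiplication. Factoring a term such as $\alpha^{2^n}\otimes(\alpha^{2^n-1}\circ\mu)$ into pieces of the form $(\alpha\otimes\mu)$ so that multiplicativity can be applied is the main, if modest, obstacle.
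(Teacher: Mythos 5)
Your proposal is correct and follows essentially the same route as the paper: reduce to the once-derived case via $A^{n+1}=(A^n)^1$ (with the exponent bookkeeping $\mu^{(n+1)}=\alpha^{2^n}\circ\mu^{(n)}$ and $\alpha^{2^{n+1}}=(\alpha^{2^n})^2$) and verify the axioms for $A^1$, $C^1$, $H^1$ using the (co)multiplicativity of $\alpha$ together with the original Hom-(co)associativity and compatibility; your algebraic factorizations are exactly the commutative diagrams in the paper's proof. The only cosmetic difference is that you check the bialgebra compatibility directly for general $n$ via the iterated identities $\mu\circ(\alpha^k)^{\otimes 2}=\alpha^k\circ\mu$ and $(\alpha^k)^{\otimes 2}\circ\Delta=\Delta\circ\alpha^k$, rather than inducting from the $n=1$ case.
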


\begin{proof}
Consider the first assertion.  Note that $A^0 = A$, $A^1 = (A,\mu^{(1)}=\alpha\circ\mu,\alpha^2)$, and $A^{n+1} = (A^n)^1$ because
\[
\mu^{(n+1)} = \alpha^{2^n} \circ \mu^{(n)} \quad\text{and}\quad
\alpha^{2^{n+1}} = (\alpha^{2^n})^2.
\]
Therefore, by an induction argument, it suffices to prove the case $n = 1$.  In other words, we need to check the two axioms \eqref{homassaxioms} with $\mu^{(1)}$ and $\alpha^2$.  First, the multiplicativity for $A^1 = (A,\mu^{(1)}=\alpha\circ\mu,\alpha^2)$ says
\[
\alpha^2 \circ \mu^{(1)} = \mu^{(1)} \circ (\alpha^2)^{\otimes 2}.
\]
This is true by the following commutative diagram:
\[
\nicearrow
\xymatrix{
A \otimes A \ar[rr]^-{\mu} \ar[d]_-{\alpha^{\otimes 2}} & & A \ar[rr]^-{\alpha} \ar[d]^-{\alpha} & & A \ar[d]^-{\alpha}\\
A \otimes A \ar[rr]^-{\mu} \ar[d]_-{\alpha^{\otimes 2}} & & A \ar[d]^-{\alpha} & & A \ar[d]^-{\alpha}\\
A \otimes A \ar[rr]^-{\mu} & & A \ar[rr]^-{\alpha} & & A.
}
\]
The two rectangles on the left are commutative by the multiplicativity in $A$, and the right rectangle is trivially commutative.

Next, the Hom-associativity for $A^1$ says
\begin{equation}
\label{homassA1}
\mu^{(1)} \circ (\alpha^2 \otimes \mu^{(1)}) = \mu^{(1)} \circ (\mu^{(1)} \otimes \alpha^2).
\end{equation}
This is true by the following commutative diagram:
\begin{equation}
\label{a1homass}
\nicearrow
\xymatrix{
A \otimes A \otimes A \ar[rr]^-{\mu \otimes \alpha} \ar[d]_-{\alpha \otimes \mu} & & A \otimes A \ar[rr]^-{\alpha^{\otimes 2}} \ar[d]^-{\mu} & & A \otimes A \ar[d]^-{\mu}\\
A \otimes A \ar[rr]^-{\mu} \ar[d]_-{\alpha^{\otimes 2}} & & A \ar[rr]^-{\alpha} \ar[d]^-{\alpha} & & A \ar[d]^-{\alpha}\\
A \otimes A \ar[rr]^-{\mu} & & A \ar[rr]^-{\alpha} & & A.
}
\end{equation}
The composition along the left and then the bottom edges is the left-hand side in \eqref{homassA1}.  The composition along the top and then the right edges is the right-hand side in \eqref{homassA1}.  The upper left rectangle is commutative by the Hom-associativity in $A$.  The lower left and the upper right rectangles are commutative by the multiplicativity in $A$.  The lower right rectangle is trivially commutative.  We have shown that $A^1$ is a Hom-associative algebra, as desired.

The second assertion, that each $C^n$ is a Hom-coassociative coalgebra, is proved by the exact dual argument.  Indeed, for the case $C^1$, simply invert the arrows and replaces $\mu$ by $\Delta$ in the two commutative diagrams above.  Then an induction argument proves the assertion because, as above, $C^0 = C$ and $C^{n+1} = (C^n)^1$.

For the last assertion, since $H^0 = H$ and $H^{(n+1)} = (H^n)^1$ as before, by an induction argument it suffices to prove the case $n=1$.  From the first two assertions, we already know that $H^1$ is both a Hom-associative algebra and a Hom-coassociative coalgebra.  Thus, it remains to show the compatibility condition \eqref{def:hombi} for $\mu^{(1)}$ and $\Delta^{(1)}$, which says
\begin{equation}
\label{mudelta1}
\Delta^{(1)} \circ \mu^{(1)} = (\mu^{(1)})^{\otimes 2} \circ (2~3) \circ (\Delta^{(1)})^{\otimes 2},
\end{equation}
where $(2~3)$ is the abbreviation for $Id \otimes \tau \otimes Id$ (permuting the middle two entries).  The condition \eqref{mudelta1} holds by the following commutative diagram:
\begin{equation}
\label{compatmu1}
\nicearrow
\xymatrix{
H^{\otimes 2} \ar[rr]^-{\alpha^{\otimes 2}} \ar[d]_-{\mu} & & H^{\otimes 2} \ar[rr]^-{\Delta^{\otimes 2}} \ar[d]^-{\alpha^{\otimes 2}} & & H^{\otimes 4} \ar[rr]^-{(2~3)} \ar[d]^-{\alpha^{\otimes 4}} & & H^{\otimes 4} \ar[d]^-{\alpha^{\otimes 4}}\\
H \ar[d]_-{\alpha} & & H^{\otimes 2} \ar[rr]^-{\Delta^{\otimes 2}} \ar[d]^-{\mu} & & H^{\otimes 4} \ar[rr]^-{(2~3)} & & H^{\otimes 4} \ar[d]^-{\mu^{\otimes 2}}\\
H \ar[rr]^-{\alpha} & & H \ar[rrrr]^-{\Delta} & & & & H^{\otimes 2}.
}
\end{equation}
Along the left and the bottom edges of the big rectangle, the composition is the left-hand side of \eqref{mudelta1}.  Along the top and the right edges of the big rectangle, the composition is the right-hand side of \eqref{mudelta1}. The left square is commutative by the multiplicativity in $H$ twice, and the upper middle rectangle is commutative by the comultiplicativity in $H$ twice.  The bottom right rectangle is commutative by the compatibility between $\mu$ and $\Delta$ in $H$, i.e., \eqref{def:hombi}.  The upper right rectangle is trivially commutative.
\end{proof}

In the context of Theorem \ref{thm:secondtp}, we call $A^n = (A,\mu^{(n)} = \alpha^{2^n-1}\circ \mu,\alpha^{2^n})$ the $n$th \textbf{derived Hom-associative algebra} of $A$, $C^n = (C,\Delta^{(n)} = \Delta \circ \alpha^{2^n-1},\alpha^{2^n})$ the $n$th \textbf{derived Hom-coassociative coalgebra} of $C$, and $H^n = (H,\mu^{(n)},\Delta^{(n)},\alpha^{2^n})$ the $n$th \textbf{derived Hom-bialgebra} of $H$.  Theorem \ref{thm:secondtp} will be used several times in the next few sections.

Some remarks are in order.

\begin{remark}
\label{rk:derivedhomalg}
\begin{enumerate}
\item
In the proofs of the Hom-associativity in $A^1$ \eqref{a1homass} and of the compatibility condition in $H^1$ \eqref{compatmu1}, the (co)multiplicativity of $\alpha$ with respect to $\mu$ and $\Delta$ is crucial.  This is one reason why we insist on the (co)multiplicativity condition in the definition of a Hom-(co)associative (co)algebra, even though it was not part of the original definition in \cite{ms,ms4}
\item
Theorem \ref{thm:secondtp} has obvious analogs for $G$-Hom-(co)associative (co)algebras \cite{ms2,ms4} with essentially the same formulations, provided the twisting map $\alpha$ is assumed to be (co)multiplicative.  Here $G$ is any subgroup of the symmetric group on $3$ letters.  These objects are the Hom-type analogs of the $G$-(co)associative (co)algebras in \cite{gr}.  In particular, the second Twisting Principle \ref{twistingprinciple} applies to Hom-Lie and Hom-Lie-admissible Hom-(co)algebras.  It is also not hard to check that there are analogs of Theorem \ref{thm:secondtp} for Hom-Novikov algebras \cite{hb,yau5}, Hom-alternative algebras \cite{mak}, and Hom-Lie bialgebras \cite{yau8}.
\item
If we first use Theorem \ref{thm:hombialg} (the first Twisting Principle) and then Theorem \ref{thm:secondtp} (the second Twisting Principle), we actually obtain a special case of Theorem \ref{thm:hombialg}.  Indeed, suppose $(A,\mu)$ is an associative algebra and $\alpha \colon A \to A$ is an algebra morphism.  Applying Theorem \ref{thm:secondtp} to the Hom-associative algebra $A_\alpha = (A,\mu_\alpha = \alpha\circ\mu,\alpha)$ in Theorem \ref{thm:hombialg}, we obtain
\[
(A_\alpha)^1 = (A,\mu_\alpha^{(1)} = \alpha \circ \mu_\alpha = \alpha^2 \circ \mu,\alpha^2).
\]
In other words, we have $(A_\alpha)^1 = A_{\alpha^2}$, which is Theorem \ref{thm:hombialg} for $\alpha^2$.  Inductively, we have
\[
(A_\alpha)^n = A_{\alpha^{2^n}}
\]
for all $n$.  Similar remarks apply to coassociative coalgebras and bialgebras.
\item
On the other hand, there are examples of Hom-associative algebras and Hom-bialgebras that are not of the form $A_\alpha$, in which case Theorem \ref{thm:secondtp} produces Hom-associative algebras and Hom-bialgebras that cannot be obtained from Theorem \ref{thm:hombialg}. For example, the free Hom-associative algebra generated by a non-zero module \cite{yau3} and the universal enveloping Hom-bialgebra of a Hom-Lie algebra \cite{yau,yau3} are not of the form $A_\alpha$.  In fact, in $A_\alpha = (A,\mu_\alpha,\alpha)$ (as in Theorem \ref{thm:hombialg}), the image of the multiplication $\mu_\alpha = \alpha\circ\mu$ is contained in that of $\alpha$.  So given a Hom-associative algebra (or a Hom-bialgebra), as long as the image of its multiplication is not contained in that of its  twisting map $\alpha$, the Hom-associative algebra (or the Hom-bialgebra) in question is not of the form $A_\alpha$.
\end{enumerate}
\end{remark}

\section{Quasi-triangular and cobraided Hom-bialgebras}
\label{sec:qthb}

The purpose of this section is to establish the second Twisting Principle \ref{twistingprinciple} for quasi-triangular and cobraided Hom-bialgebras.  They are Hom-type analogs of Drinfel'd's quasi-triangular bialgebras \cite{dri87} and of the dual objects of cobraided bialgebras \cite{hay,lt,majid91,sch}.  These Hom-quantum groups were first studied in \cite{yau9,yau10}, where the first Twisting Principle was established for them and where many examples can be found.

To define quasi-triangular Hom-bialgebras, we need the following notion of units from \cite{fg2}.

\begin{definition}
\label{def:weakunit}
\begin{enumerate}
\item
Let $(A,\mu,\alpha)$ be a Hom-associative algebra.  A \textbf{weak unit} \cite{fg2} of $A$ is an element $c \in A$ such that $\alpha(x) = cx = xc$ for all $x \in A$.  In this case, we call $(A,\mu,\alpha,c)$ a \textbf{weakly unital Hom-associative algebra}.
\item
Let $(A,\mu,\alpha,c)$ be a weakly unital Hom-associative algebra and $R \in A^{\otimes 2}$.  Define the following elements in $A^{\otimes 3}$:
\begin{equation}
\label{R123}
R_{12} = R \otimes c,\quad R_{23} = c \otimes R, \quad\text{and}\quad R_{13} = (\tau \otimes Id)(R_{23}).
\end{equation}
\end{enumerate}
\end{definition}

\begin{example}[\cite{fg2} Example 2.2]\label{ex:weakunit}
If $(A,\mu,1)$ is a unital associative algebra, then the Hom-associative algebra $A_\alpha = (A,\mu_\alpha,\alpha)$ (Theorem \ref{thm:hombialg}) has a weak unit $c = 1$.\qed
\end{example}

\begin{definition}
\label{def:qthb}
A \textbf{quasi-triangular Hom-bialgebra} \cite{yau9} is a tuple $(H,\mu,\Delta,\alpha,c,R)$ in which $(H,\mu,\Delta,\alpha)$ is a Hom-bialgebra, $c$ is a weak unit of $(H,\mu,\alpha)$, and $R \in H^{\otimes 2}$ satisfies the following three axioms:
\begin{subequations}
\label{qtaxioms}
\begin{align}
(\Delta \otimes \alpha)(R) &= R_{13}R_{23},\label{R1323}\\
(\alpha \otimes \Delta)(R) &= R_{13}R_{12},\quad\text{and}\label{R1312}\\
[\Delta^{op}(x)]R &= R\Delta(x)\label{RDelta}
\end{align}
\end{subequations}
for all $x \in H$, where $\Delta^{op} = \tau\circ\Delta$ and the elements $R_{12}$, $R_{13}$, and $R_{23}$ are defined in \eqref{R123}.  We call $R$ the \textbf{Hom-braiding element} of $H$.
\end{definition}

In the axioms \eqref{qtaxioms}, the multiplication is done in each tensor factor.  Thus, with $R = \sum s_i \otimes t_i \in H^{\otimes 2}$, the three axioms in \eqref{qtaxioms} can be restated respectively as:
\begin{align*}
\sum s_i' \otimes s_i'' \otimes \alpha(t_i) &= \sum \alpha(s_i) \otimes \alpha(s_j) \otimes t_it_j,\\
\sum \alpha(s_i) \otimes t_i' \otimes t_i'' &= \sum s_js_i \otimes \alpha(t_i) \otimes \alpha(t_j),\quad\text{and}\\
\sum x_2s_i \otimes x_1t_i &= \sum s_ix_1 \otimes t_ix_2,
\end{align*}
where $\Delta(x) = \sum x_1 \otimes x_2$, $\Delta(s_i) = \sum s_i' \otimes s_i''$, and $\Delta(t_i) = \sum t_i' \otimes t_i''$.

The following result is the second Twisting Principle \ref{twistingprinciple} for quasi-triangular Hom-bialgebras.  It says that each quasi-triangular Hom-bialgebra gives rise to a derived sequence of quasi-triangular Hom-bialgebras with twisted (co)multiplications and twisting maps.  If, in addition, the twisting map $\alpha$ is surjective, then there is a derived double-sequence of quasi-triangular Hom-bialgebras, where the Hom-braiding elements are also twisted.

\begin{theorem}
\label{thm:qtsecond}
Let $(H,\mu,\Delta,\alpha,c,R)$ be a quasi-triangular Hom-bialgebra.
\begin{enumerate}
\item
Then $H^n = (H,\mu^{(n)},\Delta^{(n)},\alpha^{2^n},c,R)$ is a quasi-triangular Hom-bialgebra for each integer $n \geq 0$, where $\mu^{(n)} = \alpha^{2^n-1}\circ\mu$ and $\Delta^{(n)} = \Delta \circ \alpha^{2^n-1}$.
\item
If $\alpha$ is surjective, then $H^{n,k} = (H,\mu^{(n)},\Delta^{(n)},\alpha^{2^n},c,\rk)$ is a quasi-triangular Hom-bialgebra for each pair of integers $n,k \geq 0$, where $\rk = (\alpha^{k} \otimes \alpha^{k})(R)$.
\end{enumerate}
\end{theorem}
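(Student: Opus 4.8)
The plan is to build on Theorem~\ref{thm:secondtp}, which already provides the underlying Hom-bialgebra structure common to $H^n$ and $H^{n,k}$: these share the same structure maps $\mu^{(n)}$, $\Delta^{(n)}$, and $\alpha^{2^n}$ and differ only in the Hom-braiding element. Consequently, for each object only two things remain to be checked, namely that $c$ is still a weak unit and that the designated Hom-braiding element satisfies the three axioms \eqref{qtaxioms}. I would separate the problem into two independent moves: a \emph{deriving} move, which twists the structure maps $\mu,\Delta,\alpha$ while keeping the Hom-braiding element fixed (this is part (1)), and a \emph{braiding-twist} move, which replaces $R$ by $\rk$ while keeping the structure maps fixed. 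Part (2) would then follow formally by composing the two.

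For part (1), I would verify the conditions directly for every $n$. The weak-unit condition is immediate from $\mu^{(n)}(c,x)=\alpha^{2^n-1}(cx)=\alpha^{2^n-1}(\alpha(x))=\alpha^{2^n}(x)$ and the symmetric computation on the right, using that $c$ is a weak unit in $(H,\mu,\alpha)$. For the axioms \eqref{R1323} and \eqref{R1312} with the unchanged element $R=\sum s_i\otimes t_i$, I would apply $(\alpha^{2^n-1})^{\otimes 3}$ to the corresponding axiom in $H$: comultiplicativity of $\alpha$ turns the two copies of $\alpha^{2^n-1}$ hitting a Sweedler leg into the legs of $\Delta^{(n)}=\Delta\circ\alpha^{2^n-1}$, while multiplicativity together with $cs=\alpha(s)=sc$ evaluates the products $R_{13}R_{23}$ and $R_{13}R_{12}$ taken with $\mu^{(n)}$, and both sides then agree. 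For the axiom \eqref{RDelta} applied to an arbitrary $z\in H$, I would instead substitute $\alpha^{2^n-1}(z)$ for $x$ in the original \eqref{RDelta} and then apply $(\alpha^{2^n-1})^{\otimes 2}$; comultiplicativity rewrites the Sweedler components of $\alpha^{2^n-1}(z)$ as $\alpha^{2^n-1}(z_1)\otimes\alpha^{2^n-1}(z_2)$, and multiplicativity distributes the outer twist, reproducing exactly $[\Delta^{(n),\mathrm{op}}(z)]\,R=R\,\Delta^{(n)}(z)$ computed with $\mu^{(n)}$. Since the original axiom holds for every input, the substitution is legitimate for all $z$, so no surjectivity hypothesis is needed in part (1).

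Having established part (1), I would prove part (2) by reducing to it via the identity $H^{n,k}=(H^{0,k})^n$, where $H^{0,k}=(H,\mu,\Delta,\alpha,c,\rk)$ carries the \emph{original} structure maps and the twisted Hom-braiding element $\rk=(\alpha^k\otimes\alpha^k)(R)$. Indeed, applying part (1) to the quasi-triangular Hom-bialgebra $H^{0,k}$ produces the derived object with multiplication $\alpha^{2^n-1}\circ\mu=\mu^{(n)}$, comultiplication $\Delta\circ\alpha^{2^n-1}=\Delta^{(n)}$, twisting map $\alpha^{2^n}$, weak unit $c$, and unchanged Hom-braiding element $\rk$, which is precisely $H^{n,k}$. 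It therefore suffices to prove that $H^{0,k}$ is quasi-triangular. The weak unit is inherited unchanged from $H$. For the axioms \eqref{R1323} and \eqref{R1312} with $\rk$, I would apply $(\alpha^k)^{\otimes 3}$ to the two axioms for $R$ in $H$; since the structure maps here are the original $\mu,\Delta,\alpha$ and $\alpha^k$ commutes with both $\mu$ and $\Delta$, the twist transports each side correctly (again using $c\,\alpha^k(s_i)=\alpha^{k+1}(s_i)$ for the products), with no surjectivity required.

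The main obstacle, and the precise point at which surjectivity of $\alpha$ is forced, is the axiom \eqref{RDelta} for $\rk$. Applying $(\alpha^k)^{\otimes 2}$ to the original \eqref{RDelta} reproduces the desired identity $[\Delta^{\mathrm{op}}(z)]\rk=\rk\,\Delta(z)$, but only after the input element is simultaneously twisted: the operator $(\alpha^k)^{\otimes 2}$ converts the braiding legs into $\alpha^k(s_i)\otimes\alpha^k(t_i)$ and, at the same time, the Sweedler legs into $\alpha^k(x_1)\otimes\alpha^k(x_2)=\Delta(\alpha^k(x))$, so that the identity is obtained only for $z$ lying in the image of $\alpha^k$. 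When $\alpha$, and hence $\alpha^k$, is surjective, every $z\in H$ has this form, and the axiom holds for all $z$; this yields that $H^{0,k}$ is quasi-triangular and, combined with part (1), completes part (2). I expect this input-shifting phenomenon to be the only genuinely delicate step, the remaining verifications being the same kind of element-wise twistings and (co)multiplicativity bookkeeping already used in the proof of Theorem~\ref{thm:secondtp}.
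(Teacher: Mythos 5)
Your proposal is correct and follows essentially the same route as the paper: part (1) is proved by applying powers of $\alpha$ to the three axioms \eqref{qtaxioms}, with the axiom \eqref{RDelta} handled by the same substitution trick (evaluating the original axiom at a twisted element, which is why no surjectivity is needed there), and part (2) is obtained by the identical reduction $H^{n,k}=(H^{0,k})^n$ followed by part (1). The only differences are cosmetic: you verify part (1) directly for general $n$ where the paper inducts via $H^{n+1}=(H^n)^1$, and you prove in-line that $H^{0,k}$ is quasi-triangular (correctly locating the surjectivity hypothesis in the axiom \eqref{RDelta} for $\rk$) where the paper cites this as Theorem 3.3 of an earlier work.
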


\begin{proof}
Consider the first assertion.  By Theorem \ref{thm:secondtp} we already know that $H^n$ is a Hom-bialgebra.  As in the proof of Theorem \ref{thm:secondtp}, since $H^0 = H$ and $H^{n+1} = (H^n)^1$, by an induction argument it suffices to prove the case $n=1$.  In other words, we need to show that $c$ is a weak unit of $H^1$ and that the axioms \eqref{qtaxioms} are satisfied in $H^1$.

To see that $c$ is a weak unit of $H^1$, pick any element $x \in H$.  With $\mu$ written as concatenation, we compute as follows:
\[
\begin{split}
\mu^{(1)}(x,c) &= \alpha(xc)\\
&= \alpha^2(x) \\
&= \alpha(cx) \\
&= \mu^{(1)}(c,x).
\end{split}
\]
This proves that $c$ is still a weak unit of $H^1$.  It remains to prove the three axioms \eqref{qtaxioms} for $H^1$.

The axiom \eqref{R1323} for $H^1$ says
\[
(\Delta^{(1)} \otimes \alpha^2)(R) = \mu^{(1)}(R_{13},R_{23}),
\]
where the right-hand side means applying $\mu^{(1)} = \alpha\circ\mu$ in each of the three tensor factors.  With $R = \sum s_i \otimes t_i$, we compute as follows:
\[
\begin{split}
(\Delta^{(1)} \otimes \alpha^2)(R)
&= ((\alpha^{\otimes 2} \circ \Delta) \otimes \alpha^2)(R)\\
&= \alpha^{\otimes 3}((\Delta \otimes \alpha)(R))\\
&= \alpha^{\otimes 3}\left(R_{13}R_{23}\right)\\
&= \sum \alpha(s_ic) \otimes \alpha(cs_j) \otimes \alpha(t_it_j)\\
&= \sum \mu^{(1)}(s_i,c) \otimes \mu^{(1)}(c,s_j) \otimes \mu^{(1)}(t_i,t_j)\\
&= \mu^{(1)}(R_{13},R_{23}).
\end{split}
\]
In the third equality above, we used the axiom \eqref{R1323} for $H$.  Likewise, the axiom \eqref{R1312} for $H^1$ says
\[
(\alpha^2 \otimes \Delta^{(1)})(R) = \mu^{(1)}(R_{13},R_{12}).
\]
This is true by the following computation:
\[
\begin{split}
(\alpha^2 \otimes \Delta^{(1)})(R)
&= \alpha^{\otimes 3}((\alpha \otimes \Delta)(R))\\
&= \alpha^{\otimes 3}(R_{13}R_{12})\\
&= \sum \alpha(s_js_i) \otimes \alpha(ct_i) \otimes \alpha(t_jc)\\
&= \mu^{(1)}(R_{13},R_{12}).
\end{split}
\]
In the second equality above, we used the axiom \eqref{R1312} for $H$.  Finally, the axiom \eqref{RDelta} for $H^1$ says
\[
\mu^{(1)}((\Delta^{(1)})^{op}(x),R) = \mu^{(1)}(R,\Delta^{(1)}(x)).
\]
This is true by the following computation:
\[
\begin{split}
\mu^{(1)}((\Delta^{(1)})^{op}(x),R)
&= \alpha^{\otimes 2}\left(\Delta^{op}(\alpha(x))R\right)\\
&= \alpha^{\otimes 2}(R\Delta(\alpha(x)))\\
&= \mu^{(1)}(R,\Delta^{(1)}(x)).
\end{split}
\]
In the second equality above, we used the axiom \eqref{RDelta} for $H$, applied to the element $\alpha(x)$.  We have shown that $H^1$ is a quasi-triangular Hom-bialgebra.  By the remark in the first paragraph of this proof, we have proved the first assertion of the Theorem.

Next consider the second assertion of the Theorem.  It is proved in \cite{yau9} (Theorem 3.3) that for a quasi-triangular Hom-bialgebra $H$ with $\alpha$ surjective, the object $H^{0,k} = (H,\mu,\Delta,\alpha,c,\rk)$ is also a quasi-triangular Hom-bialgebra for each $k \geq 0$.  Now apply the first assertion to $H^{0,k}$, and observe that $(H^{0,k})^n = H^{n,k}$.
\end{proof}

Now we consider the dual objects of cobraided Hom-bialgebras.

\begin{definition}
\label{def:dqt}
A \textbf{cobraided Hom-bialgebra} \cite{yau10} is a quintuple $(H,\mu,\Delta,\alpha,R)$ in which $(H,\mu,\Delta,\alpha)$ is a Hom-bialgebra and $R$ is a bilinear form on $H$ (i.e., $R \in \Hom(H^{\otimes 2},\bk)$), satisfying the following three axioms:
\begin{subequations}
\label{dqtaxioms}
\begin{align}
R \circ (\mu \otimes \alpha) &= R^{\otimes 2} \circ (2~3) \circ (\alpha^{\otimes 2} \otimes \Delta),\label{axiom1}\\
R \circ (\alpha \otimes \mu) &= R^{\otimes 2} \circ (2~3~4) \circ (\Delta \otimes \alpha^{\otimes 2}) ,\quad\text{and}\label{axiom2}\\
(\mu \otimes R) \circ (1~2~3) \circ \Delta^{\otimes 2} &= (R \otimes \mu) \circ (2~3) \circ \Delta^{\otimes 2}.\label{axiom3}
\end{align}
\end{subequations}
Here $(2~3)$, $(2~3~4)$, and $(1~2~3)$ mean permutations of the tensor factors.  We call $R$ the \textbf{Hom-cobraiding form} of $H$.
\end{definition}

In terms of elements $x,y,z \in H$, the three axioms \eqref{dqtaxioms} can be restated respectively as:
\begin{align*}
\label{dqtax}
R(xy \otimes \alpha(z)) &= \sum_{(z)} R(\alpha(x) \otimes z_1)R(\alpha(y) \otimes z_2),\\
R(\alpha(x) \otimes yz) &= \sum_{(x)} R(x_1 \otimes \alpha(z))R(x_2 \otimes \alpha(y)),\quad\text{and}\\
\sum_{(x)(y)} y_1x_1R(x_2 \otimes y_2) &= \sum_{(x)(y)} R(x_1 \otimes y_1)x_2y_2.
\end{align*}

The following result is the second Twisting Principle \ref{twistingprinciple} for cobraided Hom-bialgebras.  It says that each cobraided Hom-bialgebra gives rise to a derived sequence of cobraided Hom-bialgebras with twisted (co)multiplications and twisting maps.  If, in addition, the twisting map $\alpha$ is injective, then there is a derived double-sequence of cobraided Hom-bialgebras, where the Hom-cobraiding forms are also twisted.

\begin{theorem}
\label{thm:cobraidedsecond}
Let $(H,\mu,\Delta,\alpha,R)$ be a cobraided Hom-bialgebra.
\begin{enumerate}
\item
Then $H^n = (H,\mu^{(n)},\Delta^{(n)},\alpha^{2^n},R)$ is a cobraided Hom-bialgebra for each integer $n \geq 0$, where $\mu^{(n)} = \alpha^{2^n-1}\circ\mu$ and $\Delta^{(n)} = \Delta \circ \alpha^{2^n-1}$.
\item
If $\alpha$ is injective, then $H^{n,k} = (H,\mu^{(n)},\Delta^{(n)},\alpha^{2^n},\rk)$ is a cobraided Hom-bialgebra for each pair of integers $n,k \geq 0$, where $\rk = R \circ (\alpha^k \otimes \alpha^k)$.
\end{enumerate}
\end{theorem}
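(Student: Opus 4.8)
The plan is to mirror the proof of Theorem \ref{thm:qtsecond}, exploiting the fact that the cobraided axioms \eqref{dqtaxioms} are the linear duals of the quasi-triangular axioms \eqref{qtaxioms}. For the first assertion, Theorem \ref{thm:secondtp} already establishes that $H^n = (H,\mu^{(n)},\Delta^{(n)},\alpha^{2^n})$ is a Hom-bialgebra, so only the three cobraided axioms \eqref{dqtaxioms}, now with the \emph{unchanged} form $R$, remain to be verified. As in the proof of Theorem \ref{thm:secondtp}, the relations $H^0 = H$ and $H^{n+1} = (H^n)^1$ reduce the problem by induction to the single case $n=1$, where $\mu^{(1)} = \alpha\circ\mu$, $\Delta^{(1)} = \Delta\circ\alpha$, and the twisting map is $\alpha^2$.

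The central mechanism is to verify each axiom of $H^1$ by applying the corresponding axiom of $H$ to the twisted arguments $\alpha(x),\alpha(y),\alpha(z)$ and then simplifying with the multiplicativity $\alpha(x)\alpha(y) = \alpha(xy)$ and the comultiplicativity $\Delta(\alpha(x)) = \Delta^{(1)}(x)$ of $\alpha$. I would work with the element-wise forms of the axioms displayed after Definition \ref{def:dqt}. For axiom \eqref{axiom1} in $H^1$, the left-hand side is $R(\mu^{(1)}(x,y)\otimes\alpha^2(z)) = R(\alpha(x)\alpha(y)\otimes\alpha(\alpha(z)))$, which by axiom \eqref{axiom1} for $H$ equals $\sum R(\alpha^2(x)\otimes\alpha(z)_1)R(\alpha^2(y)\otimes\alpha(z)_2)$; since $\alpha(z)_1\otimes\alpha(z)_2 = \Delta^{(1)}(z)$, this is precisely the right-hand side for $H^1$. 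Axiom \eqref{axiom2} is dispatched the same way. Both of these axioms are scalar-valued, so no further twisting is needed once the original axiom is invoked on $\alpha(x),\alpha(y),\alpha(z)$.

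The only axiom requiring an extra step is the element-valued axiom \eqref{axiom3}. After applying it in $H$ to $\alpha(x),\alpha(y)$, both sides land in $H$ rather than $\bk$; I would then apply $\alpha$ to the whole identity, pulling the scalar values of $R$ outside, so that each product $\alpha(y)_1\alpha(x)_1$ becomes $\alpha(\alpha(y)_1\alpha(x)_1) = \mu^{(1)}(\alpha(y)_1,\alpha(x)_1)$ and likewise on the right. This yields axiom \eqref{axiom3} for $H^1$, and the single extra $\alpha$ is exactly the dual of the lone multiplication $\mu^{(1)} = \alpha\circ\mu$ that distinguishes \eqref{axiom3} from the scalar-valued axioms.

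For the second assertion, I would first invoke the first Twisting Principle for cobraided Hom-bialgebras established in \cite{yau10}: when $\alpha$ is injective, $H^{0,k} = (H,\mu,\Delta,\alpha,\rk)$ with $\rk = R\circ(\alpha^k\otimes\alpha^k)$ is again a cobraided Hom-bialgebra for each $k\geq 0$. Applying the first assertion to $H^{0,k}$ and noting that $(H^{0,k})^n = H^{n,k}$ then completes the argument. I expect no genuine obstacle beyond bookkeeping; the one point demanding care is the extra $\alpha$-application in \eqref{axiom3}, together with correctly transporting the permutations $(2~3)$, $(2~3~4)$, and $(1~2~3)$ through the twisted comultiplications $\Delta^{(1)}$ when translating between the diagrammatic and element-wise forms.
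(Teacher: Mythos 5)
Your proposal is correct and follows essentially the same route as the paper: reduction by induction to the case $n=1$ via $H^{n+1}=(H^n)^1$, appeal to Theorem \ref{thm:secondtp} for the Hom-bialgebra structure, verification of the three axioms \eqref{dqtaxioms} by feeding $\alpha(x),\alpha(y),\alpha(z)$ into the axioms for $H$ (with the single extra application of $\alpha$ in \eqref{axiom3}), and for the second assertion the citation of Theorem 5.1 of \cite{yau10} combined with $(H^{0,k})^n = H^{n,k}$. The only difference is presentational: the paper verifies the axioms with commutative diagrams, mirroring its proof of Theorem \ref{thm:qtsecond}, while you carry out the equivalent element-wise computations.
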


\begin{proof}
Consider the first assertion.  As in the proof of Theorem \ref{thm:qtsecond}, by an induction argument it suffices to prove that $H^1$ is a cobraided Hom-bialgebra.  Since $H^1$ is known to be a Hom-bialgebra by Theorem \ref{thm:secondtp}, it remains to establish the three conditions \eqref{dqtaxioms} for $H^1$.

The axiom \eqref{axiom1} for $H^1$ says
\[
R \circ (\mu^{(1)} \otimes \alpha^2) = R^{\otimes 2} \circ (2~3) \circ \left((\alpha^2)^{\otimes 2} \otimes \Delta^{(1)}\right).
\]
This is true by the following commutative diagram:
\[
\nicearrow
\xymatrix{
H^{\otimes 3} \ar[rr]^-{\alpha^{\otimes 3}} \ar[dd]_-{\mu^{(1)} \otimes \alpha^2} & & H^{\otimes 3} \ar[rr]^-{\alpha^{\otimes 2} \otimes \Delta} \ar[dd]_-{\mu \otimes \alpha} & & H^{\otimes 4} \ar[d]^-{(2~3)}\\
& & & & H^{\otimes 4} \ar[d]^-{R^{\otimes 2}}\\
H^{\otimes 2} \ar@{=}[rr] & & H^{\otimes 2} \ar[rr]^-{R} & & \bk.
}
\]
The left square is commutative by definition.  The right square is commutative by the axiom \eqref{axiom1} for $H$.  Likewise, the axiom \eqref{axiom2} for $H^1$ says
\[
R \circ (\alpha^2 \otimes \mu^{(1)}) = R^{\otimes 2} \circ (2~3~4) \circ \left(\Delta^{(1)} \otimes (\alpha^2)^{\otimes 2}\right).
\]
This is true by the following commutative diagram:
\[
\nicearrow
\xymatrix{
H^{\otimes 3} \ar[rr]^-{\alpha^{\otimes 3}} \ar[dd]_-{\alpha^2 \otimes \mu^{(1)}} & & H^{\otimes 3} \ar[rr]^-{\Delta \otimes \alpha^{\otimes 2}} \ar[dd]_-{\alpha \otimes \mu} & & H^{\otimes 4} \ar[d]^-{(2~3~4)}\\
& & & & H^{\otimes 4} \ar[d]^-{R^{\otimes 2}}\\
H^{\otimes 2} \ar@{=}[rr] & & H^{\otimes 2} \ar[rr]^-{R} & & \bk.
}
\]
Again the left square is commutative by definition, and the right square is commutative by the axiom \eqref{axiom2} for $H$.  Finally, the axiom \eqref{axiom3} for $H^1$ says
\[
(\mu^{(1)} \otimes R) \circ (1~2~3) \circ (\Delta^{(1)})^{\otimes 2} = (R \otimes \mu^{(1)}) \circ (2~3) \circ (\Delta^{(1)})^{\otimes 2}.
\]
This is true by the following commutative diagram:
\[
\nicearrow
\xymatrix{
H^{\otimes 2} \ar[rr]^-{\alpha^{\otimes 2}} \ar[d]_-{(\Delta^{(1)})^{\otimes 2}} & & H^{\otimes 2} \ar[rr]^-{\Delta^{\otimes 2}} \ar[d]_-{\Delta^{\otimes 2}} & & H^{\otimes 4} \ar[d]^-{(2~3)}\\
H^{\otimes 4} \ar[d]_-{(1~2~3)} & & H^{\otimes 4} \ar[d]_-{(1~2~3)} & & H^{\otimes 4} \ar[d]^-{R \otimes \mu}\\
H^{\otimes 4} \ar[d]_-{\mu^{(1)} \otimes R} & & H^{\otimes 4} \ar[rr]^-{\mu \otimes R} \ar[d]_-{\mu^{(1)} \otimes R} & & H \ar[d]^-{\alpha}\\
H \ar@{=}[rr] & & H \ar@{=}[rr] & & H.
}
\]
The left rectangle and the bottom right rectangle are commutative by definition.  The top right square is commutative by the axiom \eqref{axiom3} for $H$.  This finishes the proof of the first assertion of the Theorem.

Next consider the second assertion.  It is proved in \cite{yau10} (Theorem 5.1) that for a cobraided Hom-bialgebra $H$ with $\alpha$ injective, the object $H^{0,k} = (H,\mu,\Delta,\alpha,\rk)$ is also a cobraided Hom-bialgebra for each $k \geq 0$.  Now apply the first assertion to $H^{0,k}$, and observe that $(H^{0,k})^n = H^{n,k}$.
\end{proof}

\section{Representations of Hom-algebras}
\label{sec:modules}

The purpose of this section is to study (co)modules over Hom-(co)associative (co)algebras.  We first establish the second Twisting Principle \ref{twistingprinciple} for modules over Hom-associative algebras.  In particular, we show that each module over a Hom-associative algebra gives rise to a derived double-sequence of modules with twisted actions (Corollary \ref{cor:twisthommodule}).  For the first Twisting Principle \ref{twistingprinciple}, starting with a module in the usual sense and a suitable pair of morphisms, we obtain a derived double-sequence of modules over Hom-associative algebras (Theorem \ref{thm:twistmodule}).
As examples, we construct multi-parameter, uncountable classes of modules over the Hom-quantum enveloping algebras $\uqsln_\al$ (Examples \ref{ex:uqmodule} - \ref{ex:slnmodule}).  The Twisting Principles for modules over Hom-associative algebras can be dualized to comodules over Hom-coassociative coalgebras (Theorems \ref{thm:twisthomco} and \ref{thm:twistco}).

\begin{definition}
\label{def:hommodule}
Let $(A,\mu_A,\alpha_A)$ be a Hom-associative algebra and $(C,\Delta_C,\alpha_C)$ be a Hom-coassociative coalgebra (Definition \ref{def:homas}).
\begin{enumerate}
\item
A \textbf{Hom-module} is a pair $(M,\alpha_M)$ consisting of a $\bk$-module $M$ and a linear self-map $\alpha_M \colon M \to M$.  A morphism of Hom-modules $f \colon (M,\alpha_M) \to (N,\alpha_N)$ is a morphism of the underlying $\bk$-modules that is compatible with the twisting maps, in the sense that $\alpha_N \circ f = f \circ \alpha_M$.  The tensor product of two Hom-modules $M$ and $N$ is the pair $(M \otimes N, \alpha_M \otimes \alpha_N)$.
\item
An \textbf{$A$-module} is a Hom-module $(M,\alpha_M)$ together with a linear map $\rho \colon A \otimes M \to M$, called the \textbf{structure map}, such that
\begin{equation}
\label{eq:moduleaxiom}
\begin{split}
\alpha_M \circ \rho &= \rho \circ (\alpha_A \otimes \alpha_M) \quad \text{(multiplicativity)},\\
\rho \circ (\alpha_A \otimes \rho) &= \rho \circ (\mu_A \otimes \alpha_M) \quad \text{(Hom-associativity)}.
\end{split}
\end{equation}
A morphism $f \colon M \to N$ of $A$-modules is a morphism of the underlying Hom-modules that is compatible with the structure maps, in the sense that
\begin{equation}
\label{eq:modmorphism}
f \circ \rho_M = \rho_N \circ (Id_A \otimes f).
\end{equation}
\item
A \textbf{$C$-comodule} is a Hom-module $(M,\alpha_M)$ together with a linear map $\rho \colon M \to C \otimes M$, called the \textbf{structure map}, such that
\begin{equation}
\label{eq:comoduleaxiom}
\begin{split}
\rho \circ \alpha_M &= (\alpha_C \otimes \alpha_M) \circ \rho \quad \text{(comultiplicativity)},\\
(\alpha_C \otimes \rho) \circ \rho &= (\Delta_C \otimes \alpha_M) \circ \rho \quad \text{(Hom-coassociativity)}.
\end{split}
\end{equation}
A morphism $f \colon M \to N$ of $C$-comodules is a morphism of the underlying Hom-modules that is compatible with the structure maps, in the sense that $(Id_C \otimes f) \circ \rho_M = \rho_N \circ f$.
\end{enumerate}
\end{definition}

Note that the (co)multiplicativity conditions in \eqref{eq:moduleaxiom} and \eqref{eq:comoduleaxiom} are equivalent to $\rho$ being a morphism of Hom-modules.  The following result is a version of the second Twisting Principle \ref{twistingprinciple} for modules over a Hom-associative algebra.

\begin{theorem}
\label{thm:twisthommodule}
Let $(A,\mu,\alpha_A)$ be a Hom-associative algebra and $(M,\alpha_M)$ be an $A$-module with structure map $\rho \colon A \otimes M \to M$.  For each integer $n \geq 0$, define the map
\begin{equation}
\label{rhon0}
\rho^{n,0} = \rho \circ (\alpha_A^n \otimes Id_M) \colon A \otimes M \to M.
\end{equation}
Then each $\rho^{n,0}$ gives the Hom-module $M$ the structure of an $A$-module.
\end{theorem}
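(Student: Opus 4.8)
The plan is to verify directly that the pair $(M,\alpha_M)$, equipped with the new structure map $\rho^{n,0}=\rho\circ(\alpha_A^n\otimes Id_M)$, satisfies the two $A$-module axioms in \eqref{eq:moduleaxiom}: the multiplicativity $\alpha_M\circ\rho^{n,0}=\rho^{n,0}\circ(\alpha_A\otimes\alpha_M)$ and the Hom-associativity $\rho^{n,0}\circ(\alpha_A\otimes\rho^{n,0})=\rho^{n,0}\circ(\mu\otimes\alpha_M)$. Note that here the underlying Hom-associative algebra $(A,\mu,\alpha_A)$ is \emph{not} changed (only the action is twisted), so unlike in Theorem \ref{thm:secondtp} there is no need to pass to a derived algebra or to argue by induction on $n$; instead I would treat all $n$ simultaneously by pushing the powers of $\alpha_A$ through the structure maps. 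The only inputs I expect to use are the two module axioms \eqref{eq:moduleaxiom} for $\rho$ together with the iterated multiplicativity $\alpha_A^n\circ\mu=\mu\circ(\alpha_A^n\otimes\alpha_A^n)$, which follows from the multiplicativity of $\alpha_A$ in \eqref{homassaxioms} applied $n$ times.

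For multiplicativity I would expand $\alpha_M\circ\rho^{n,0}=\alpha_M\circ\rho\circ(\alpha_A^n\otimes Id_M)$, apply the original module multiplicativity $\alpha_M\circ\rho=\rho\circ(\alpha_A\otimes\alpha_M)$, and then absorb the extra $\alpha_A$ into $\alpha_A^n$ to obtain $\rho\circ(\alpha_A^{n+1}\otimes\alpha_M)$. Computing the right-hand side gives $\rho^{n,0}\circ(\alpha_A\otimes\alpha_M)=\rho\circ(\alpha_A^n\otimes Id_M)\circ(\alpha_A\otimes\alpha_M)=\rho\circ(\alpha_A^{n+1}\otimes\alpha_M)$, the same map, so the two sides agree.

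The substantive step is Hom-associativity. Expanding the left-hand side, the outer copy of $\alpha_A^n$ coming from the outer $\rho^{n,0}$ combines with the explicit $\alpha_A$ to act as $\alpha_A^{n+1}$ on the first tensor factor, while the inner $\rho^{n,0}$ contributes an $\alpha_A^n$ on the second factor, so $\rho^{n,0}\circ(\alpha_A\otimes\rho^{n,0})=\rho\circ(\alpha_A\otimes\rho)\circ(\alpha_A^n\otimes\alpha_A^n\otimes Id_M)$. On the right-hand side, $\rho^{n,0}\circ(\mu\otimes\alpha_M)=\rho\circ\bigl((\alpha_A^n\circ\mu)\otimes\alpha_M\bigr)$, and the iterated multiplicativity $\alpha_A^n\circ\mu=\mu\circ(\alpha_A^n\otimes\alpha_A^n)$ rewrites this as $\rho\circ(\mu\otimes\alpha_M)\circ(\alpha_A^n\otimes\alpha_A^n\otimes Id_M)$. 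Both sides are thus the original module Hom-associativity $\rho\circ(\alpha_A\otimes\rho)=\rho\circ(\mu\otimes\alpha_M)$ pre-composed with $\alpha_A^n\otimes\alpha_A^n\otimes Id_M$ — equivalently, the original axiom applied to the $\alpha_A^n$-twisted algebra arguments — and so they coincide. I expect the only real obstacle to be the index bookkeeping: confirming that the first slot receives exactly $\alpha_A^{n+1}$ (and not $\alpha_A^n$) while the second stays at $\alpha_A^n$, and correctly invoking iterated multiplicativity to pre-twist the product $\mu$ on the right. A diagrammatic version in the style of \eqref{a1homass} would record the same argument, with the relevant squares commuting by the module axioms and by multiplicativity, respectively.
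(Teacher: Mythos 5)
Your proof is correct and uses exactly the same ingredients as the paper's: the two module axioms for $\rho$ together with the multiplicativity of $\alpha_A$ with respect to $\mu$, with both sides of each axiom reducing to the original axiom precomposed with powers of $\alpha_A$. The only organizational difference is that the paper reduces to the case $n=1$ by induction (using $\rho^{n+1,0}=(\rho^{n,0})^{1,0}$) and then argues by commutative diagrams, whereas you handle all $n$ at once via the iterated multiplicativity $\alpha_A^n\circ\mu=\mu\circ(\alpha_A^n\otimes\alpha_A^n)$; this is an inessential variation.
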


Note that if $\rho(a,m)$ is written as $am$, then $\rho^{n,0}(a,m) = \alpha_A^n(a)m$.  The meaning of the $0$ in $\rho^{n,0}$ will be made clear in Corollary \ref{cor:twisthommodule} below.

\begin{proof}
Since $\rho^{0,0} = \rho$, $\rho^{1,0} = \rho \circ (\alpha_A \otimes Id_M)$, and
\[
\rho^{n+1,0} = \rho^{n,0} \circ (\alpha_A \otimes Id_M) = (\rho^{n,0})^{1,0},
\]
by an induction argument it suffices to prove the Theorem for the case $n = 1$.  In other words, we need to prove that $\rho^{1,0} = \rho \circ (\alpha_A \otimes Id_M)$ gives $M$ the structure of an $A$-module.  To check this, first observe that the multiplicativity \eqref{eq:moduleaxiom} of $\rho^{1,0}$ says
\[
\alpha_M \circ \rho^{1,0} = \rho^{1,0} \circ (\alpha_A \otimes \alpha_M).
\]
This is true by the following commutative diagram:
\[
\nicearrow
\xymatrix{
A \otimes M \ar[rr]^-{\alpha_A \otimes \alpha_M} \ar[d]_-{\alpha_A \otimes Id_M} & & A \otimes M \ar[d]^-{\alpha_A \otimes Id_M}\\
A \otimes M \ar[d]_-{\rho} \ar[rr]^-{\alpha_A \otimes \alpha_M} & & A \otimes M \ar[d]^-{\rho}\\
M \ar[rr]_-{\alpha_M} & & M.
}
\]
The top rectangle is commutative because both compositions are equal to $\alpha_A^2 \otimes \alpha_M$.  The bottom rectangle is commutative by the multiplicativity of $\rho$.

Likewise, the Hom-associativity \eqref{eq:moduleaxiom} of $\rho^{1,0}$ says
\[
\rho^{1,0} \circ (\alpha_A \otimes \rho^{1,0}) = \rho^{1,0} \circ (\mu_A \otimes \alpha_M).
\]
This is true by the following commutative diagram:
\[
\nicearrow
\xymatrix{
A \otimes A \otimes M \ar[rrrr]^-{\mu_A \otimes \alpha_M} \ar[d]_-{\alpha_A \otimes \alpha_A \otimes Id_M} & & & & A \otimes M \ar[d]^-{\alpha_A \otimes Id_M}\\
A \otimes A \otimes M \ar@{=}[rr] \ar[d]_-{Id_A \otimes \rho} & & A \otimes A \otimes M \ar[rr]^-{\mu_A \otimes \alpha_M} \ar[d]_-{\alpha_A \otimes \rho} & & A \otimes M \ar[d]^-{\rho}\\
A \otimes M \ar[rr]_-{\alpha_A \otimes Id_M} & & A \otimes M \ar[rr]_-{\rho} & & M.
}
\]
Here the left vertical composition is $\alpha_A \otimes \rho^{1,0}$.  The right vertical and the bottom compositions are both equal to $\rho^{1,0}$.  The top rectangle is commutative by the multiplicativity of $\alpha_A$ \eqref{homassaxioms}.  The bottom left rectangle is commutative by definition, while the bottom right rectangle is commutative by the Hom-associativity of $\rho$ \eqref{eq:moduleaxiom}.
\end{proof}

Note that in Theorem \ref{thm:twisthommodule}, only the structure map $\rho$ is twisted (by $\alpha_A^n \otimes Id_M$), while $A$ and $M$ remain unchanged.  The following result is another version of the second Twisting Principle \ref{twistingprinciple} for modules over a Hom-associative algebra, where $A$, $M$, and the structure map are all twisted simultaneously.

\begin{theorem}
\label{thm2:twisthommodule}
Let $(A,\mu,\alpha_A)$ be a Hom-associative algebra and $(M,\alpha_M)$ be an $A$-module with structure map $\rho \colon A \otimes M \to M$.  For each integer $k \geq 0$, define the map
\[
\rho^{0,k} = \alpha_M^{2^k-1} \circ \rho \colon A \otimes M \to M.
\]
Then each $\rho^{0,k}$ gives the Hom-module $M^k=(M,\alpha_M^{2^k})$ the structure of an $A^k$-module, where $A^k$ is the $k$th derived Hom-associative algebra $(A,\mu^{(k)} = \alpha_A^{2^k-1} \circ \mu, \alpha_A^{2^k})$ in Theorem \ref{thm:secondtp}.
\end{theorem}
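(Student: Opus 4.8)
The plan is to follow the same template used for Theorems~\ref{thm:secondtp} and \ref{thm:twisthommodule}: reduce the general statement to the single case $k=1$ by an induction argument, and then verify the two $A^1$-module axioms \eqref{eq:moduleaxiom} for $\rho^{0,1} = \alpha_M \circ \rho$ directly. First I would record the base identity $M^0 = M$ (since $\rho^{0,0} = \rho$ and $\alpha_M^{2^0} = \alpha_M$) and the recursion $M^{k+1} = (M^k)^1$, the latter being the crux of the reduction. It rests on two bookkeeping facts: that the twisting maps and algebras compose correctly, $(\alpha_M^{2^k})^2 = \alpha_M^{2^{k+1}}$ and $A^{k+1} = (A^k)^1$ (the last from Theorem~\ref{thm:secondtp}); and that the structure maps compose correctly,
\[
\alpha_M^{2^k} \circ \rho^{0,k} = \alpha_M^{2^k} \circ \alpha_M^{2^k - 1} \circ \rho = \alpha_M^{2^{k+1}-1} \circ \rho = \rho^{0,k+1},
\]
using $2^{k+1} - 1 = 2^k + (2^k - 1)$. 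Thus applying the $k=1$ construction to the $A^k$-module $M^k$ yields exactly the $A^{k+1}$-module $M^{k+1}$, and by induction it suffices to treat $k=1$.

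For the base case, the multiplicativity axiom for $\rho^{0,1}$ reads $\alpha_M^2 \circ \rho^{0,1} = \rho^{0,1} \circ (\alpha_A^2 \otimes \alpha_M^2)$, and both sides collapse to $\alpha_M^3 \circ \rho$: the left side immediately, and the right side after applying the multiplicativity of $\rho$ from \eqref{eq:moduleaxiom} twice to rewrite $\rho \circ (\alpha_A^2 \otimes \alpha_M^2)$ as $\alpha_M^2 \circ \rho$. The Hom-associativity axiom for $\rho^{0,1}$ reads
\[
\rho^{0,1} \circ (\alpha_A^2 \otimes \rho^{0,1}) = \rho^{0,1} \circ (\mu^{(1)} \otimes \alpha_M^2),
\]
and I would establish it by a commutative diagram in the spirit of \eqref{a1homass}. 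Tracing the left-hand side, one factors $\alpha_A^2 \otimes \rho^{0,1} = (\alpha_A^2 \otimes \alpha_M) \circ (Id_A \otimes \rho)$, pushes one copy of $\alpha_A$ through $\rho$ via the multiplicativity of $\rho$, and then invokes the Hom-associativity of $\rho$ (the second line of \eqref{eq:moduleaxiom}) to convert $\alpha_A(a)\cdot(bm)$ into $(ab)\cdot\alpha_M(m)$; a final application of multiplicativity of $\rho$ matches the result with the right-hand side, both compositions landing on the common value $\alpha_M^2\bigl((ab)\cdot \alpha_M(m)\bigr)$. Here $\mu^{(1)} = \alpha_A \circ \mu$ supplies precisely the extra $\alpha_A$ needed to balance the exponents, exactly as the multiplicativity of $\alpha_A$ with respect to $\mu$ from \eqref{homassaxioms} was used in Theorem~\ref{thm:secondtp}.

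The routine steps are the exponent arithmetic of the reduction and the two commutative diagrams. The main obstacle, such as it is, is the base-case Hom-associativity: one must apply the multiplicativity of $\rho$ on both the $A$-factor and the $M$-factor the correct number of times, so that the single use of the Hom-associativity of $\rho$ brings the two sides to the common value $\alpha_M^2\bigl((ab)\cdot\alpha_M(m)\bigr)$. Once the $k=1$ case is in hand, the recursion $M^{k+1}=(M^k)^1$ closes the argument by induction.
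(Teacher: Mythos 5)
Your proposal is correct and follows essentially the same route as the paper: reduce to $k=1$ via the recursion $M^{k+1}=(M^k)^1$ (with the exponent identity $2^{k+1}-1 = 2^k + (2^k-1)$), then verify the two axioms for $\rho^{0,1}=\alpha_M\circ\rho$ by repeated use of the multiplicativity of $\rho$ together with one application of its Hom-associativity. The paper packages these verifications as commutative diagrams, but the underlying computation — both sides of the Hom-associativity axiom collapsing to $\alpha_M^2\circ\rho\circ(\mu\otimes\alpha_M)$ — is exactly what you describe.
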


\begin{proof}
Since $\rho^{0,0} = \rho$, $\rho^{0,1} = \alpha_M \circ \rho$, $\rho^{0,k+1} = \alpha_M^{2^k} \circ \rho^{0,k}$, $M^{k+1} = (M^k)^1$, and $A^{k+1} = (A^k)^1$, by an induction argument it suffices to prove the case $k=1$.  In other words, it suffices to prove that $\rho^{0,1} = \alpha_M \circ \rho$ gives $M^1=(M,\alpha_M^2)$ the structure of an $A^1$-module, where $A^1=(A,\mu^{(1)}=\alpha_A\circ\mu,\alpha_A^2)$.

First, the multiplicativity \eqref{eq:moduleaxiom} in this case says
\[
\alpha_M^2 \circ \rho^{0,1} = \rho^{0,1} \circ (\alpha_A^2 \otimes \alpha_M^2).
\]
This is true by the following commutative diagram:
\[
\nicearrow
\xymatrix{
A \otimes M \ar[rr]^-{\rho} \ar[d]_-{\alpha_A \otimes \alpha_M} & & M \ar[rr]^-{\alpha_M} \ar[d]^-{\alpha_M} & & M \ar[d]^-{\alpha_M}\\
A \otimes M \ar[rr]^-{\rho} \ar[d]_-{\alpha_A \otimes \alpha_M} & & M \ar[d]^-{\alpha_M} & & M \ar[d]^-{\alpha_M}\\
A \otimes M \ar[rr]^-{\rho} & & M \ar[rr]^-{\alpha_M} & & M.
}
\]
The two rectangles on the left are commutative by the multiplicativity of $\alpha_A$ and $\alpha_M$ with respect to $\rho$, i.e., \eqref{eq:moduleaxiom} for the $A$-module $M$.

Next, the Hom-associativity \eqref{eq:moduleaxiom} in this case says
\[
\rho^{0,1} \circ (\alpha_A^2 \otimes \rho^{0,1}) = \rho^{0,1} \circ (\mu^{(1)} \otimes \alpha_M^2).
\]
This is true by the following commutative diagram:
\[
\nicearrow
\xymatrix{
A \otimes A \otimes M \ar[rr]^-{\mu \otimes \alpha_M} \ar[d]_-{\alpha_A \otimes \rho} & & A \otimes M \ar[rr]^-{\alpha_A \otimes \alpha_M} \ar[d]^-{\rho} & & A \otimes M \ar[d]^-{\rho}\\
A \otimes M \ar[rr]^-{\rho} \ar[d]_-{\alpha_A \otimes \alpha_M} & & M \ar[rr]^-{\alpha_M} \ar[d]^-{\alpha_M} & & M \ar[d]^-{\alpha_M}\\
A \otimes M \ar[rr]^-{\rho} & & M \ar[rr]^-{\alpha_M} & & M.
}
\]
The lower left and the upper right rectangles are commutative by the multiplicativity for the $A$-module $M$.  The upper left rectangle is commutative by the Hom-associativity for the $A$-module $M$.
\end{proof}

Starting with a given module over a Hom-associative algebra, we now combine Theorems \ref{thm:twisthommodule} and \ref{thm2:twisthommodule} to obtain a derived double-sequence of modules.

\begin{corollary}
\label{cor:twisthommodule}
Let $(A,\mu,\alpha_A)$ be a Hom-associative algebra and $(M,\alpha_M)$ be an $A$-module with structure map $\rho \colon A \otimes M \to M$.  For any integers $n,k \geq 0$, define the map
\[
\rho^{n,k} = \alpha_M^{2^k-1} \circ \rho \circ (\alpha_A^n \otimes Id_M) \colon A \otimes M \to M.
\]
Then $\rho^{n,k}$ gives the Hom-module $M^k=(M,\alpha_M^{2^k})$ the structure of an $A^k$-module, where $A^k$ is the $k$th derived Hom-associative algebra $(A,\mu^{(k)} = \alpha_A^{2^k-1} \circ \mu, \alpha_A^{2^k})$ in Theorem \ref{thm:secondtp}.
\end{corollary}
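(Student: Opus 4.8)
The plan is to realize $\rho^{n,k}$ as the composite of the two twisting operations already established in Theorems \ref{thm:twisthommodule} and \ref{thm2:twisthommodule}, applied in the correct order. The key algebraic observation is that the structure map factors as
\[
\rho^{n,k} = \alpha_M^{2^k-1} \circ \rho \circ (\alpha_A^n \otimes Id_M) = \alpha_M^{2^k-1} \circ \rho^{n,0},
\]
where $\rho^{n,0} = \rho \circ (\alpha_A^n \otimes Id_M)$ is exactly the twisted structure map from Theorem \ref{thm:twisthommodule}. In other words, writing $(-)^{0,k}$ for the operation of Theorem \ref{thm2:twisthommodule}, we have the identity $\rho^{n,k} = (\rho^{n,0})^{0,k}$, and the entire argument will rest on this factorization.

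First I would invoke Theorem \ref{thm:twisthommodule} to conclude that $\rho^{n,0}$ endows the Hom-module $(M,\alpha_M)$ with the structure of an $A$-module. The crucial point here is that this application disturbs neither the underlying Hom-associative algebra $A$ nor the twisting map $\alpha_M$ of $M$: only the action is twisted, by $\alpha_A^n \otimes Id_M$. Thus after this first step we are still in the setting of genuine $A$-modules, which is precisely the input required by the second theorem.

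Next I would apply Theorem \ref{thm2:twisthommodule} to the $A$-module $(M,\alpha_M,\rho^{n,0})$ obtained above. That theorem produces the structure map $(\rho^{n,0})^{0,k} = \alpha_M^{2^k-1} \circ \rho^{n,0}$, which by the factorization above equals $\rho^{n,k}$, and it asserts that this map gives $M^k = (M,\alpha_M^{2^k})$ the structure of an $A^k$-module, where $A^k = (A,\alpha_A^{2^k-1}\circ\mu,\alpha_A^{2^k})$ is the $k$th derived Hom-associative algebra from Theorem \ref{thm:secondtp}. Since $A^k$ depends only on $A$ and not on the chosen action, it is the same object whether built from $\rho$ or from $\rho^{n,0}$, so the target is exactly the one claimed in the statement, completing the argument.

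The argument is essentially formal, so I expect no genuine obstacle. The one point demanding care is the order of the two operations: Theorem \ref{thm:twisthommodule} twists by $\alpha_A^n$, the $n$th power of the \emph{original} twisting map of $A$, so it must be performed while we are still working over $A$ itself. Were one to apply Theorem \ref{thm2:twisthommodule} first, the subsequent twist of the resulting $A^k$-module would involve powers of $\alpha_A^{2^k}$ rather than $\alpha_A^n$ and would not reproduce $\rho^{n,k}$. Carrying out the two steps in the stated order sidesteps this issue entirely, and no independent diagram chase is needed.
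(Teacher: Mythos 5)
Your proof is correct and is essentially identical to the paper's: the paper likewise observes that $\rho^{n,k} = (\rho^{n,0})^{0,k}$ and obtains the result by applying Theorem \ref{thm2:twisthommodule} to the $A$-module with structure map $\rho^{n,0}$ furnished by Theorem \ref{thm:twisthommodule}. Your closing remark about the necessity of performing the $\alpha_A^n$-twist first is a sensible clarification, though the paper leaves it implicit.
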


\begin{proof}
Apply Theorem \ref{thm2:twisthommodule} to the $A$-module $M$ with structure map $\rho^{n,0} = \rho \circ (\alpha_A^n \otimes Id_M)$ in Theorem \ref{thm:twisthommodule}, and observe that $\rho^{n,k} = (\rho^{n,0})^{0,k}$.
\end{proof}

If $\rho(a,m) = am$, then $\rho^{n,k}(a,m) = \alpha_M^{2^k-1}(\alpha_A^n(a)m)$.  Notice that our notations for $\rho^{*,*}$ are consistent, in the sense that $\rho^{n,0}$ has the same meaning in Corollary \ref{cor:twisthommodule} and Theorem \ref{thm:twisthommodule}.  Likewise, $\rho^{0,k}$ has the same meaning in Corollary \ref{cor:twisthommodule} and Theorem \ref{thm2:twisthommodule}.  In the context of Corollary \ref{cor:twisthommodule}, we call the $A^k$-module $M^k$ with the structure map $\rho^{n,k}$ the $(n,k)$-\textbf{derived module} of $M$.

Using Corollary \ref{cor:twisthommodule} we now prove a version of the first Twisting Principle \ref{twistingprinciple} for modules in the usual sense.

\begin{theorem}
\label{thm:twistmodule}
Let $(A,\mu_A)$ be an associative algebra and $M$ be an $A$-module in the usual sense with structure map $\rho \colon A \otimes M \to M$.  Suppose $\alpha_A \colon A \to A$ is an algebra morphism and $\alpha_M \colon M \to M$ is a linear self-map such that
\begin{equation}
\label{alphamrho}
\alpha_M \circ \rho = \rho \circ (\alpha_A \otimes \alpha_M).
\end{equation}
For any integers $n,k \geq 0$, define the map
\[
\rhoalpha^{n,k} = \alpha_M^{2^k} \circ \rho \circ (\alpha_A^{n} \otimes Id_M) \colon A \otimes M \to M.
\]
Then each $\rhoalpha^{n,k}$ gives the Hom-module $M^k_\alpha = (M,\alpha_M^{2^k})$ the structure of an $A_\beta$-module, where $\beta = \alpha_A^{2^k}$ and $A_\beta$ is the Hom-associative algebra $(A,\mu_\beta = \beta \circ \mu_A,\beta)$ in Theorem \ref{thm:hombialg}.
\end{theorem}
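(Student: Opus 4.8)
The plan is to reduce the entire statement to the second Twisting Principle for modules, namely Corollary \ref{cor:twisthommodule}, by first handling the base case $(n,k)=(0,0)$. The key observation is that the hypotheses---$\alpha_A$ an algebra morphism, $\alpha_M$ compatible with $\rho$ via \eqref{alphamrho}, and $M$ an ordinary $A$-module---are precisely what is needed to produce a genuine Hom-module over the Hom-associative algebra $A_{\alpha_A} = (A,\mu_{\alpha_A}=\alpha_A\circ\mu_A,\alpha_A)$ of Theorem \ref{thm:hombialg}. So the first step is to show that $(M,\alpha_M)$, equipped with the twisted structure map $\tilde\rho = \alpha_M\circ\rho = \rhoalpha^{0,0}$, is an $A_{\alpha_A}$-module in the sense of Definition \ref{def:hommodule}.

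To verify this base case I would check the two axioms \eqref{eq:moduleaxiom}. For multiplicativity, $\alpha_M\circ\tilde\rho = \alpha_M^2\circ\rho$, while $\tilde\rho\circ(\alpha_A\otimes\alpha_M) = \alpha_M\circ\rho\circ(\alpha_A\otimes\alpha_M)$, and one application of \eqref{alphamrho} turns the latter into $\alpha_M^2\circ\rho$ as well. For Hom-associativity, expanding $\tilde\rho\circ(\alpha_A\otimes\tilde\rho)$ and using \eqref{alphamrho} to pull $\alpha_M$ to the outside gives $\alpha_M^2\circ\rho\circ(Id_A\otimes\rho)$; expanding $\tilde\rho\circ(\mu_{\alpha_A}\otimes\alpha_M)$ and again applying \eqref{alphamrho} gives $\alpha_M^2\circ\rho\circ(\mu_A\otimes Id_M)$. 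These two expressions agree by the ordinary module associativity $\rho\circ(Id_A\otimes\rho)=\rho\circ(\mu_A\otimes Id_M)$ of the untwisted $A$-module $M$. This establishes the $(0,0)$ case.

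With the $A_{\alpha_A}$-module $(M,\alpha_M,\tilde\rho)$ in hand, the second step is simply to invoke Corollary \ref{cor:twisthommodule}, whose twisting map on the algebra is $\alpha_A$ and on the module is $\alpha_M$. The corollary produces, for all $n,k\ge 0$, the structure map $\tilde\rho^{n,k} = \alpha_M^{2^k-1}\circ\tilde\rho\circ(\alpha_A^n\otimes Id_M)$ making $(M,\alpha_M^{2^k})$ a module over the $k$th derived Hom-associative algebra $(A_{\alpha_A})^k$. A direct substitution $\tilde\rho=\alpha_M\circ\rho$ gives $\tilde\rho^{n,k} = \alpha_M^{2^k}\circ\rho\circ(\alpha_A^n\otimes Id_M) = \rhoalpha^{n,k}$, which is exactly the structure map in the statement. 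Finally, Remark \ref{rk:derivedhomalg}(3) identifies $(A_{\alpha_A})^k = A_{\alpha_A^{2^k}} = A_\beta$ with $\beta=\alpha_A^{2^k}$, and the module twisting map $\alpha_M^{2^k}$ shows the underlying Hom-module is $M^k_\alpha=(M,\alpha_M^{2^k})$, completing the proof.

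I do not expect a genuine obstacle: once the base case is in place, the result is a formal consequence of Corollary \ref{cor:twisthommodule} together with the identification of derived Hom-associative algebras in Remark \ref{rk:derivedhomalg}(3). The only real computation is the base-case Hom-associativity check, and there the single subtlety is to apply the compatibility \eqref{alphamrho} at the correct places so that both sides collapse, via ordinary associativity of the untwisted module, to the common expression $\alpha_M^2\circ\rho\circ(\mu_A\otimes Id_M)$.
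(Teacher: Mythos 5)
Your proposal is correct and follows essentially the same route as the paper: reduce to the case $n=k=0$ via Corollary \ref{cor:twisthommodule} and the identification $(A_{\alpha_A})^k = A_{\alpha_A^{2^k}}$ from Remark \ref{rk:derivedhomalg}, then verify the two axioms of \eqref{eq:moduleaxiom} for $\alpha_M\circ\rho$ using \eqref{alphamrho} together with the ordinary associativity of the untwisted module. The only difference is presentational: the paper argues by commutative diagrams where you argue by equational manipulation.
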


\begin{proof}
Using Corollary \ref{cor:twisthommodule}, it suffices to prove the case $n=k=0$, i.e., that $\rhoalpha = \rhoalpha^{0,0} = \alpha_M\circ\rho$ gives $M_\alpha = (M,\alpha_M)$ the structure of an $A_\alpha$-module, where $A_\alpha$ is the Hom-associative algebra $(A,\mu_\alpha=\alpha_A\circ\mu_A,\alpha_A)$.  Indeed, once this is proved, we can then apply Corollary \ref{cor:twisthommodule} to the $A_\alpha$-module $M_\alpha$, and observe that
\begin{equation}
\label{rhoalphank}
\begin{split}
(\rhoalpha)^{n,k} &= \alpha_M^{2^k-1} \circ \rhoalpha \circ (\alpha_A^n \otimes Id_M)\\
&= \alpha_M^{2^k} \circ \rho \circ (\alpha_A^{n} \otimes Id_M)\\
&= \rhoalpha^{n,k}
\end{split}
\end{equation}
and that $(A_\alpha)^k = A_{\alpha^{2^k}} = A_\beta$ (Remark \ref{rk:derivedhomalg}).

To prove that $M_\alpha$ is an $A_\alpha$-module with structure map $\rhoalpha = \alpha_M \circ \rho$, we need to prove the two axioms in \eqref{eq:moduleaxiom}.  First, the multiplicativity for $\rhoalpha$ says
\[
\alpha_M \circ \rho_\alpha = \rho_\alpha \circ (\alpha_A \otimes \alpha_M).
\]
This is true by the following commutative diagram:
\[
\nicearrow
\xymatrix{
A \otimes M \ar[d]_-{\alpha_A \otimes \alpha_M} \ar[rr]^-{\rho} & & M \ar[rr]^-{\alpha_M} \ar[d]^-{\alpha_M} & & M \ar[d]^-{\alpha_M}\\
A \otimes M \ar[rr]^-{\rho} & & M \ar[rr]^-{\alpha_M} & & M.
}
\]
The left rectangle is commutative by the assumption \eqref{alphamrho}.  On the other hand, the Hom-associativity for $\rhoalpha$ says
\[
\rho_\alpha \circ (\alpha_A \otimes \rho_\alpha) = \rho_\alpha \circ (\mu_\alpha \otimes \alpha_M).
\]
This is true by the following commutative diagram:
\[
\nicearrow
\xymatrix{
A \otimes A \otimes M \ar[rr]^-{\mu_A \otimes Id_M} \ar[d]_-{Id_A \otimes \rho} & & A \otimes M \ar[rr]^-{\alpha_A \otimes \alpha_M} \ar[d]_-{\rho} & & A \otimes M \ar[d]^-{\rho}\\
A \otimes M \ar[rr]^-{\rho} \ar[d]_-{\alpha_A \otimes \alpha_M} & & M \ar[rr]^-{\alpha_M} \ar[d]_-{\alpha_M} & & M \ar[d]^-{\alpha_M}\\
A \otimes M \ar[rr]^-{\rho} & & M \ar[rr]^-{\alpha_M} & & M.
}
\]
Here the top horizontal and the left vertical compositions are $\mu_\alpha \otimes \alpha_M$ and $\alpha_A \otimes \rho_\alpha$, respectively.  The top left rectangle is commutative because $M$ is an $A$-module in the usual sense.  The top right and the bottom left rectangles are commutative by the assumption \eqref{alphamrho}.  We have shown that $M_\alpha$ is an $A_\alpha$-module with structure map $\rhoalpha$.
\end{proof}

\begin{definition}
\label{def:derivedmodule}
In the setting of Theorem \ref{thm:twistmodule}, the $A_{\alpha^{2^k}}$-module $M_\alpha^k = (M,\alpha_M^{2^k})$ with the structure map $\rhoalpha^{n,k} = \alpha_M^{2^k} \circ \rho \circ (\alpha_A^{n} \otimes Id_M)$ is called the $(n,k)$-\textbf{derived module} of $M$ with respect to $(\alpha_A,\alpha_M)$, denoted $M^{n,k}_\alpha$.
\end{definition}

Theorem \ref{thm:twistmodule} says that, starting with an $A$-module $M$ in the usual sense, every pair of maps $(\alpha_A,\alpha_M)$ satisfying \eqref{alphamrho} gives rise to a derived double-sequence of modules $\{M^{n,k}_\alpha\}_{n,k\geq 0}$ over Hom-associative algebras.  If we write $\rho(a,m)$ as $am$, then the condition \eqref{alphamrho} says that $\alpha_M(am) = \alpha_A(a)\alpha_M(m)$ for all $a \in A$ and $m \in M$.  So the $(n,k)$-derived module $M_\alpha^{n,k}$ has structure map:
\[
\begin{split}
\rhoalpha^{n,k}(a,m) &= \alpha_M^{2^k}\left(\alpha^{n}_A(a)m\right)\\
&= \alpha_A^{2^k+n}(a)\alpha_M^{2^k}(m).
\end{split}
\]
Theorem \ref{thm:twistmodule} is our main tool for constructing examples of modules over Hom-quantum groups.  To use it, we need to check the condition \eqref{alphamrho}.  The following labor-saving result says that we really only need to check \eqref{alphamrho} for algebra generators of $A$ and basis elements of $M$.

\begin{proposition}
\label{prop:twistmodule}
Let $(A,\mu_A)$ be an associative algebra and $M$ be an $A$-module in the usual sense with structure map $\rho \colon A \otimes M \to M$.  Let $\alpha_A \colon A \to A$ be an algebra morphism and $\alpha_M \colon M \to M$ be a linear self-map.  Suppose that $X = \{a_i\}$ is a set of algebra generators of $A$ and that $\{m_j\}$ is a $\bk$-linear basis of $M$ such that
\begin{equation}
\label{alphamrho'}
\alpha_M(a_im_j) = \alpha_A(a_i)\alpha_M(m_j)
\end{equation}
for all $a_i \in X$ and $m_j \in M$.  Then \eqref{alphamrho} holds, i.e.,
\[
\alpha_M(am) = \alpha_A(a)\alpha_M(m)
\]
for all $a \in A$ and $m \in M$.
\end{proposition}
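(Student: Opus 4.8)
The plan is to verify \eqref{alphamrho} by a two-stage generation argument: first extend \eqref{alphamrho'} from the basis elements $m_j$ to arbitrary $m \in M$ (keeping $a$ among the generators) using linearity in $m$, and then extend from the generators $a_i$ to arbitrary $a \in A$ by showing that the set of elements satisfying the identity is a $\bk$-subalgebra of $A$.

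First I would fix a generator $a_i \in X$ and consider the map $m \mapsto \alpha_M(a_im) - \alpha_A(a_i)\alpha_M(m)$. Since $\alpha_M$ is $\bk$-linear and the action $m \mapsto a_im$ is $\bk$-linear, this map is $\bk$-linear in $m$; by \eqref{alphamrho'} it vanishes on the basis $\{m_j\}$ and hence vanishes identically. Thus the identity $\alpha_M(am) = \alpha_A(a)\alpha_M(m)$ already holds whenever $a \in X$ and $m \in M$ is arbitrary.

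For the second stage I would introduce the set
\[
S = \{a \in A : \alpha_M(am) = \alpha_A(a)\alpha_M(m) \text{ for all } m \in M\}
\]
and prove that $S = A$. Both sides of the defining identity are $\bk$-linear in $a$ (using that $\alpha_A$ is linear), so $S$ is a $\bk$-submodule of $A$, and it contains the unit because $\alpha_A(1) = 1$. The crucial step is closure under multiplication: given $a,b \in S$ and any $m \in M$, I would compute
\[
\alpha_M((ab)m) = \alpha_M(a(bm)) = \alpha_A(a)\alpha_M(bm) = \alpha_A(a)\alpha_A(b)\alpha_M(m) = \alpha_A(ab)\alpha_M(m),
\]
where the first equality is associativity of the $A$-action, the second applies $a \in S$ to the element $bm$, the third applies $b \in S$ to $m$ together with associativity of the action, and the last uses that $\alpha_A$ is an algebra morphism. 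Hence $ab \in S$, so $S$ is a $\bk$-subalgebra of $A$ containing the generating set $X$ by the first stage; therefore $S$ contains the subalgebra generated by $X$, which is all of $A$, giving $S = A$, i.e.\ \eqref{alphamrho}.

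The only genuine content is the multiplicative closure of $S$, and its verification rests on precisely the two structural hypotheses—that $M$ is an honest (associative) $A$-module and that $\alpha_A$ is multiplicative and unital—so I expect that computation to be the heart of the argument, with everything else a routine linearity-and-generation reduction.
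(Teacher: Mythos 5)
Your proposal is correct and is essentially the paper's own argument: your set $S$ is exactly the collection of elements $a$ for which the paper's condition $C_a$ holds, and the closure-under-multiplication computation is identical, as is the reduction via linearity in $m$ and in $a$. The only cosmetic difference is your explicit mention of the unit, which the paper does not need since it works directly with the generating set.
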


\begin{proof}
For an element $a \in A$, we say that $C_a$ holds if $\alpha_M(am) = \alpha_A(a)\alpha_M(m)$ for all $m \in M$.  We must show that $C_a$ holds for all $a \in A$.  Since $\{m_j\}$ is a linear basis of $M$, it follows from the assumption \eqref{alphamrho'} and the linearity of $\alpha_M$ that $C_{a_i}$ holds for each $a_i \in X$.  Since $X = \{a_i\}$ is a set of algebra generators of $A$, it remains to show the following statements:
\begin{enumerate}
\item
If $C_x$ holds and $c \in \bk$, then $C_{cx}$ holds.
\item
If $C_x$ and $C_y$ hold, then so does $C_{x+y}$.
\item
If $C_x$ and $C_y$ hold, then so does $C_{xy}$.
\end{enumerate}
The first two statements are immediate from the definition of $C_a$.  For the last statement, pick an arbitrary element $m \in M$.  Using $C_x$, $C_y$, and the fact that $M$ is an $A$-module, we have
\[
\begin{split}
\alpha_M((xy)m) &= \alpha_M(x(ym))\\
&= \alpha_A(x)\alpha_M(ym) \quad\text{by $C_x$}\\
&= \alpha_A(x)\left(\alpha_A(y)\alpha_M(m)\right) \quad\text{by $C_y$}\\
&= \left(\alpha_A(x)\alpha_A(y)\right)\alpha_M(m)\\
&= \alpha_A(xy)\alpha_M(m).
\end{split}
\]
In the last equality above, we used the assumption that $\alpha_A$ is an algebra morphism.  Thus, $C_{xy}$ holds, as desired.
\end{proof}

Using Theorem \ref{thm:twistmodule} and Proposition \ref{prop:twistmodule}, we now construct multi-parameter classes of modules over the Hom-quantum enveloping algebras $\uqsln_\al$ (Examples \ref{ex:uqg}).

\begin{example}[\textbf{Finite dimensional modules over $\uq_\alpha$}]
\label{ex:uqmodule}
Recall from Example \ref{ex:uqg} the Hom-quantum enveloping algebra $\uq_\al$, where the bialgebra morphism $\al \colon \uq \to \uq$ is defined on the algebra generators by
\begin{equation}
\label{alphaefk}
\al(E) = \lambda E, \quad \al(F) = \lambda^{-1}F, \quad\text{and}\quad \al(K^{\pm 1}) = K^{\pm 1}
\end{equation}
for a fixed invertible scalar $\lambda \in \bC$.  In this example, we construct four-parameter, uncountable classes of finite-dimensional derived modules over the Hom-quantum enveloping algebras of $\sltwo$ by applying Theorem \ref{thm:twistmodule} to finite-dimensional simple $\uq$-modules.  The references for these simple $\uq$-modules are \cite{lusztig,rosso88}.

Assume that $q \in \bC \setminus \{0\}$ is not a root of unity.  For each integer $n \geq 0$ and $\epsilon \in \{\pm 1\}$, there is an $(n+1)$-dimensional simple $\uq$-module $V(\epsilon,n)$.  Let $\{v_i\}_{i=0}^n$ be a basis of $V(\epsilon,n)$.  Then the $\uq$-module action $\rho$ on $V(\epsilon,n)$ is determined by
\begin{equation}
\label{kefv}
Kv_i = \epsilon q^{n-2i}v_i,\quad
Ev_i = \epsilon [n-i+1]_q v_{i-1},\quad\text{and}\quad
Fv_i = [i+1]_q v_{i+1},
\end{equation}
where $v_{-1} = 0 = v_{n+1}$ and the $q$-integers were as defined in \eqref{qinteger}.  To use Theorem \ref{thm:twistmodule} on the simple $\uq$-module $V(\epsilon,n)$, we need linear maps on $V(\epsilon,n)$ such that \eqref{alphamrho} is satisfied.  Such maps can be constructed as follows.

Pick any scalar $\xi \in \bC$, and define $\ax \colon V(\epsilon,n) \to V(\epsilon,n)$ by setting
\begin{equation}
\label{alphaxi}
\ax(v_i) = \xi\lambda^{-i}v_i
\end{equation}
for all $i$.  We claim that $\al$ and $\ax$ satisfy \eqref{alphamrho}, i.e.,
\begin{equation}
\label{axuv}
\ax(uv) = \al(u)\ax(v)
\end{equation}
for all $u \in \uq$ and $v \in V(\epsilon,n)$.  By Proposition \ref{prop:twistmodule} we only need to check \eqref{axuv} for the algebra generators $u \in \{E,F,K^{\pm 1}\}$ of $\uq$ and for the basis elements $v \in \{v_i\}_{i=0}^n$ of $V(\epsilon,n)$.  When $u = K^{\pm 1}$, we have
\begin{equation}
\label{alphakvi}
\begin{split}
\ax(K^{\pm 1}v_i) &= \ax((\epsilon q^{n-2i})^{\pm 1} v_i)\\
&= \xi\lambda^{-i}(\epsilon q^{n-2i})^{\pm 1} v_i\\
&= K^{\pm 1}(\xi\lambda^{-i}v_i)\\
&= \al(K^{\pm 1})\ax(v_i).
\end{split}
\end{equation}
Likewise, when $u = E$, we have
\[
\begin{split}
\ax(Ev_i) &= \ax(\epsilon [n-i+1]_q v_{i-1})\\
&= \epsilon [n-i+1]_q \xi \lambda^{-(i-1)}v_{i-1}\\
&= (\lambda E)(\xi \lambda^{-i}v_i)\\
&= \al(E)\ax(v_i).
\end{split}
\]
Finally, when $u = F$, we have
\begin{equation}
\label{axfv}
\begin{split}
\ax(Fv_i) &= \ax([i+1]_q v_{i+1})\\
&= [i+1]_q \xi \lambda^{-(i+1)}v_{i+1}\\
&= (\lambda^{-1}F)(\xi \lambda^{-i}v_i)\\
&= \al(F)\ax(v_i).
\end{split}
\end{equation}
Therefore, \eqref{axuv} is satisfied.

By Theorem \ref{thm:twistmodule}, for any integers $r,k \geq 0$, there is an $(n+1)$-dimensional derived $\uqbeta$-module $V(\epsilon,n)_\alpha^{r,k} = (V(\epsilon,n),\ax^{2^k})$, where $\beta = \al^{2^k} = \alpha_{\lambda^{2^k}}$, with structure map
\[
\rhoalpha^{r,k} = \ax^{2^k} \circ \rho \circ (\al^r \otimes Id_{V(\epsilon,n)}) \colon \uq \otimes V(\epsilon,n) \to V(\epsilon,n).
\]
More explicitly, using \eqref{alphaefk}, \eqref{kefv}, and \eqref{alphaxi}, we have
\[
\begin{split}
\rhoalpha^{r,k}(K^{\pm 1}, v_i) &= (\epsilon q^{n-2i})^{\pm 1} (\xi \lambda^{-i})^{2^k}v_i,\\
\rhoalpha^{r,k}(E, v_i) &= \epsilon [n-i+1]_q \xi^{2^k} \lambda^{r-2^k(i-1)} v_{i-1},\quad\text{and}\\
\rhoalpha^{r,k}(F, v_i) &= [i+1]_q \xi^{2^k} \lambda^{-r-2^k(i+1)} v_{i+1}.
\end{split}
\]
Given $n \geq 0$, we have constructed an uncountable, four-parameter family
\[
\{V(\epsilon,n)_\alpha^{r,k} \colon \lambda \in \bC \setminus \{0\},\, \xi \in \bC,\, r,k \geq 0\}
\]
of $(n+1)$-dimensional derived modules over the Hom-quantum enveloping algebras $\uqbeta$.  We recover the original $(n+1)$-dimensional simple $\uq$-module $V(\epsilon,n)$ by taking $\lambda = \xi = 1$, since in this case both $\beta$ and $\ax$ are the identity maps.
\qed
\end{example}

\begin{example}[\textbf{Hom-Verma modules over $\uq_\alpha$}]
\label{ex:verma}
In this example, we construct a four-parameter, uncountable class of infinite-dimensional derived modules over the Hom-quantum enveloping algebras of $\sltwo$ by applying Theorem \ref{thm:twistmodule} to the Verma modules over $\uq$.  The references for the Verma modules are \cite{cp} (Chapter 10) and \cite{kassel} (Chapter VI).

We keep the notations of the previous example, so we are still working over $\bC$, and $q \in \bC \setminus \{0\}$ is not a root of unity.  Pick any non-zero scalar $\eta \in \bC$, and consider the infinite-dimensional $\bC$-module $\mqeta$ with a basis $\{v_i \colon i \geq 0\}$.  Then $\mqeta$ is a $\uq$-module, called the \emph{Verma module}, where the $\uq$-module action $\rho$ is determined by
\begin{equation}
\label{vermaaction}
\begin{split}
K^{\pm 1}v_i &= (\eta q^{-2i})^{\pm 1}v_i,\\
Fv_i &= [i+1]_q v_{i+1},\\
Ev_{i+1} &= \frac{q^{-i}\eta - q^i\eta^{-1}}{q - q^{-1}}v_i,\quad\text{and}\quad Ev_0 = 0.\\
\end{split}
\end{equation}
Define $\ax \colon \mqeta \to \mqeta$ as in \eqref{alphaxi}, i.e., $\ax(v_i) = \xi \lambda^{-i}v_i$.  We claim that \eqref{alphamrho} holds in this situation.  By Proposition \ref{prop:twistmodule} it suffices to check \eqref{alphamrho} for the algebra generators $\{E,F,K^{\pm 1}\}$ of $\uq$ and for the basis elements $v_i \in \mqeta$.  The necessary computation is similar to the previous example.  We have
\[
\begin{split}
\ax(Ev_{i+1}) &= \frac{q^{-i}\eta - q^i\eta^{-1}}{q - q^{-1}} \ax(v_i)\\
&= \xi \lambda^{-i} \frac{q^{-i}\eta - q^i\eta^{-1}}{q - q^{-1}}v_i\\
&= \xi \lambda^{-i} (Ev_{i+1})\\
&= (\lambda E)(\xi \lambda^{-(i+1)}v_{i+1})\\
&= \al(E)\ax(v_{i+1}).
\end{split}
\]
The computation proving $\ax(Fv_i) = \al(F)\ax(v_i)$ is identical to \eqref{axfv}.  Finally, to prove $\ax(K^{\pm 1}v_i) = \al(K^{\pm 1})\ax(v_i)$, one uses the computation \eqref{alphakvi} with $\epsilon q^{n-2i}$ replaced by $\eta q^{-2i}$.

Therefore, by Theorem \ref{thm:twistmodule}, for any integers $r,k \geq 0$, there is an infinite-dimensional $\uqbeta$-module $\mqeta^{r,k}_\alpha = (\mqeta,\ax^{2^k})$, where $\beta = \al^{2^k} = \alpha_{\lambda^{2^k}}$, with structure map
\[
\rhoalpha^{r,k} = \ax^{2^k} \circ \rho \circ (\al^r \otimes Id) \colon \uq \otimes \mqeta \to \mqeta.
\]
More explicitly, using \eqref{alphaefk}, \eqref{alphaxi}, and \eqref{vermaaction}, we have
\[
\begin{split}
\rhoalpha^{r,k}(K^{\pm 1},v_i) &= (\eta q^{-2i})^{\pm 1}(\xi \lambda^{-i})^{2^k}v_i,\\
\rhoalpha^{r,k}(E,v_{i+1}) &= \frac{q^{-i}\eta - q^i\eta^{-1}}{q - q^{-1}} \xi^{2^k} \lambda^{r-2^ki} v_i, \quad\text{and}\\
\rhoalpha^{r,k}(F,v_i) &= [i+1]_q \xi^{2^k} \lambda^{-r-2^k(i+1)} v_{i+1},
\end{split}
\]
where $v_{-1} = 0$.  Given a non-zero scalar $\eta \in \bC$, we have constructed an uncountable, four-parameter family
\[
\{\mqeta^{r,k}_\alpha \colon \lambda \in \bC \setminus\{0\},\, \xi \in \bC,\, r,k \geq 0\}
\]
of infinite-dimensional derived modules over the Hom-quantum enveloping algebras $\uqbeta$.  We recover the original Verma module $\mqeta$ by taking $\lambda = \xi = 1$.
\qed
\end{example}

\begin{example}[\textbf{Finite dimensional modules over $\uqsln_\alpha$}]
\label{ex:slnmodule}
Assume that $q \in \bC \setminus \{0\}$ is not a root of unity, and fix an integer $n \geq 2$.  In this example, we construct an uncountable, $(n+2)$-parameter family of $n$-dimensional derived $\uqsln_\alpha$-modules by twisting some standard $\uqsln$-modules \cite{hk} (Example 4.2.7).  Here $\uqsln_\alpha$ are the Hom-quantum enveloping algebras of $\sln$ constructed in Example \ref{ex:uqg}, whose Cartan matrix is described in \eqref{slcartan}. In particular, the quantum enveloping algebra $\uqsln$ is generated as a unital algebra by $\{E_i,F_i,K_i^{\pm 1}\}_{i=1}^{n-1}$ with relations \eqref{uqgrelations}.  The bialgebra morphism $\al \colon \uqsln \to \uqsln$ is defined in \eqref{alphalambdauqg}: $\al(K_i^{\pm 1}) = K_i^{\pm 1}$,
$\al(E_i) = \lambda_i E_i$, and $\al(F_i) = \lambda_i^{-1}F_i$.

Let $V_n$ be an $n$-dimensional $\bC$-module with a basis $\{v_i\}_{i=1}^n$.  It is a $\uqsln$-module whose structure map $\rho$ is determined by
\begin{equation}
\label{uqslnvn}
\begin{split}
E_i v_j &= \delta_{i+1,j}v_i, \quad
F_i v_j = \delta_{ij}v_{i+1},\quad\text{and}\\
K_i^{\pm 1} v_j &=
\begin{cases}
q^{\pm 1}v_i & \text{if $j=i$},\\
q^{\mp 1}v_{i+1} & \text{if $j=i+1$},\\
v_j & \text{otherwise}.
\end{cases}
\end{split}
\end{equation}
Fix a scalar $\xi \in \bC$.  Consider the linear map $\ax \colon V_n \to V_n$ defined by
\begin{equation}
\label{alphaxivn}
\ax(v_i) =
\xi(\lambda_1 \cdots \lambda_{i-1})^{-1}v_i,
\end{equation}
where, when $i=1$, the empty product $\lambda_1 \cdots \lambda_{i-1}$ is taken to mean $1$.  We claim that \eqref{alphamrho} holds, i.e., $\ax(uv) = \al(u)\ax(v)$ for all $u \in \uqsln$ and $v \in V_n$.  By Proposition \ref{prop:twistmodule} it suffices to check this for the algebra generators $u \in \{E_i,F_i,K_i^{\pm 1}\}_{i=1}^{n-1}$ of $\uqsln$ and for the basis elements $v \in \{v_i\}_{i=1}^n$ of $V_n$.  This is easily checked on a case-by-case basis as in the previous two examples.  In fact, we only need to consider the pairs $(u,v)$ where $uv\not= 0$ in \eqref{uqslnvn}.  We have
\[
\begin{split}
\ax(E_i v_{i+1}) &= \ax(v_i)\\
&= \xi(\lambda_1 \cdots \lambda_{i-1})^{-1}v_i\\
&= (\lambda_i E_i)(\xi(\lambda_1 \cdots \lambda_i)^{-1}v_{i+1})\\
&= \al(E_i)\ax(v_{i+1}).
\end{split}
\]
Likewise, we have
\[
\begin{split}
\ax(F_iv_i) &= \ax(v_{i+1})\\
&= \xi (\lambda_1 \cdots \lambda_i)^{-1}v_{i+1}\\
&= (\lambda_i^{-1}F_i)(\xi(\lambda_1 \cdots \lambda_{i-1})^{-1}v_i)\\
&= \al(F_i)\ax(v_i)
\end{split}
\]
Finally, for the case $u = K_i^{\pm 1}$ with a fixed $i \in \{1, \ldots , n-1\}$, set
\[
p_{ij} = \begin{cases}
q^{\pm 1} & \text{if $j=i$},\\
q^{\mp 1} & \text{if $j=i+1$},\\
1 & \text{otherwise}.
\end{cases}
\]
Then we have $K_i^{\pm 1}v_j = p_{ij}v_j$ and
\[
\begin{split}
\ax(K_i^{\pm 1}v_j) &= p_{ij} \xi (\lambda_1 \cdots \lambda_{j-1})^{-1}v_j\\
&= \al(K_i^{\pm 1})\ax(v_j).
\end{split}
\]

Therefore, by Theorem \ref{thm:twistmodule}, for any integers $r,k \geq 0$, there is an $n$-dimensional derived $\uqsln_\beta$-module $(V_n)^{r,k}_\alpha = (V_n,\ax^{2^k})$, where $\beta = \al^{2^k}$, with structure map
\[
\rhoalpha^{r,k} = \ax^{2^k} \circ \rho \circ (\al^r \otimes Id) \colon \uqsln \otimes V_n \to V_n.
\]
More explicitly, using \eqref{alphalambdauqg}, \eqref{uqslnvn}, and \eqref{alphaxivn}, we have
\[
\begin{split}
\rhoalpha^{r,k}(E_i,v_j) &= \delta_{i+1,j} \xi^{2^k} (\lambda_1 \cdots \lambda_{i-1})^{-2^k}\lambda_i^r v_i,\\
\rhoalpha^{r,k}(F_i,v_j) &= \delta_{ij}\xi^{2^k} (\lambda_1 \cdots \lambda_{i-1})^{-2^k}\lambda_i^{-r-2^k}v_{i+1},\quad\text{and}\\
\rhoalpha^{r,k}(K_i^{\pm 1}, v_j) &=
p_{ij}\xi^{2^k} (\lambda_1 \cdots \lambda_{j-1})^{-2^k} v_j.
\end{split}
\]
In summary, given $n \geq 2$, we have constructed an uncountable, $(n+2)$-parameter family
\[
\{(V_n)^{r,k}_\alpha \colon \lambda_1, \ldots , \lambda_{n-1} \in \bC \setminus\{0\},\, \xi \in \bC,\, r,k \geq 0\}
\]
of $n$-dimensional derived modules over the Hom-quantum enveloping algebras $\uqsln_\beta$.  We recover the original $\uqsln$-module $V_n$ by taking $\lambda_1 = \cdots = \lambda_{n-1} = \xi = 1$.
\qed
\end{example}

Corollary \ref{cor:twisthommodule} and Theorem \ref{thm:twistmodule} can be readily dualized by inverting the arrows and replacing $\mu$ by $\Delta$ in the various commutative diagrams in their proofs.  Therefore, we omit the proofs of the following two results, which are dual to Corollary \ref{cor:twisthommodule} and Theorem \ref{thm:twistmodule}, respectively.

\begin{theorem}
\label{thm:twisthomco}
Let $(C,\Delta,\alpha_C)$ be a Hom-coassociative coalgebra and $(M,\alpha_M)$ be a $C$-comodule with structure map $\rho \colon M \to C \otimes M$.  For any integers $n,k \geq 0$, define the map
\[
\rho^{n,k} = (\alpha_C^n \otimes Id_M) \circ \rho \circ \alpha_M^{2^k-1} \colon M \to C \otimes M.
\]
Then each $\rho^{n,k}$ gives the Hom-module $M^k=(M,\alpha_M^{2^k})$ the structure of a $C^k$-comodule, where $C^k$ is the $k$th derived Hom-coassociative coalgebra $(C,\Delta^{(k)}=\Delta\circ\alpha_C^{2^k-1},\alpha_C^{2^k})$ in Theorem \ref{thm:secondtp}.
\end{theorem}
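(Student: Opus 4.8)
The plan is to dualize the two-step proof of Corollary \ref{cor:twisthommodule} by reversing every arrow and replacing $\mu$ by $\Delta$, exactly as the proofs of Theorems \ref{thm:twisthommodule} and \ref{thm2:twisthommodule} are dualized. Concretely, I would first prove two special cases and then compose them. In the first special case I twist only the structure map, setting $\rho^{n,0} = (\alpha_C^n \otimes Id_M) \circ \rho$ and showing that this makes the unchanged Hom-module $(M,\alpha_M)$ a comodule over the unchanged coalgebra $C$ (the dual of Theorem \ref{thm:twisthommodule}). In the second special case I twist the coalgebra, the module, and the structure map simultaneously, setting $\rho^{0,k} = \rho \circ \alpha_M^{2^k-1}$ and showing that this makes $M^k = (M,\alpha_M^{2^k})$ a comodule over the derived coalgebra $C^k$ (the dual of Theorem \ref{thm2:twisthommodule}). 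The general map then factors as $\rho^{n,k} = (\rho^{n,0})^{0,k}$, so applying the second case to the $C$-comodule $(M,\rho^{n,0})$ produced by the first case yields the assertion.

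For the first special case, since $\rho^{0,0} = \rho$ and $\rho^{n+1,0} = (\alpha_C \otimes Id_M)\circ \rho^{n,0} = (\rho^{n,0})^{1,0}$, an induction reduces everything to $n=1$. I then check the two comodule axioms \eqref{eq:comoduleaxiom} for $\rho^{1,0} = (\alpha_C \otimes Id_M)\circ\rho$ by the arrow-reversals of the two commutative diagrams in the proof of Theorem \ref{thm:twisthommodule}: the comultiplicativity square commutes because both composites equal $(\alpha_C^2 \otimes \alpha_M)\circ\rho$, and the Hom-coassociativity diagram commutes because its constituent rectangles are instances of the comultiplicativity of $\alpha_C$ and of the Hom-coassociativity of the original comodule $(M,\rho)$.

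For the second special case, since $\rho^{0,0}=\rho$, $\rho^{0,k+1} = \rho^{0,k}\circ\alpha_M^{2^k}$ (which is the one-step construction applied to $(M^k,\rho^{0,k})$), $M^{k+1}=(M^k)^1$, and $C^{k+1}=(C^k)^1$, an induction reduces to $k=1$, where one must show that $\rho^{0,1}=\rho\circ\alpha_M$ makes $M^1=(M,\alpha_M^2)$ a comodule over $C^1=(C,\Delta\circ\alpha_C,\alpha_C^2)$. Again the two axioms follow from the dualized diagrams of Theorem \ref{thm2:twisthommodule}, the relevant rectangles being instances of the comultiplicativity and Hom-coassociativity of the original $C$-comodule. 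Combining the two cases, applying the second construction to $(M,\rho^{n,0})$ gives $(\rho^{n,0})^{0,k} = \rho^{n,0}\circ\alpha_M^{2^k-1} = (\alpha_C^n\otimes Id_M)\circ\rho\circ\alpha_M^{2^k-1} = \rho^{n,k}$, as required.

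The computations are genuinely routine once the diagrams are transcribed, so there is no deep obstacle; the one point demanding care is the Hom-coassociativity axiom, where the comultiplication $\Delta$ (or $\Delta^{(k)}$) must interact correctly with the powers of $\alpha_C$. The crucial rectangle in each Hom-coassociativity diagram commutes precisely by the comultiplicativity condition $\alpha_C^{\otimes 2}\circ\Delta = \Delta\circ\alpha_C$ of \eqref{homcoassaxioms}, which is the exact dual of the place where multiplicativity of $\alpha_A$ was invoked in the module proofs. Keeping track of which tensor factor each power of $\alpha_C$ acts on, and verifying at the end that $\rho^{n,k}=(\rho^{n,0})^{0,k}$ so that the two twists compose consistently, is the only bookkeeping that warrants attention.
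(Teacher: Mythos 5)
Your proposal is correct and is exactly the route the paper takes: the paper omits the proof of Theorem \ref{thm:twisthomco}, stating only that it is obtained by dualizing Corollary \ref{cor:twisthommodule} (i.e., the two-step argument of Theorems \ref{thm:twisthommodule} and \ref{thm2:twisthommodule}) by inverting arrows and replacing $\mu$ with $\Delta$. You carry out precisely that dualization, and the details you supply (the factorization $\rho^{n,k}=(\rho^{n,0})^{0,k}$, the inductions reducing to $n=1$ and $k=1$, and the use of comultiplicativity of $\alpha_C$ together with the Hom-coassociativity of the original comodule) all check out.
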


\begin{theorem}
\label{thm:twistco}
Let $(C,\Delta_C)$ be a coassociative coalgebra and $M$ be a $C$-comodule in the usual sense with structure map $\rho \colon M \to C \otimes M$.  Suppose $\alpha_C \colon C \to C$ is a coalgebra morphism and $\alpha_M \colon M \to M$ is a linear self-map such that
\[
\rho \circ \alpha_M = (\alpha_C \otimes \alpha_M)\circ\rho.
\]
For any integers $n,k \geq 0$, define the map
\[
\rhoalpha^{n,k} = (\alpha_C^{n} \otimes Id_M) \circ \rho \colon \alpha_M^{2^k} \colon M \to C \otimes M.
\]
Then each $\rhoalpha^{n,k}$ gives the Hom-module $M_\alpha^k = (M,\alpha_M^{2^k})$ the structure of a $C_\beta$-comodule, where $\beta = \alpha_C^{2^k}$ and $C_\beta$ is the Hom-coassociative coalgebra $(C,\Delta_\beta=\Delta_C \circ \beta,\beta)$ in Theorem \ref{thm:hombialg}.
\end{theorem}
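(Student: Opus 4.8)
The plan is to mirror the proof of Theorem \ref{thm:twistmodule}, dualizing each step by reversing the arrows and replacing the multiplication $\mu$ by the comultiplication $\Delta$. First I would reduce to the base case $n=k=0$ exactly as in that proof, but using the comodule analog Theorem \ref{thm:twisthomco} in place of Corollary \ref{cor:twisthommodule}. Thus, once I establish that $\rhoalpha = \rhoalpha^{0,0} = \rho \circ \alpha_M$ gives $M_\alpha = (M,\alpha_M)$ the structure of a $C_\alpha$-comodule, where $C_\alpha = (C, \Delta_C \circ \alpha_C, \alpha_C)$ is the Hom-coassociative coalgebra from Theorem \ref{thm:hombialg}, I would apply Theorem \ref{thm:twisthomco} to the $C_\alpha$-comodule $M_\alpha$. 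A direct computation then gives $(\rhoalpha)^{n,k} = (\alpha_C^n \otimes Id_M) \circ \rhoalpha \circ \alpha_M^{2^k-1} = (\alpha_C^n \otimes Id_M) \circ \rho \circ \alpha_M^{2^k} = \rhoalpha^{n,k}$, while $(C_\alpha)^k = C_{\alpha_C^{2^k}} = C_\beta$ by the coalgebra analog of Remark \ref{rk:derivedhomalg}(3), so the general double-sequence follows from the base case.

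For the base case I would verify the two axioms in \eqref{eq:comoduleaxiom} for $\rhoalpha$. The comultiplicativity axiom, $\rhoalpha \circ \alpha_M = (\alpha_C \otimes \alpha_M) \circ \rhoalpha$, is immediate: both sides collapse to $\rho \circ \alpha_M^2$ after one use of the hypothesis $(\alpha_C \otimes \alpha_M) \circ \rho = \rho \circ \alpha_M$. The Hom-coassociativity axiom, $(\alpha_C \otimes \rhoalpha) \circ \rhoalpha = (\Delta_\alpha \otimes \alpha_M) \circ \rhoalpha$ with $\Delta_\alpha = \Delta_C \circ \alpha_C$, is the substantive step, verified by the commutative diagram dual to the Hom-associativity diagram in the proof of Theorem \ref{thm:twistmodule}. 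Concretely, I would rewrite $\alpha_C \otimes \rhoalpha = (Id_C \otimes \rho) \circ (\alpha_C \otimes \alpha_M)$ and $\Delta_\alpha \otimes \alpha_M = (\Delta_C \otimes Id_M) \circ (\alpha_C \otimes \alpha_M)$, then push $\alpha_C \otimes \alpha_M$ past $\rho$ using the hypothesis; this reduces the left-hand side to $(Id_C \otimes \rho) \circ \rho \circ \alpha_M^2$ and the right-hand side to $(\Delta_C \otimes Id_M) \circ \rho \circ \alpha_M^2$. These agree by the ordinary comodule coassociativity $(Id_C \otimes \rho) \circ \rho = (\Delta_C \otimes Id_M) \circ \rho$ of $M$ as a $C$-comodule.

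I expect the only real (and modest) obstacle to be the bookkeeping of arrow directions and composition order in the dualization. In the module setting the structure map runs $A \otimes M \to M$ and is twisted as $\alpha_M^{2^k} \circ \rho \circ (\alpha_A^n \otimes Id_M)$, whereas here $\rho \colon M \to C \otimes M$ is twisted as $(\alpha_C^n \otimes Id_M) \circ \rho \circ \alpha_M^{2^k}$, so the placement of $\alpha_M^{2^k}$ relative to $\rho$ and the side on which $\alpha_C^n$ acts must both be reversed. Once the correct dual diagram is drawn, every square commutes either by the hypothesis $\rho \circ \alpha_M = (\alpha_C \otimes \alpha_M) \circ \rho$ or by the ordinary comodule coassociativity, exactly paralleling Theorem \ref{thm:twistmodule}, so no genuinely new difficulty arises.
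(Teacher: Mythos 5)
Your proposal is correct and follows exactly the route the paper intends: the paper omits this proof, stating that it is obtained by inverting the arrows and replacing $\mu$ by $\Delta$ in the proof of Theorem \ref{thm:twistmodule}, and your argument carries out precisely that dualization (reduction to the base case $\rhoalpha^{0,0}=\rho\circ\alpha_M$ via Theorem \ref{thm:twisthomco}, then verification of comultiplicativity and Hom-coassociativity using the compatibility hypothesis and ordinary comodule coassociativity). The bookkeeping of composition order is handled correctly, including the reading of the statement's typo ``$\rho\colon\alpha_M^{2^k}$'' as $\rho\circ\alpha_M^{2^k}$.
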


\section{Module Hom-algebras}
\label{sec:mha}

Module-algebras play an important role in quantum group theory.  In this section we study the Hom version of a module-algebra, called a module Hom-algebra, which consists of a Hom-associative algebra with a suitable module action by a Hom-bialgebra.  In Theorem \ref{thm:mhachar} we provide an alternative characterization of the module Hom-algebra axiom \eqref{modhomalg}.  Then we establish the two Twisting Principles \ref{twistingprinciple} for module Hom-algebras (Theorems \ref{thm:twistmha} and \ref{thm:twistma}).  Examples of module Hom-algebras related to Hom-quantum geometry on the Hom-quantum planes (Example \ref{ex:hqspace}) are considered in Examples \ref{ex:hqg} and \ref{ex2:hqg}.

Before we give the definition of a module Hom-algebra, let us first recall the definition of a module-algebra.  Let $H$ be a bialgebra and $A$ be an associative algebra.  Then an \textbf{$H$-module-algebra} structure on $A$ consists of an $H$-module structure $\rho \colon H \otimes A \to A$, written $\rho(x,a) = xa$, such that the \textbf{module-algebra axiom}
\begin{equation}
\label{modalgaxiom}
x(ab) = \sum_{(x)} (x_1a)(x_2b)
\end{equation}
is satisfied for all $x \in H$ and $a,b \in A$.  Recall that we use Sweedler's notation $\Delta(x) = \sum_{(x)} x_1 \otimes x_2$ for comultiplication.  In element-free form, the module-algebra axiom is
\[
\rho \circ (Id_H \otimes \mu_A) = \mu_A \circ \rho^{\otimes 2} \circ (2~3) \circ (\Delta_H \otimes Id_A \otimes Id_A),
\]
where $(2~3) = Id_H \otimes \tau_{H,A} \otimes Id_A$ and $\tau_{H,A} \colon H \otimes A \to A \otimes H$ is the twist isomorphism.

We already defined Hom-associative algebras, Hom-bialgebras (Definition \ref{def:homas}), and modules over a Hom-associative algebra (Definition \ref{def:hommodule}).  So to define the Hom version of a module-algebra, we need a suitable Hom-type analog of the module-algebra axiom.

\begin{definition}
\label{def:mha}
Let $(H,\mu_H,\Delta_H,\alpha_H)$ be a Hom-bialgebra and $(A,\mu_A,\alpha_A)$ be a Hom-associative algebra.  A \textbf{$H$-module Hom-algebra} structure on $A$ consists of an $H$-module structure $\rho \colon H \otimes A \to A$ (as in Definition \ref{def:hommodule}) such that the \textbf{module Hom-algebra axiom}
\begin{equation}
\label{modhomalg}
\alpha^2_H(x)(ab) = \sum_{(x)} (x_1a)(x_2b)
\end{equation}
is satisfied for all $x \in H$ and $a,b \in A$, where $\rho(x,a) = xa$.
\end{definition}
In element-free form, the module Hom-algebra axiom \eqref{modhomalg} is
\begin{equation}
\label{mha'}
\rho \circ (\alpha_H^2 \otimes \mu_A) = \mu_A \circ \rho^{\otimes 2} \circ (2~3) \circ (\Delta_H \otimes Id_A \otimes Id_A).
\end{equation}
An $H$-module Hom-algebra with $\alpha_H = Id_H$ and $\alpha_A = Id_A$ is exactly an $H$-module-algebra in the usual sense.  We will prove the Twisting Principles \ref{twistingprinciple} for module Hom-algebras.  Before that let us give a more conceptual characterization of the module Hom-algebra axiom \eqref{modhomalg}.  To do that we need the following preliminary result.

\begin{proposition}
\label{prop:hombimodule}
Let $(H,\mu,\Delta,\alpha_H)$ be a Hom-bialgebra, and let $(M,\alpha_M)$ and $(N,\alpha_N)$ be $H$-modules with structure maps $\rho_M$ and $\rho_N$, respectively.  Then $M \otimes N$ is an $H$-module with structure map
\begin{equation}
\label{rhomn}
\rho_{MN} = (\rho_M \otimes \rho_N) \circ (2~3) \circ (\Delta \otimes Id_M \otimes Id_N) \colon H \otimes M \otimes N \to M \otimes N,
\end{equation}
where $(2~3) =  Id_H \otimes \tau_{H,M} \otimes Id_N$ and $\tau_{H,M} \colon H \otimes M \to M \otimes H$ is the twist isomorphism.
\end{proposition}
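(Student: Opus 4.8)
The plan is to verify directly that the tensor product Hom-module $(M \otimes N, \alpha_M \otimes \alpha_N)$ of Definition \ref{def:hommodule}(1), equipped with the map $\rho_{MN}$ of \eqref{rhomn}, satisfies the two $H$-module axioms \eqref{eq:moduleaxiom}. Writing $\rho_M(x,m) = xm$ and $\rho_N(x,n) = xn$, the map $\rho_{MN}$ sends $x \otimes (m \otimes n)$ to $\sum_{(x)} x_1 m \otimes x_2 n$, so I would carry out both verifications in Sweedler notation; the element-free commutative diagrams implicit in this argument are the evident analogs of those appearing in the proof of Theorem \ref{thm:secondtp}.

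For the multiplicativity axiom, $(\alpha_M \otimes \alpha_N) \circ \rho_{MN} = \rho_{MN} \circ (\alpha_H \otimes \alpha_M \otimes \alpha_N)$, the left-hand side evaluated on $x \otimes (m \otimes n)$ is $\sum_{(x)} \alpha_M(x_1 m) \otimes \alpha_N(x_2 n)$. I would apply the multiplicativity of $\rho_M$ and of $\rho_N$ from \eqref{eq:moduleaxiom} to rewrite the two tensor factors as $\alpha_H(x_1)\alpha_M(m)$ and $\alpha_H(x_2)\alpha_N(n)$, and then invoke the comultiplicativity of $\alpha_H$ from \eqref{homcoassaxioms}, namely $\Delta(\alpha_H(x)) = \sum_{(x)} \alpha_H(x_1) \otimes \alpha_H(x_2)$, to recognize the result as $\rho_{MN}(\alpha_H(x), \alpha_M(m) \otimes \alpha_N(n))$. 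This step is routine.

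The substance lies in the Hom-associativity axiom, $\rho_{MN} \circ (\alpha_H \otimes \rho_{MN}) = \rho_{MN} \circ (\mu_H \otimes (\alpha_M \otimes \alpha_N))$. Expanding the left-hand side on $x \otimes y \otimes (m \otimes n)$ gives $\sum \alpha_H(x)_1 (y_1 m) \otimes \alpha_H(x)_2 (y_2 n)$; using comultiplicativity of $\alpha_H$ to replace $\alpha_H(x)_1 \otimes \alpha_H(x)_2$ by $\alpha_H(x_1) \otimes \alpha_H(x_2)$, and then the Hom-associativity of $\rho_M$ and $\rho_N$ in \eqref{eq:moduleaxiom} (which reads $\alpha_H(a)(bm) = (ab)\alpha_M(m)$), I obtain $\sum (x_1 y_1)\alpha_M(m) \otimes (x_2 y_2)\alpha_N(n)$. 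The right-hand side, on the other hand, is $\sum_{(xy)} (xy)_1 \alpha_M(m) \otimes (xy)_2 \alpha_N(n)$, and the two coincide precisely because the bialgebra compatibility condition \eqref{def:hombi} yields $\Delta(xy) = \sum x_1 y_1 \otimes x_2 y_2$.

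I expect the main obstacle to be exactly this last matching: it is the single place where the full Hom-bialgebra structure of $H$ — rather than merely its Hom-coalgebra structure — is used, so the proof genuinely depends on \eqref{def:hombi}. The remaining difficulty is pure bookkeeping: correctly tracking the positions of the four tensor factors across the twist $(2~3) = Id_H \otimes \tau_{H,M} \otimes Id_N$, and, in the element-free formulation, invoking the naturality of $\tau$ when commuting $\alpha_H \otimes \alpha_M \otimes \alpha_N$ past it. In Sweedler notation these transpositions are automatic, which is why I would favor that presentation over a long chain of commutative diagrams.
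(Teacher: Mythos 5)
Your proposal is correct and is essentially the paper's own proof translated from commutative diagrams into Sweedler notation: both verifications rest on the same ingredients, namely the (co)multiplicativity of $\rho_M$, $\rho_N$, and $\alpha_H$ for the first axiom, and the Hom-associativity of $\rho_M$ and $\rho_N$ together with the compatibility condition \eqref{def:hombi} for the second. Your observation that \eqref{def:hombi} is the one place where the full Hom-bialgebra structure enters matches exactly the role it plays in the paper's diagram (the ``left trapezoid'').
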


\begin{proof}
We need to prove the two conditions in \eqref{eq:moduleaxiom} for $\rho_{MN}$ and $\alpha_M \otimes \alpha_N$.  First, the multiplicativity condition for $\rho_{MN}$ says
\[
(\alpha_M \otimes \alpha_N) \circ \rho_{MN} = \rho_{MN} \circ (\alpha_H \otimes \alpha_M \otimes \alpha_N).
\]
This is true by the following commutative diagram:
\[
\nicearrow
\xymatrix{
H \otimes M \otimes N \ar[rrr]^-{\alpha_H \otimes \alpha_M \otimes \alpha_N} \ar[d]_-{\Delta \otimes Id_M \otimes Id_N} & & & H \otimes M \otimes N \ar[d]^-{\Delta \otimes Id_M \otimes Id_N}\\
H^{\otimes 2} \otimes M \otimes N \ar[rrr]^-{\alpha_H^{\otimes 2} \otimes \alpha_M \otimes \alpha_N} \ar[d]_-{(2~3)} & & & H^{\otimes 2} \otimes M \otimes N \ar[d]^-{(2~3)}\\
H \otimes M  \otimes H \otimes N \ar[rrr]^-{\alpha_H \otimes \alpha_M \otimes \alpha_H \otimes \alpha_N} \ar[d]_-{\rho_M \otimes \rho_N} & & & H \otimes M  \otimes H \otimes N \ar[d]^-{\rho_M \otimes \rho_N}\\
M \otimes N \ar[rrr]^-{\alpha_M \otimes \alpha_N} & & & M \otimes N.
}
\]
The top rectangle is commutative by the comultiplicativity in $H$.  The middle rectangle is commutative by definition.  The bottom rectangle is commutative by the multiplicativity for $\rho_M$ and $\rho_N$.

Next, the Hom-associativity for $\rho_{MN}$ says
\begin{equation}
\label{homassrhomn}
\rho_{MN} \circ (\alpha_H \otimes \rho_{MN}) = \rho_{MN} \circ (\mu \otimes \alpha_M \otimes \alpha_N).
\end{equation}
To prove \eqref{homassrhomn}, we use the abbreviations:
\[
\begin{split}
\rho' &= (\rho_M \otimes \rho_N) \circ (2~3) \colon H^{\otimes 2} \otimes M \otimes N \to M \otimes N,\\
\mu'& = \mu^{\otimes 2} \circ (2~3) \colon H^{\otimes 4} \to H^{\otimes 2},\\
P &= M \otimes N, \quad\text{and}\\
\alpha_P &= \alpha_M \otimes \alpha_N.
\end{split}
\]
With these notations, we have
\[
\begin{split}
\rho_{MN} &= (\rho_M \otimes \rho_N) \circ (2~3) \circ (\Delta \otimes Id_M \otimes Id_N)\\
&= \rho' \circ (\Delta \otimes Id_P)
\end{split}
\]
and
\begin{equation}
\label{deltamuh}
\begin{split}
\Delta \circ \mu &= \mu^{\otimes 2} \circ (2~3) \circ \Delta^{\otimes 2}\\
&= \mu' \circ \Delta^{\otimes 2}
\end{split}
\end{equation}
by \eqref{def:hombi}.  The commutativity of the following diagram proves \eqref{homassrhomn}:
\[
\nicearrow
\xymatrix{
H^{\otimes 2} \otimes P \ar[rr]^-{\alpha_H \otimes \Delta \otimes Id_P} \ar[ddrr]^-{\Delta^{\otimes 2} \otimes Id_P} \ar[dddd]_-{\mu \otimes \alpha_P} & & H^{\otimes 3} \otimes P \ar[rr]^-{Id_H \otimes \rho'} & & H \otimes P \ar[dd]^-{\Delta \otimes Id_P}\\
& & & & \\
& & H^{\otimes 4} \otimes P \ar[rr]^-{\alpha^{\otimes 2} \otimes \rho'} \ar[dd]_-{\mu' \otimes \alpha_P} & & H^{\otimes 2} \otimes P \ar[dd]^-{\rho'}\\
& & & & \\
H \otimes P \ar[rr]_-{\Delta \otimes Id_P} & & H^{\otimes 2} \otimes P \ar[rr]_-{\rho'} & & P.
}
\]
The composition along the top and then the right edges of the big square is the left-hand side of \eqref{homassrhomn}.  The composition along the left and then the bottom edges of the big square is the right-hand side of \eqref{homassrhomn}.  The left trapezoid is commutative by \eqref{deltamuh}.  The top trapezoid is commutative by the comultiplicativity in $H$.  The bottom right square is commutative by the Hom-associativity \eqref{eq:moduleaxiom} for $\rho_M$ and $\rho_N$.
\end{proof}

Proposition \ref{prop:hombimodule} has an obvious analog for comodules.  Its proof is exactly dual to that of Proposition \ref{prop:hombimodule}.  Indeed, to prove the two conditions in \eqref{eq:comoduleaxiom} for the tensor product of two comodules, one simply inverts the arrows and interchanges $\mu$ and $\Delta$ in the two commutative diagrams in the proof of Proposition \ref{prop:hombimodule}.  Thus, we will omit the proof of the following dual result.

\begin{proposition}
Let $(H,\mu,\Delta,\alpha_H)$ be a Hom-bialgebra, and let $(M,\alpha_M)$ and $(N,\alpha_N)$ be $H$-comodules with structure maps $\rho_M$ and $\rho_N$, respectively.  Then $M \otimes N$ is an $H$-comodule with structure map
\[
\rho_{MN} = (\mu \otimes Id_M \otimes Id_N) \circ (2~3) \circ (\rho_M \otimes \rho_N) \colon M \otimes N \to H \otimes M \otimes N,
\]
where $(2~3) =  Id_H \otimes \tau_{M,H} \otimes Id_N$.
\end{proposition}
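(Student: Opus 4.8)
The plan is to verify directly the two comodule axioms \eqref{eq:comoduleaxiom} for the pair $(M \otimes N, \alpha_M \otimes \alpha_N)$ equipped with $\rho_{MN}$, obtaining each by dualizing the corresponding commutative diagram in the proof of Proposition \ref{prop:hombimodule}. Concretely, every diagram in that proof is turned upside down: all vertical arrows are reversed, the roles of $\mu$ and $\Delta$ are interchanged, and the module structure maps are replaced by the given comodule structure maps $\rho_M \colon M \to H \otimes M$ and $\rho_N \colon N \to H \otimes N$. Since $\rho_{MN} = (\mu \otimes Id_M \otimes Id_N) \circ (2~3) \circ (\rho_M \otimes \rho_N)$ is precisely the arrow-reversal of the module structure map \eqref{rhomn}, this dualization is mechanical.

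For the comultiplicativity axiom $\rho_{MN} \circ (\alpha_M \otimes \alpha_N) = (\alpha_H \otimes \alpha_M \otimes \alpha_N) \circ \rho_{MN}$, I would factor $\rho_{MN}$ as the composite of $\rho_M \otimes \rho_N$, then $(2~3)$, then $\mu \otimes Id_M \otimes Id_N$, and stack three rectangles accordingly. Reading from the source $M \otimes N$ downward, the top rectangle (carrying $\rho_M \otimes \rho_N$) commutes by the comultiplicativity of $\rho_M$ and $\rho_N$, i.e. the first line of \eqref{eq:comoduleaxiom} for each factor; the middle rectangle commutes by the definition of the permutation $(2~3) = Id_H \otimes \tau_{M,H} \otimes Id_N$; and the bottom rectangle (carrying $\mu \otimes Id_M \otimes Id_N$) commutes by the multiplicativity of $\alpha_H$ with respect to $\mu$, namely $\alpha_H \circ \mu = \mu \circ \alpha_H^{\otimes 2}$ from \eqref{homassaxioms}. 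This is exactly the mirror image of the comultiplicativity-in-$H$ / multiplicativity-for-$\rho$ pattern used in Proposition \ref{prop:hombimodule}.

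For the Hom-coassociativity axiom $(\alpha_H \otimes \rho_{MN}) \circ \rho_{MN} = (\Delta \otimes \alpha_M \otimes \alpha_N) \circ \rho_{MN}$, I would introduce the dual abbreviations $\rho' = (2~3) \circ (\rho_M \otimes \rho_N) \colon M \otimes N \to H^{\otimes 2} \otimes M \otimes N$ and $\mu' = \mu^{\otimes 2} \circ (2~3)$, so that $\rho_{MN} = (\mu \otimes Id_P) \circ \rho'$ with $P = M \otimes N$. Substituting this into the right-hand side and peeling off $\alpha_P = \alpha_M \otimes \alpha_N$ produces a factor $\Delta \circ \mu$, which by the bialgebra compatibility \eqref{def:hombi}, used in the form $\Delta \circ \mu = \mu' \circ \Delta^{\otimes 2}$ recorded in \eqref{deltamuh}, supplies the trapezoid relating the two ways of splitting a product, exactly as in the module argument. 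The remaining square is then the Hom-coassociativity \eqref{eq:comoduleaxiom} for $\rho_M$ and $\rho_N$, and the last trapezoid is the comultiplicativity of $\alpha_H$; assembling these yields the desired identity.

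The main obstacle is purely the bookkeeping of the permutations. Because $(2~3)$ now uses $\tau_{M,H}$ rather than $\tau_{H,M}$, and because it must thread correctly through the iterated comodule maps (where each $\rho$ emits an $H$-factor on the left), one must check that the four tensor factors arrive in the order $H \otimes H \otimes M \otimes N$ before $\mu$ or $\Delta$ acts, and that the two adjacent $H$-slots produced are precisely the ones combined by the bialgebra axiom \eqref{def:hombi}. Once the indexing conventions are pinned down, each rectangle commutes for the reasons indicated, so the verification is routine and the proof is, as claimed, exactly dual to that of Proposition \ref{prop:hombimodule}.
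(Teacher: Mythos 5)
Your proposal is correct and takes essentially the same route as the paper, which itself omits the proof on the grounds that it is ``exactly dual'' to that of Proposition \ref{prop:hombimodule}, obtained by inverting arrows and interchanging $\mu$ and $\Delta$ in the two commutative diagrams there. Your dualized diagrams and the axioms you invoke for each rectangle (comultiplicativity of $\rho_M$ and $\rho_N$, multiplicativity of $\alpha_H$ with respect to $\mu$, the compatibility \eqref{def:hombi} in the form \eqref{deltamuh}, and Hom-coassociativity of the factors) are precisely the intended dual ingredients.
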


In the following result, we give an alternative, more conceptual characterization of a module Hom-algebra (Definition \ref{def:mha}), generalizing a similar characterization of module-algebras.

\begin{theorem}
\label{thm:mhachar}
Let $H = (H,\mu_H,\Delta_H,\alpha_H)$ be a Hom-bialgebra, $A = (A,\mu_A,\alpha_A)$ be a Hom-associative algebra, and $\rho \colon H \otimes A \to A$ be an $H$-module structure on $A$.  Then the module Hom-algebra axiom \eqref{modhomalg} is satisfied if and only if $\mu_A \colon A^{\otimes 2} \to A$ is a morphism of $H$-modules, in which $A^{\otimes 2}$ and $A$ are given the $H$-module structure maps $\rho_{AA}$ \eqref{rhomn} and $\rho^{2,0}$ \eqref{rhon0}, respectively.
\end{theorem}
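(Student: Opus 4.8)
The plan is to unwind the definition of a morphism of $H$-modules for the map $\mu_A$ equipped with the two specified structures, and to observe that the resulting condition is literally the element-free module Hom-algebra axiom \eqref{mha'}. By Definition \ref{def:hommodule}, a morphism of $H$-modules must satisfy two separate requirements: it must commute with the twisting maps (that is, be a morphism of the underlying Hom-modules), and it must be compatible with the structure maps in the sense of \eqref{eq:modmorphism}. I would treat these two requirements in turn, first disposing of the twisting-map condition as automatic, then matching the structure-map condition against \eqref{modhomalg}.

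For the twisting-map condition, I would record that by Proposition \ref{prop:hombimodule} the $H$-module $A^{\otimes 2}$ carries the twisting map $\alpha_A \otimes \alpha_A$, while by Theorem \ref{thm:twisthommodule} the $H$-module $(A,\rho^{2,0})$ keeps the original twisting map $\alpha_A$. Hence the requirement that $\mu_A$ commute with the twisting maps reads $\alpha_A \circ \mu_A = \mu_A \circ (\alpha_A \otimes \alpha_A)$, which is exactly the multiplicativity axiom of the Hom-associative algebra $A$ in \eqref{homassaxioms}. This holds by hypothesis, so this half of the morphism condition imposes no constraint.

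For the structure-map condition \eqref{eq:modmorphism}, written for the map $\mu_A$ with source structure map $\rho_{AA}$ and target structure map $\rho^{2,0}$, it reads
\[
\mu_A \circ \rho_{AA} = \rho^{2,0} \circ (Id_H \otimes \mu_A).
\]
I would then substitute the explicit formulas for the two structure maps. From \eqref{rhomn} with $M = N = A$ we have $\rho_{AA} = \rho^{\otimes 2} \circ (2~3) \circ (\Delta_H \otimes Id_A \otimes Id_A)$, so the left-hand side equals $\mu_A \circ \rho^{\otimes 2} \circ (2~3) \circ (\Delta_H \otimes Id_A \otimes Id_A)$, which is precisely the right-hand side of \eqref{mha'}. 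From \eqref{rhon0} applied to the $H$-module $A$ with $n = 2$ we have $\rho^{2,0} = \rho \circ (\alpha_H^2 \otimes Id_A)$, so the right-hand side equals $\rho \circ (\alpha_H^2 \otimes Id_A) \circ (Id_H \otimes \mu_A) = \rho \circ (\alpha_H^2 \otimes \mu_A)$, which is precisely the left-hand side of \eqref{mha'}. Thus the displayed equation is nothing but \eqref{mha'} rewritten, and the asserted equivalence with \eqref{modhomalg} follows at once.

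This argument is pure bookkeeping, and I do not expect any genuine obstacle. The one point that warrants care is the identification of the twisting maps on the two modules, which is what makes the twisting-map half of the morphism condition collapse to the multiplicativity of $A$; the remaining content is simply the observation that the structure maps $\rho_{AA}$ and $\rho^{2,0}$ were chosen precisely so that the factor $\alpha_H^2$ and the permutation $(2~3)$ occurring in \eqref{modhomalg} are reproduced exactly.
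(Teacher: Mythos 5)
Your proposal is correct and follows essentially the same route as the paper: the paper likewise notes that the multiplicativity axiom of $A$ makes $\mu_A$ automatically a morphism of the underlying Hom-modules, and then identifies the two sides of \eqref{modhomalg} with $\rho^{2,0} \circ (Id_H \otimes \mu_A)$ and $\mu_A \circ \rho_{AA}$ so that the remaining morphism condition \eqref{eq:modmorphism} is exactly \eqref{mha'}. Your write-up just makes the unwinding of \eqref{rhomn} and \eqref{rhon0} a bit more explicit.
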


\begin{proof}
Since $A$ is a Hom-associative algebra, we already know that $\mu_A$ is a morphism of Hom-modules.  In the module Hom-algebra axiom \eqref{modhomalg}, the left-hand side is
\[
\alpha_H^2(x)(ab) = \left(\rho^{2,0} \circ (Id_H \otimes \mu_A)\right)(x \otimes a \otimes b).
\]
Likewise, the right-hand side in \eqref{modhomalg} is
\[
\sum_{(x)} (x_1a)(x_2b) = \left(\mu_A \circ \rho_{AA}\right)(x \otimes a \otimes b).
\]
Therefore, the module Hom-algebra axiom \eqref{modhomalg} is equivalent to
\[
\rho^{2,0} \circ (Id_H \otimes \mu_A) = \mu_A \circ \rho_{AA}.
\]
This equality is equivalent to $\mu_A \colon A^{\otimes 2} \to A$ being a morphism of $H$-modules (see \eqref{eq:modmorphism}), provided $A^{\otimes 2}$ and $A$ are equipped with the $H$-module structure maps $\rho_{AA}$ and $\rho^{2,0}$, respectively.
\end{proof}

We now prove the Twisting Principles \ref{twistingprinciple} for module Hom-algebras along the lines of Corollary \ref{cor:twisthommodule} and Theorem \ref{thm:twistmodule}.  We begin with the following version of the second Twisting Principle, which says that every module Hom-algebra gives rise to a derived double-sequence of module Hom-algebras.

\begin{theorem}
\label{thm:twistmha}
Let $H = (H,\mu_H,\Delta_H,\alpha_H)$ be a Hom-bialgebra, $A = (A,\mu_A,\alpha_A)$ be a Hom-associative algebra, and $\rho \colon H \otimes A \to A$ be an $H$-module Hom-algebra structure on $A$.  For any integers $n,k \geq 0$, define the map
\[
\rho^{n,k} = \alpha_A^{2^k-1} \circ \rho \circ (\alpha_H^n \otimes Id_A) \colon H \otimes A \to A.
\]
Then $\rho^{n,k}$ gives $A^k$ the structure of an $H^k$-module Hom-algebra, where $A^k$ is the $k$th derived Hom-associative algebra $(A,\mu_A^{(k)} = \alpha_A^{2^k-1} \circ \mu_A, \alpha_A^{2^k})$ and $H^k$ is the $k$th derived Hom-bialgebra $(H,\mu_H^{(k)} = \alpha_H^{2^k-1} \circ \mu_H,\Delta_H^{(k)} = \Delta_H \circ \alpha_H^{2^k-1},\alpha_H^{2^k})$ in Theorem \ref{thm:secondtp}.
\end{theorem}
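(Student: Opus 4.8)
The plan is to separate the assertion into its two ingredients: the underlying module structure and the module Hom-algebra axiom \eqref{modhomalg}. Since a Hom-bialgebra is in particular a Hom-associative algebra, the action $\rho$ makes $A$ a module over the Hom-associative algebra $H$, so Corollary \ref{cor:twisthommodule}, applied with $H$ in the role of the Hom-associative algebra and $A$ in the role of the module, immediately shows that $\rho^{n,k} = \alpha_A^{2^k-1}\circ\rho\circ(\alpha_H^n\otimes Id_A)$ gives $A^k = (A,\alpha_A^{2^k})$ the structure of an $H^k$-module. Thus the module axioms \eqref{eq:moduleaxiom} require no further work, and the entire content of the theorem is the module Hom-algebra axiom for the derived data.

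To verify that axiom I would follow the factorization $\rho^{n,k} = (\rho^{n,0})^{0,k}$ used in Corollary \ref{cor:twisthommodule}, reducing to two independent steps. First I would twist only the action, keeping $H$ and $A$ fixed, and check that $(H,A,\rho^{n,0})$ still satisfies \eqref{modhomalg}. Writing $\rho(x,a) = xa$, the left-hand side of the axiom for $\rho^{n,0}$ is $\rho(\alpha_H^{n+2}(x),ab)$, while comultiplicativity of $\alpha_H$ gives $\Delta_H(\alpha_H^n(x)) = \sum \alpha_H^n(x_1)\otimes\alpha_H^n(x_2)$, so the right-hand side is $\sum(\alpha_H^n(x_1)a)(\alpha_H^n(x_2)b)$; applying the original axiom \eqref{modhomalg} to the element $\alpha_H^n(x)$ identifies the two sides. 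This is a short, purely element-wise computation.

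Second, I would pass to the $k$th derived objects. Because $H^{k+1}=(H^k)^1$, $A^{k+1}=(A^k)^1$, and $\rho^{0,k+1}=(\rho^{0,k})^{0,1}$, an induction reduces this step to the case $k=1$, where the derived data are $\mu_A^{(1)} = \alpha_A\circ\mu_A$, $\Delta_H^{(1)} = \Delta_H\circ\alpha_H$, twisting map $\alpha_H^2$, and action $\rho^{0,1} = \alpha_A\circ\rho$. The axiom \eqref{modhomalg} for $(H^1,A^1,\rho^{0,1})$ then reads
\[
\rho^{0,1}\bigl((\alpha_H^2)^2(x),\mu_A^{(1)}(a,b)\bigr) = \sum \mu_A^{(1)}\bigl(\rho^{0,1}(x_1^{(1)},a),\rho^{0,1}(x_2^{(1)},b)\bigr),
\]
where $\Delta_H^{(1)}(x) = \sum x_1^{(1)}\otimes x_2^{(1)}$. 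I would establish this either by a commutative diagram in the style of Proposition \ref{prop:hombimodule} or, more directly, by pushing the outer copies of $\alpha_A$ through $\mu_A$ and $\rho$ via multiplicativity ($\alpha_A\circ\mu_A = \mu_A\circ\alpha_A^{\otimes 2}$ and $\alpha_A\circ\rho = \rho\circ(\alpha_H\otimes\alpha_A)$), rewriting $\Delta_H^{(1)}(x) = \sum\alpha_H(x_1)\otimes\alpha_H(x_2)$ by comultiplicativity, and finally invoking the original axiom. Both sides collapse to $\alpha_A^2\bigl(\alpha_H^3(x)(ab)\bigr)$, so they agree.

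The conceptual content is slight---everything is reduced to the original axiom together with the (co)multiplicativity of $\alpha_H$ and $\alpha_A$---so I expect the only real friction to be the bookkeeping of the exponents of $\alpha_H$ and $\alpha_A$ in the $k=1$ case. The delicate point is that the factor $\alpha_H^2$ built into \eqref{modhomalg} must combine correctly with the derived twisting map $\alpha_H^{2^k}$, whose square $(\alpha_H^{2^k})^2 = \alpha_H^{2^{k+1}}$ enters the derived axiom, while the $\alpha_A$-powers generated on the right-hand side by the exponents $2^k-1$ in $\rho^{n,k}$ and in $\mu_A^{(k)}$ must match those absorbed on the left. I would verify these counts explicitly for $k=1$ and let the induction propagate them. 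An alternative, more structural route is to deduce the axiom from the characterization in Theorem \ref{thm:mhachar}, by checking that the derived construction carries the $H$-module morphism $\mu_A$ to an $H^k$-module morphism; but this requires separate functoriality lemmas for tensor-product module structures and for morphisms, and seems no shorter.
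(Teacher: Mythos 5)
Your proposal is correct and follows essentially the same route as the paper: the same factorization $\rho^{n,k}=(\rho^{n,0})^{0,k}$, the same appeal to Corollary \ref{cor:twisthommodule} to dispose of the module axioms, and the same reduction of the remaining module Hom-algebra axiom to the two basic twists, verified via the original axiom together with the (co)multiplicativity of $\alpha_H$ and $\alpha_A$ (the paper writes these verifications as commutative diagrams rather than element-wise, and reduces the first step to $n=1$ by induction rather than treating general $n$ directly, but the content is identical). Your exponent bookkeeping in the $k=1$ case, with both sides collapsing to $\alpha_A^2\bigl(\alpha_H^3(x)(ab)\bigr)$, matches the paper's diagram chase.
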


\begin{proof}
Exactly as in Corollary \ref{cor:twisthommodule}, since $\rho^{n,k} = (\rho^{n,0})^{0,k}$, it suffices to prove the cases $\rho^{n,0}$ and $\rho^{0,k}$.  Furthermore, as in Theorems \ref{thm:twisthommodule} and \ref{thm2:twisthommodule}, by an induction argument it suffices to prove the two cases $\rho^{1,0}$ and $\rho^{0,1}$.  Since Corollary \ref{cor:twisthommodule} already tells us that $A^k$ is an $H^k$-module with structure map $\rho^{n,k}$, in each of the two cases $\rho^{1,0}$ and $\rho^{0,1}$ we only need to prove the module Hom-algebra axiom \eqref{modhomalg}, or equivalently \eqref{mha'}.

For the case $\rho^{1,0} = \rho \circ (\alpha_H \otimes Id_A)$, first note that $A^0 = A$ and $H^0 = H$.  Thus, the module Hom-algebra axiom \eqref{mha'} for $\rho^{1,0}$ says
\begin{equation}
\label{mharho10}
\rho^{1,0} \circ (\alpha_H^2 \otimes \mu_A) = \mu_A \circ (\rho^{1,0})^{\otimes 2} \circ (2~3) \circ (\Delta_H \otimes Id_A \otimes Id_A).
\end{equation}
This is true by the following commutative diagram, where we abbreviate $A^{\otimes 2}$ to $B$:
\[
\nicearrow
\xymatrix{
H \otimes B \ar[rr]^-{\Delta \otimes Id_B} \ar[ddrr]^-{\alpha_H \otimes Id_B} \ar[dddd]_-{\alpha_H^2 \otimes \mu_A} & & H^{\otimes 2} \otimes B \ar[rrr]^-{(\alpha_H \otimes Id_A)^{\otimes 2} \circ (2~3)} \ar[ddrr]^-{\alpha_H^{\otimes 2} \otimes Id_B} & & & H \otimes A \otimes H \otimes A \ar[dd]^-{\rho^{\otimes 2}}\\
& & & & & \\
& & H \otimes B \ar[rr]^-{\Delta \otimes Id_B} \ar[dd]^-{\alpha_H^2 \otimes \mu_A} & & H^{\otimes 2} \otimes B \ar[r]^-{\rho^{\otimes 2}\circ (2~3)} & B \ar[dd]^-{\mu_A}\\
& & & & & \\
H \otimes A \ar[rr]^-{\alpha_H \otimes Id_A} & & H \otimes A \ar[rrr]^-{\rho} & & & A.
}
\]
The composition along the left and then the bottom edges is the left-hand side in \eqref{mharho10}.  The composition along the top and then the right edges is the right-hand side in \eqref{mharho10}.  The left trapezoid is commutative because both compositions are equal to $\alpha_H^3 \otimes \mu_A$.  The top parallelogram is commutative by the comultiplicativity in $H$ \eqref{homcoassaxioms}.  The top right trapezoid is commutative by definition.  The bottom right rectangle is commutative by the $H$-module Hom-algebra axiom of $A$ \eqref{mha'}.

For the case $\rho^{0,1} = \alpha_A \circ \rho$, the $H^1$-module Hom-algebra axiom for $A^1$ \eqref{mha'} says
\begin{equation}
\label{mharho01}
\rho^{0,1} \circ (\alpha_H^4 \otimes \mu_A^{(1)}) = \mu_A^{(1)} \circ (\rho^{0,1})^{\otimes 2} \circ (2~3) \circ (\Delta_H^{(1)} \otimes Id_A \otimes Id_A).
\end{equation}
With $B = A^{\otimes 2}$, the following commutative diagram proves \eqref{mharho01}:
\[
\nicearrow
\xymatrix{
H \otimes B \ar[rr]^-{(\Delta \circ \alpha_H) \otimes Id_B} \ar[ddrr]^-{\alpha_H^3 \otimes \mu_A} \ar[dd]_-{Id_H \otimes \mu_A} & & H^{\otimes 2} \otimes B \ar[rr]^-{\rho^{\otimes 2} \circ (2~3)} & & B \ar[rr]^-{\alpha_A^{\otimes 2}} \ar[dd]^-{\mu_A} & & B \ar[dd]^-{\mu_A}\\
& & & & & & \\
H \otimes A \ar[dd]_-{\alpha_H^4 \otimes \alpha_A} & & H \otimes A \ar[rr]^-{\rho} \ar[ddll]^-{\alpha_H \otimes \alpha_A} & & A \ar[rr]^-{\alpha_A} \ar[dd]^-{\alpha_A} & & A \ar[dd]^-{\alpha_A}\\
& & & & & & \\
H \otimes A \ar[rrrr]^-{\rho} & & & & A \ar[rr]^-{\alpha_A} & & A.
}
\]
The composition along the left and then the bottom edges is the left-hand side in \eqref{mharho01}.  The composition along the top and then the right edges is the right-hand side in \eqref{mharho01}.  The left triangle is commutative because both compositions are equal to $\alpha_H^4 \otimes (\alpha_A \circ \mu_A)$.  The top trapezoid is commutative by the $H$-module Hom-algebra axiom \eqref{mha'} of $A$.  The bottom trapezoid is commutative by the multiplicativity of the $H$-module structure map $\rho$ \eqref{eq:moduleaxiom}.  The top right square is commutative by the multiplicativity in $A$ \eqref{homassaxioms}.
\end{proof}

Next we have the following version of the first Twisting Principle \ref{twistingprinciple} for module Hom-algebras.

\begin{theorem}
\label{thm:twistma}
Let $H = (H,\mu_H,\Delta_H)$ be a bialgebra, $A = (A,\mu_A)$ be an associative algebra, and $\rho \colon H \otimes A \to A$ be an $H$-module-algebra structure on $A$.  Suppose $\alpha_H \colon H \to H$ is a bialgebra morphism and $\alpha_A \colon A \to A$ is an algebra morphism such that
\begin{equation}
\label{alpharhomodalg}
\alpha_A \circ \rho = \rho \circ (\alpha_H \otimes \alpha_A).
\end{equation}
For any integers $n, k \geq 0$, define the map
\[
\rhoalpha^{n,k} = \alpha_A^{2^k} \circ \rho \circ (\alpha_H^n \otimes Id_A) \colon H \otimes A \to A.
\]
Then each $\rhoalpha^{n,k}$ gives $A_\beta$ the structure of an $H_\gamma$-module Hom-algebra, where $\beta = \alpha_A^{2^k}$, $\gamma = \alpha_H^{2^k}$, $A_\beta$ is the Hom-associative algebra $(A,\mu_\beta = \beta \circ \mu_A,\beta)$, and $H_\gamma$ is the Hom-bialgebra $(H,\mu_\gamma = \gamma \circ \mu_H, \Delta_\gamma = \Delta_H \circ \gamma,\gamma)$ in Theorem \ref{thm:hombialg}.
\end{theorem}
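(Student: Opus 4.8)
The plan is to follow the template of Theorem \ref{thm:twistmodule}: reduce the general $(n,k)$ statement to the single base case $n=k=0$ by invoking the second Twisting Principle for module Hom-algebras (Theorem \ref{thm:twistmha}), and then verify that base case directly.

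\emph{Reduction.} Suppose it has been shown that $\rhoalpha^{0,0} = \alpha_A \circ \rho$ makes $A_{\alpha_A} = (A,\alpha_A \circ \mu_A,\alpha_A)$ into an $H_{\alpha_H}$-module Hom-algebra, where $H_{\alpha_H} = (H,\alpha_H \circ \mu_H,\Delta_H \circ \alpha_H,\alpha_H)$ is the Hom-bialgebra of Theorem \ref{thm:hombialg}. Applying Theorem \ref{thm:twistmha} to this module Hom-algebra, whose constituent twisting maps are $\alpha_A$ and $\alpha_H$, produces for all $n,k \geq 0$ the structure map
\[
(\rhoalpha^{0,0})^{n,k} = \alpha_A^{2^k-1} \circ \rhoalpha^{0,0} \circ (\alpha_H^n \otimes Id_A) = \alpha_A^{2^k} \circ \rho \circ (\alpha_H^n \otimes Id_A) = \rhoalpha^{n,k},
\]
which is exactly the map in the statement. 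By Remark \ref{rk:derivedhomalg} the $k$th derived objects are $(A_{\alpha_A})^k = A_{\alpha_A^{2^k}} = A_\beta$ and $(H_{\alpha_H})^k = H_{\alpha_H^{2^k}} = H_\gamma$, so Theorem \ref{thm:twistmha} yields precisely the asserted $H_\gamma$-module Hom-algebra structure on $A_\beta$. Everything therefore reduces to the case $n=k=0$.

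\emph{Base case.} Two things must be checked. First, $\rhoalpha^{0,0}$ is an $H_{\alpha_H}$-module structure on the Hom-module $(A,\alpha_A)$; this is immediate from Theorem \ref{thm:twistmodule} applied with the bialgebra $H$ in the role of the associative algebra and $A$ in the role of the module, since the hypothesis \eqref{alphamrho} of that theorem is precisely our compatibility condition \eqref{alpharhomodalg}. Second, the module Hom-algebra axiom \eqref{mha'}, which here reads
\[
\rhoalpha^{0,0} \circ (\alpha_H^2 \otimes \mu_{\alpha_A}) = \mu_{\alpha_A} \circ (\rhoalpha^{0,0})^{\otimes 2} \circ (2~3) \circ (\Delta_{\alpha_H} \otimes Id_A \otimes Id_A),
\]
must be verified. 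Writing the original action as $xa = \rho(x,a)$ and the original product by juxtaposition, the left-hand side sends $x \otimes a \otimes b$ to $\alpha_A\big(\rho(\alpha_H^2(x),\alpha_A(ab))\big)$; one application of \eqref{alpharhomodalg} rewrites this as $\alpha_A^2\big(\alpha_H(x)(ab)\big)$. The original module-algebra axiom \eqref{modalgaxiom} applied to $\alpha_H(x)$, together with $\Delta_H(\alpha_H(x)) = \sum \alpha_H(x_1) \otimes \alpha_H(x_2)$ (valid because $\alpha_H$ is a coalgebra morphism), then turns this into $\sum \alpha_A^2\big((\alpha_H(x_1)a)(\alpha_H(x_2)b)\big)$. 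Expanding the right-hand side the same way and repeatedly using that $\alpha_A$ is an algebra morphism to combine the twistings produces the identical expression, so the axiom holds.

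The main obstacle is the bookkeeping in this last verification: one must match the single factor $\alpha_H^2$ on the $H$-slot of the left-hand side against the two separate compositions with $\alpha_A$ coming from $\rhoalpha^{0,0}$ and from $\mu_{\alpha_A}$, and must invoke the coalgebra-morphism property of $\alpha_H$ at exactly the moment the untwisted axiom \eqref{modalgaxiom} is applied. Once all powers of $\alpha_H$ and $\alpha_A$ are aligned, the identity collapses to \eqref{modalgaxiom}, so no essentially new difficulty arises; alternatively the whole base case can be packaged as a commutative-diagram chase in the style of the proof of Theorem \ref{thm:twistmha}.
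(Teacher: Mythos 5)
Your proposal is correct and follows essentially the same route as the paper: reduce to the case $n=k=0$ via Theorem \ref{thm:twistmha} together with the identity $(\rhoalpha^{0,0})^{n,k}=\rhoalpha^{n,k}$, invoke Theorem \ref{thm:twistmodule} for the underlying module structure, and then verify the module Hom-algebra axiom for $\rhoalpha^{0,0}=\alpha_A\circ\rho$ using \eqref{alpharhomodalg}, the untwisted axiom \eqref{modalgaxiom}, the coalgebra-morphism property of $\alpha_H$, and the algebra-morphism property of $\alpha_A$. The only cosmetic difference is that you carry out this last verification on elements, whereas the paper packages the identical steps into a commutative diagram.
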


\begin{proof}
Using Theorem \ref{thm:twistmha}, it suffices to prove the case $\rhoalpha = \rhoalpha^{0,0}$.  Indeed, the computation \eqref{rhoalphank} shows that $(\rhoalpha^{0,0})^{n,k} = \rhoalpha^{n,k}$.  Moreover, Theorem \ref{thm:twistmodule} already tells us that $A_\beta$ is an $H_\gamma$-module with structure map $\rhoalpha^{n,k}$.  Therefore, it suffices to prove the module Hom-algebra axiom \eqref{mha'} for $\rhoalpha = \rhoalpha^{0,0} = \alpha_A \circ \rho$, which says
\begin{equation}
\label{mharhoalpha}
\rhoalpha \circ (\alpha_H^2 \otimes \mu_\beta) = \mu_\beta \circ \rhoalpha^{\otimes 2} \circ (2~3) \circ (\Delta_\gamma \otimes Id_A \otimes Id_A)
\end{equation}
with $\mu_\beta = \alpha_A \circ \mu_A$ and $\Delta_\gamma = \Delta_H \circ \alpha_H$.  With $B = A^{\otimes 2}$, \eqref{mharhoalpha} is true by the following commutative diagram:
\[
\nicearrow
\xymatrix{
H \otimes B  \ar[rr]^-{\alpha_H \otimes Id_B} \ar[dddd]_-{\alpha_H^2 \otimes \mu_\beta} & & H \otimes B \ar[rr]^-{\chi} \ar[dd]^-{Id_H \otimes \mu_A} & & B \ar[rr]^-{\alpha_A^{\otimes 2}} \ar[dd]^-{\mu_A} & & B \ar[dd]^-{\mu_A}\\
& & & & & & \\
& & H \otimes A \ar[rr]^-{\rho} \ar[ddll]_-{\alpha_H \otimes \alpha_A} & & A \ar[rr]^-{\alpha_A} \ar[dd]^-{\alpha_A} & & A \ar[dd]^-{\alpha_A}\\
& & & & & & \\
H \otimes A \ar[rrrr]^-{\rho} & & & & A \ar[rr]^-{\alpha_A} & & A.
}
\]
Here $\chi = \rho^{\otimes 2} \circ (2~3) \circ (\Delta_H \otimes Id_A \otimes Id_A)$.  The composition along the left and then the bottom edges is the left-hand side in \eqref{mharhoalpha}.  The composition along the top and then the right edges is the right-hand side in \eqref{mharhoalpha}.  The left trapezoid is commutative by definition.  The bottom trapezoid is commutative by the assumption \eqref{alpharhomodalg}.  The top middle square is commutative by the module-algebra axiom \eqref{modalgaxiom}. The top right square is commutative by the assumption that $\alpha_A$ is an algebra morphism.
\end{proof}

To use Theorem \ref{thm:twistma}, we need to check the condition \eqref{alpharhomodalg}.  Using Proposition \ref{prop:twistmodule}, it suffices to check \eqref{alpharhomodalg} on a set of algebra generators of $H$ and a linear basis of $A$.  As an illustration of Theorem \ref{thm:twistma}, let us consider Hom-quantum geometry on the Hom-quantum planes.  We begin with Hom-type analogs of a well-known $\uq$-module-algebra structure on the quantum plane.

\begin{example}[\textbf{Hom-quantum geometry on the Hom-quantum planes, I}]
\label{ex:hqg}
Here we work over the ground field $\bC$.  Assume $q \in \bC$ is not a root of unity.  In this example, we construct classes of $\uq_\al$-module Hom-algebra structures on the Hom-quantum planes $\qpalpha$ by applying Theorem \ref{thm:twistma} to a standard $\uq$-module-algebra structure on the quantum plane $\qp$.  Recall the quantum enveloping algebra $\uq$ and the Hom-quantum enveloping algebra $\uq_\al$ from Example \ref{ex:uqg}, where $\al \colon \uq \to \uq$ is defined by
\begin{equation}
\label{aluqsl2}
\al(K^{\pm 1}) = K^{\pm 1}, \quad
\al(E) = \lambda E, \quad\text{and}\quad
\al(F) = \lambda^{-1}F
\end{equation}
with $\lambda$ any fixed non-zero scalar in $\bC$.  Also recall from Example \ref{ex:hqspace} the quantum plane $\qp = \bk\{x,y\}/(yx-qxy)$ and the Hom-quantum plane $\qpalpha$, where in this example the map $\alpha \colon \qp \to \qp$ is defined by
\begin{equation}
\label{alphaxiqp}
\alpha(x) = \xi x \quad\text{and}\quad
\alpha(y) = \xi \lambda^{-1}y
\end{equation}
with $\xi \in \bC$ any fixed scalar.

Let us first recall the relevant $\uq$-module-algebra structure on the quantum plane $\qp$.  The references here are \cite{kassel} (VII.3) and \cite{msmith}.  This $\uq$-module is defined using the following quantum partial derivatives.  For a monomial $x^my^n \in \qp$, define
\begin{equation}
\label{qpartial}
\dqx(x^my^n) = [m]_q x^{m-1}y^n \quad\text{and}\quad
\dqy(x^my^n) = [n]_qx^my^{n-1},
\end{equation}
where $[n]_q$ is the $q$-integer defined in \eqref{qinteger}.  These quantum partial derivatives extend linearly to all of $\qp$.  Also, for $P = P(x,y) \in \qp$, define
\[
\sigmax(P) = P(qx,y) \quad\text{and}\quad
\sigmay(P) = P(x,qy).
\]
Then there is a $\uq$-module-algebra structure $\rho \colon \uq \otimes \qp \to \qp$ on the quantum plane $\qp$ determined on the algebra generators by
\begin{equation}
\label{uqaction}
\begin{split}
EP &= x (\dqy P), \quad FP = (\dqx P)y,\\
KP &= \sigmax \sigmay^{-1}(P) = P(qx,q^{-1}y), \quad\text{and}\\
K^{-1}P &= \sigmay \sigmax^{-1}(P) = (q^{-1}x,qy).
\end{split}
\end{equation}
We claim that Theorem \ref{thm:twistma} can be applied here, i.e., that the condition
\begin{equation}
\label{alpharhoqp}
\alpha \circ \rho = \rho \circ (\al \otimes \alpha)
\end{equation}
hold.

Using Proposition \ref{prop:twistmodule}, it suffices to check \eqref{alpharhoqp} for the algebra generators $\{E,F,K^{\pm 1}\}$ of $\uq$ and for the monomials $x^my^n \in \qp$.  For the generator $E \in \uq$, we have
\[
E(x^my^n) = x([n]_q x^my^{n-1}) = [n]_q x^{m+1} y^{n-1}.
\]
Therefore, we have
\[
\begin{split}
\alpha(E(x^my^n)) &= [n]_q \xi^{m+n}\lambda^{-(n-1)} x^{m+1} y^{n-1}\\
&= (\lambda E)(\xi^{m+n} \lambda^{-n} x^my^n)\\
&= \al(E)\alpha(x^my^n).
\end{split}
\]
Likewise, for the generator $F \in \uq$, we have
\[
\begin{split}
\alpha(F(x^my^n)) &= \alpha([m]_q x^{m-1}y^{n+1})\\
&= [m]_q \xi^{m+n} \lambda^{-(n+1)} x^{m-1}y^{n+1}\\
&= (\lambda^{-1}F)(\xi^{m+n} \lambda^{-n} x^my^n)\\
&= \al(F)\alpha(x^my^n).
\end{split}
\]
Finally, for the generators $K^{\pm 1} \in \uq$ and $P = P(x,y) \in \qp$, we have
\[
\begin{split}
\alpha(K^{\pm 1}P) &= \alpha(P(q^{\pm 1}x,q^{\mp 1}y))\\
&= P(q^{\pm 1}\xi x,q^{\mp 1}\xi \lambda^{-1}y)\\
&= K^{\pm 1}P(\xi x,\xi \lambda^{-1}y)\\
&= \al(K^{\pm 1})\alpha(P).
\end{split}
\]
We have proved \eqref{alpharhoqp}.

By Theorem \ref{thm:twistma}, for any integers $l, k \geq 0$, the map
\[
\rhoalpha^{l,k} = \alpha^{2^k} \circ \rho \circ (\al^l \otimes Id) \colon \uq \otimes \qp \to \qp
\]
gives the Hom-quantum plane $(\qp)_\beta$ the structure of a $\uq_\gamma$-module Hom-algebra, where $\beta = \alpha^{2^k}$ and $\gamma = \al^{2^k}$.  More precisely, we have
\[
\begin{split}
\rhoalpha^{l,k}(E,x^my^n) &= \lambda^l \alpha^{2^k}(E(x^my^n))\\
&= \lambda^l \alpha^{2^k}([n]_q x^{m+1}y^{n-1})\\
&= [n]_q \xi^{2^k(m+n)} \lambda^{l-2^k(n-1)} x^{m+1}y^{n-1}.
\end{split}
\]
Similar computations show that
\[
\rhoalpha^{l,k}(F,x^my^n) = [m]_q \xi^{2^k(m+n)}\lambda^{-l-2^k(n+1)} x^{m-1}y^{n+1}
\]
and, for $P = P(x,y) \in \qp$,
\[
\rhoalpha^{l,k}(K^{\pm 1},P) = P(q^{\pm 1}\xi^{2^k}x, q^{\mp 1}(\xi\lambda^{-1})^{2^k}y).
\]
In summary, we have constructed an uncountable, four-parameter ($\lambda \in \bC \setminus \{0\}$, $\xi \in \bC$, $l,k \geq 0$) family of $\uq_\gamma$-module Hom-algebra structures on the Hom-quantum planes $(\qp)_\beta$.  We recover the original $\uq$-module-algebra structure $\rho$ \eqref{uqaction} on the quantum plane $\qp$ by taking $\xi = \lambda = 1$.
\qed
\end{example}

\begin{example}[\textbf{Hom-quantum geometry on the Hom-quantum planes, II}]
\label{ex2:hqg}
The $\uq$-module-algebra structure on the quantum plane \eqref{uqaction} considered in Example \ref{ex:hqg} is not the only one.  In this example, we consider Hom-type analogs of one such non-standard $\uq$-module-algebra structure on $\qp$.  The reader is referred to \cite{ds} for a complete classification of $\uq$-module-algebra structures on the quantum plane.  Following the setting of \cite{ds}, we assume that $0 < q < 1$.

According to \cite{ds} (Theorem 9 and Proposition 18), there is a $\uq$-module-algebra structure $\rho \colon \uq \otimes \qp \to \qp$ on $\qp$ determined by
\begin{equation}
\label{uqaction'}
\begin{split}
K^{\pm 1}(x^my^n) &= q^{\pm(m-2n)}x^my^n,\\
E(x^my^n) &= q^{1-n}[n]_q x^m y^{n-1},\quad\text{and}\\
F(x^my^n) &= q^{-m}\frac{q^{2m} - q^{2n}}{q-q^{-1}}x^m y^{n+1},
\end{split}
\end{equation}
where $[n]_q$ is defined in \eqref{qinteger}.  Note that $Ex^m = 0$ because $[0]_q = 0$ and that $F(x^m y^n) = 0$ if and only if $m=n$.

Suppose $\al \colon \uq \to \uq$ is defined as in \eqref{aluqsl2} with $\lambda \in \bC \setminus \{0\}$.  Also let $\alpha \colon \qp \to \qp$ be defined as
\[
\alpha(x) = x \quad \text{and} \quad \alpha(y) = \lambda^{-1}y,
\]
which is \eqref{alphaxiqp} with $\xi = 1$.  We claim that Theorem \ref{thm:twistma} can be applied here, i.e., that the condition
\[
\alpha \circ \rho = \rho \circ (\al \otimes \alpha)
\]
hold.  Using Proposition \ref{prop:twistmodule}, it suffices to check this condition for the algebra generators $\{E,F,K^{\pm 1}\}$ of $\uq$ and for the monomials $x^my^n \in \qp$.  This is obviously true for the generators $K^{\pm 1}$.  For the generator $E$, we have
\[
\begin{split}
\alpha(E(x^my^n)) &= q^{1-n}[n]_q \alpha(x^m y^{n-1})\\
&= q^{1-n}[n]_q\lambda^{-(n-1)} x^m y^{n-1}\\
&= (\lambda E)(\lambda^{-n}x^m y^n)\\
&= \al(E)\alpha(x^my^n).
\end{split}
\]
Likewise, for the generator $F$, we have
\[
\begin{split}
\alpha(F(x^my^n)) &= q^{-m}\frac{q^{2m} - q^{2n}}{q-q^{-1}} \alpha(x^m y^{n+1})\\
&= q^{-m}\frac{q^{2m} - q^{2n}}{q-q^{-1}}\lambda^{-(n+1)} x^m y^{n+1}\\
&= (\lambda^{-1}F)(\lambda^{-n} x^m y^n)\\
&= \al(F)\alpha(x^my^n),
\end{split}
\]
as desired.  Note that $\xi = 1$ in $\alpha$ is necessary for the above computations involving $E$ and $F$.

By Theorem \ref{thm:twistma}, for any integers $l, k \geq 0$, the map
\[
\rhoalpha^{l,k} = \alpha^{2^k} \circ \rho \circ (\al^l \otimes Id) \colon \uq \otimes \qp \to \qp
\]
gives the Hom-quantum plane $(\qp)_\beta$ the structure of a $\uq_\gamma$-module Hom-algebra, where $\beta = \alpha^{2^k}$ and $\gamma = \al^{2^k}$.  More precisely, we have
\[
\begin{split}
\rhoalpha^{l,k}(K^{\pm 1},x^m y^n)
&= q^{\pm(m-2n)}\alpha^{2^k}(x^m y^n)\\
&= q^{\pm(m-2n)} \lambda^{-2^kn} x^m y^n,
\end{split}
\]
\[
\begin{split}
\rhoalpha^{l,k}(E,x^my^n)
&= \lambda^l \alpha^{2^k}(E(x^my^n))\\
&= q^{1-n}[n]_q \lambda^{l-2^k(n-1)} x^m y^{n-1},
\end{split}
\]
and
\[
\begin{split}
\rhoalpha^{l,k}(F,x^my^n)
&= \lambda^{-l}\alpha^{2^k}(F(x^my^n))\\
&= q^{-m}\frac{q^{2m} - q^{2n}}{q-q^{-1}} \lambda^{-l-2^k(n+1)} x^m y^{n+1}.
\end{split}
\]
In summary, we have constructed an uncountable, three-parameter ($\lambda \in \bC \setminus \{0\}$, $l,k \geq 0$) family of $\uq_\gamma$-module Hom-algebra structures on the Hom-quantum planes $(\qp)_\beta$.  We recover the original $\uq$-module-algebra structure $\rho$ \eqref{uqaction'} on the quantum plane $\qp$ by taking $\lambda = 1$.
\qed
\end{example}


\end{document}